\newtheorem{proposition}{Proposition}[section]
\newtheorem{lemma}[proposition]{Lemma}
\newtheorem{corollary}[proposition]{Corollary}
\newtheorem{definition}[proposition]{Definition}
\newtheorem{remark}[proposition]{Remark}
\newtheorem{observation}[proposition]{Observation}
\newcommand{\supp}{\textnormal{supp}}
\newcommand{\uno}{\mathds{1}}
\newcommand{\lip}{\textnormal{Lip}(S^1)}
\newcommand{\Lip}{\textnormal{Lip}(S^{n-1})}
\newcommand{\norm}[1]{\|{#1}\|_{\textnormal{Lip}}}
\newcommand{\LipM}{\textnormal{Lip}_M(S^{n-1})}
\begin{document}

\title[Valuations on $\Lip$]{Continuous valuations on the space of Lipschitz functions on the sphere}

\author[A. Colesanti]{Andrea Colesanti}
\address{Dipartimento di Matematica e Informatica ``U. Dini" Universit\`a degli studi di Firenze, Viale Morgagni 67/A - 50134, Firenze, Italy}
\email{andrea.colesanti@unifi.it}

\author[D. Pagnini]{Daniele Pagnini}
\address{Dipartimento di Matematica e Informatica ``U. Dini" Universit\`a degli studi di Firenze, Viale Morgagni 67/A - 50134, Firenze, Italy}
\email{daniele.pagnini@unifi.it}

\author[P. Tradacete]{Pedro Tradacete}
\address{Instituto de Ciencias Matem\'aticas (CSIC-UAM-UC3M-UCM)\\
Consejo Superior de Investigaciones Cient\'ificas\\
C/ Nicol\'as Cabrera, 13--15, Campus de Cantoblanco UAM\\
28049 Madrid, Spain.}
\email{pedro.tradacete@icmat.es}

\author[I. Villanueva]{Ignacio Villanueva}
\address{Universidad Complutense de Madrid, Departamento de An\'alisis Matem\'atico y Matem\'atica Aplicada. Instituto de Matem\'atica Interdisciplinar-IMI. Instituto de Ciencias Matem\'aticas ICMAT, Madrid, Spain.}
\email{ignaciov@ucm.es}

\begin{abstract}
We study real-valued valuations on the space of Lipschitz functions over the Euclidean unit sphere $S^{n-1}$. After introducing an appropriate notion of convergence, we show that continuous valuations are bounded on sets which are bounded with respect to the Lipschitz norm. This fact, in combination with measure theoretical arguments, will yield an integral representation for continuous and rotation invariant valuations on the space of Lipschitz functions over the 1-dimensional sphere.
\end{abstract}

\subjclass[2010]{52B45, 26A16, 28A12}

\keywords{Geometric valuation theory, Lipschitz functions, integral representation}

\maketitle

\tableofcontents

\section{Introduction and preliminaries}

A (real-valued) \textit{valuation} on a class $ \mathcal{F} $ of sets is a function $ V:\mathcal{F}\longrightarrow
\mathbb{R} $ such that
$$
V(A\cup B)+V(A\cap B)=V(A)+V(B),
$$
for every $ A,B\in\mathcal{F} $ with $ A\cup B,A\cap B\in\mathcal{F} $. Apparently, the first results concerning valuations arise in the context of convex polytopes and M. Dehn's solution of Hilbert's third problem in 1901. Valuations defined on the class $\mathcal K^n$ of convex bodies of $\mathbb R^n$ have been particularly relevant in convex geometry, one of the cornerstones being Hadwiger's characterization theorem for continuous and rigid motion invariant valuations (see \cite{Hadwiger}, and also \cite{Alesker} for more recent developments). We refer to the monographs \cite{KR, Schneider} for an up-to-date account of this theory.

In recent years, Geometric Valuation Theory has seen a considerable growth, partly motivated by the study of valuations defined in a function space setting: if $X$ is a space of functions, then a (real-valued) \textit{valuation} on $X$ is a mapping $V:X\longrightarrow\mathbb{R}$
such that
$$
V(f\vee g)+V(f\wedge g)=V(f)+V(g),
$$
for every $f,g\in X $ with $ f\vee g,f\wedge g\in X $, where $ \vee $ and $ \wedge $ respectively denote the pointwise maximum and pointwise minimum.

In particular, several characterization results have been provided for valuations on different function spaces, including, for instance, spaces of convex or quasi-concave functions \cite{Cavallina-Colesanti, CLP, CLM, Mussnig}, Lebesgue $L^p$ spaces \cite{Klain96, Klain97, TV:20, Tsang, Tsang:12}, spaces of continuous functions \cite{TV:17, TV:18, Villanueva} and Sobolev spaces \cite{Ludwig:11b, Ludwig:12, Ma} (see also the survey paper \cite{Ludwig:11} for more results of this type). Among other things, what these results are implying is that the connections between convex geometry and functional analysis are far from being exhausted. As a matter of fact, a structure theory of valuations on Banach lattices can be developed (the last two named authors have initiated it in \cite{TV:20}).

In this paper, our analysis will focus on continuous and rotation invariant valuations defined on the space of Lipschitz functions over the Euclidean sphere $S^{n-1}$.

Let $\Lip$ be the space of real-valued Lipschitz continuous maps defined on $ S^{n-1} $, i.e., the set of
functions $ f:S^{n-1}\longrightarrow\mathbb{R} $ for which the \textit{Lipschitz constant}
$$
L(f)=\sup\left\{\frac{ \vert f(x)-f(y)\vert}{\|x-y\|}: x,y\in S^{n-1},\, x\neq y\right\}
$$
is finite, where $ \|\cdot\| $ denotes the Euclidean norm on $ \mathbb{R}^n $. The space $\Lip$ endowed with the pointwise ordering and the norm
$$
\norm{f}=\max\left\{\|f\|_\infty,L(f)\right\},
$$
is a complete normed lattice (cf. \cite[Proposition 1.6.2]{Weaver}) satisfying
$$
L(f\wedge g), L(f\vee g)\leq \max\{L(f),L(g)\}.
$$
However, it follows immediately from the definition that the space $\Lip$ is not a Banach lattice, as the norm is not monotone on the positive cone, so the results given in \cite{TV:20} for valuations on Banach lattices are not directly available in this context.

Recall that every valuation on a lattice of real-valued functions, and in particular every valuation on $ \Lip $,
satisfies the Inclusion-Exclusion Principle, which can be proved by induction.
\begin{proposition}[Inclusion-Exclusion Principle]\label{inclusion-exclusion}
Let $ V:\Lip\longrightarrow\mathbb{R} $ be a valuation. Then
\begin{eqnarray*}
V\left(\bigvee_{j=1}^N f_j\right)&=&\sum_{1\leq j\leq N}V(f_j)-\sum_{1\leq j_1<j_2\leq N}
V(f_{j_1}\wedge f_{j_2})+\\
&&+\sum_{1\leq j_1<j_2<j_3\leq N}V(f_{j_1}\wedge f_{j_2}\wedge f_{j_3})-\ldots+(-1)^{N-1}
V\left(\bigwedge_{j=1}^N f_j\right),
\end{eqnarray*}
for every $ f_1,\ldots,f_N\in\Lip $. The same holds exchanging the roles of $ \vee $ and $ \wedge $.
\end{proposition}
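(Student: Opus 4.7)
The plan is to argue by induction on $N$, with the case $N=2$ being exactly the defining identity of a valuation. For the inductive step, set $g = \bigvee_{j=1}^{N-1} f_j$, which lies in $\Lip$ because $\Lip$ is closed under finite joins, and apply the valuation axiom to the pair $g, f_N$ to obtain
\[
V\Bigl(\bigvee_{j=1}^{N} f_j\Bigr) = V(g) + V(f_N) - V(g \wedge f_N).
\]

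Since $\vee$ and $\wedge$ on $\Lip$ are pointwise maxima and minima, the lattice is distributive, so
\[
g \wedge f_N \;=\; \Bigl(\bigvee_{j=1}^{N-1} f_j\Bigr) \wedge f_N \;=\; \bigvee_{j=1}^{N-1} (f_j \wedge f_N).
\]
Both $V(g)$ and $V(g \wedge f_N)$ are now values of $V$ at a join of $N-1$ Lipschitz functions (namely $f_1,\dots,f_{N-1}$ in the first case and $f_1\wedge f_N,\dots,f_{N-1}\wedge f_N$ in the second), so the inductive hypothesis applies and expands each of them into an alternating sum indexed by non-empty subsets of $\{1,\dots,N-1\}$.

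What remains is bookkeeping: one classifies each non-empty $T \subseteq \{1,\dots,N\}$ according to whether $N \notin T$, $T = \{N\}$, or $N \in T$ and $|T|\ge 2$. Terms of the first type come only from the expansion of $V(g)$ and appear with sign $(-1)^{|T|-1}$; the term $V(f_N)$ is precisely the $T=\{N\}$ contribution; and for $T = S \cup \{N\}$ with $\emptyset \neq S \subseteq \{1,\dots,N-1\}$, the expansion of $-V(g \wedge f_N)$ contributes $-(-1)^{|S|-1} V(\bigwedge_{j \in T} f_j) = (-1)^{|T|-1} V(\bigwedge_{j \in T} f_j)$, since $|T| = |S|+1$. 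This is exactly the claimed formula.

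The main obstacle is purely combinatorial and is handled by the three-case split above; the only non-routine input is the distributivity of $\wedge$ over $\vee$, which is immediate from pointwise evaluation. Finally, swapping $\vee$ and $\wedge$ throughout leaves every step (valuation axiom, distributivity, induction) intact, so the dual identity follows by the same argument.
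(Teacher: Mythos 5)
Your proof is correct and follows exactly the inductive strategy that the paper itself indicates (it states the result "can be proved by induction" without writing out the details). The key ingredients you use — the valuation axiom applied to $g=\bigvee_{j<N}f_j$ and $f_N$, distributivity of $\wedge$ over $\vee$ for pointwise lattice operations, the identification $\bigwedge_{j\in S}(f_j\wedge f_N)=\bigwedge_{j\in S\cup\{N\}}f_j$, and the three-way bookkeeping over subsets of $\{1,\dots,N\}$ — are precisely what is needed, and the dual statement follows by symmetry as you note.
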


Let us say that a valuation $ V:\Lip\longrightarrow\mathbb{R} $ is \textit{rotation invariant} if for every $f\in\Lip $
and $ \varphi\in\mathbf{O}(n) $ we have
$$V(f\circ\varphi)=V(f), $$
where $ \mathbf{O}(n) $ is the orthogonal group acting on $S^{n-1}$.

In this paper we will only consider valuations which are rotation invariant and continuous with respect to a certain notion of convergence, called $ \tau$-convergence, on $\Lip$ (see Section \ref{section:taucontinuous} for the definition). In \cite{CPTV}, a characterization result has been given for those valuations which are in addition invariant under linear perturbations. We should point out that the results in \cite{CPTV} depend heavily on classical structural results for valuations on convex bodies such as Hadwiger's theorem and McMullen's decomposition. In this paper, our techniques will be based upon measure theoretical arguments instead.

Our goal is to understand whether continuous and rotation invariant valuations on $\Lip$ admit an integral representation. Recall that, by Rademacher's theorem, Lipschitz functions on $S^{n-1}$ are differentiable a.e. (short for ``almost everywhere'') with respect to the Hausdorff measure $H^{n-1}$, which we normalize so that $ H^{n-1}(S^{n-1})=1 $. Given $f\in\Lip$, we denote by $\nabla f(x)$ the spherical gradient of $f$ at $x\in S^{n-1}$, when it exists; for the definition of spherical gradient, see \cite[formula (2.1)]{CPTV}. It is easy to check that, given a continuous function $K:\mathbb R\times\mathbb R_+\longrightarrow \mathbb R$, where $ \mathbb{R}_+=[0,+\infty) $, we can define a continuous, rotation invariant valuation $V_K:\Lip\longrightarrow \mathbb R$ by setting
\begin{equation}\label{eq:V_K}
V_K(f)=\int_{S^{n-1}} K(f(x),\|\nabla f(x)\|) dH^{n-1}(x),
\end{equation}
for $ f\in\Lip $. The main question here is to find out whether every valuation has this form. We believe this is the case for continuous and rotation invariant valuations, at least in the one-dimensional setting, that is, on $\lip$. However, our techniques so far have only yielded a representation of the form \eqref{eq:V_K} on a dense subspace of $\lip$.

Before we state our main result, let us identify Lipschitz continuous functions on $ S^1 $ with Lipschitz continuous functions $ f $ on $ [0,2\pi] $ such that $ f(0)=f(2\pi) $, and denote by $\mathcal{L}(S^1)$ the
set of piecewise linear functions on $ S^1 $. This identification allows us to work with the standard derivative instead of the spherical gradient. On the basis of \cite[Lemma 3.6]{CPTV}, it can be checked that
$\mathcal{L}(S^1)$ is a $\tau$-dense subspace of $\lip$. We have the following theorem.

\begin{restatable}{theorem}{main}\label{main result}
Let $ V:\lip\longrightarrow\mathbb{R} $ be a $ \tau$-continuous and rotation invariant valuation.
Then there exists $ K:\mathbb{R}\times\mathbb{R}_+\longrightarrow\mathbb{R} $ such that
$ K(\cdot,\gamma) $ is a Borel function for every $ \gamma\in\mathbb{R}_+ $ and, for every $f\in\mathcal L(S^1)$,
$$ V(f)=\int_0^{2\pi}K(f(t),\vert f'(t)\vert)dH^1(t). $$
In particular, for every $ f\in\lip $ and
$ \{f_i\}\subset\mathcal{L}(S^1) $ such that $ f_i\overset\tau \rightarrow f $,
$$ V(f)=\lim_{i\rightarrow\infty}\int_0^{2\pi}K(f_i(t),\vert f_i'(t)\vert)dH^1(t). $$
\end{restatable}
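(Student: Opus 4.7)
The plan is to produce the kernel $K$ by associating to $V$, for each pair $(c,\gamma)\in\mathbb{R}\times\mathbb{R}_+$, a finite signed Borel measure on $S^1$ that rotation invariance will force to be a multiple of $H^1$, and then to reassemble these local contributions via the Inclusion--Exclusion Principle of Proposition \ref{inclusion-exclusion} into an integral representation on $\mathcal{L}(S^1)$.

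Fix $(c,\gamma)$. For each arc $I\subset S^1$, identified with a subinterval of $[0,2\pi]$, I would introduce a symmetric triangular tent function $\phi_{I,c,\gamma}\in\mathcal{L}(S^1)$ equal to $c$ outside $I$ and with slope $\pm\gamma$ inside. For two disjoint arcs $I_1,I_2$ one checks the identities
$$\phi_{I_1,c,\gamma}\vee\phi_{I_2,c,\gamma}=\phi_{I_1\cup I_2,c,\gamma},\qquad \phi_{I_1,c,\gamma}\wedge\phi_{I_2,c,\gamma}\equiv c,$$
so that the valuation identity makes $I\mapsto\nu_{c,\gamma}(I):=V(\phi_{I,c,\gamma})-V(c)$ finitely additive on disjoint unions of arcs. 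The boundedness of $V$ on Lipschitz-bounded sets (established earlier in the paper) combined with the $\tau$-continuity of $V$ and the observation that $\phi_{I,c,\gamma}\to c$ in $\tau$ as $|I|\to 0$ then promotes $\nu_{c,\gamma}$ to a finite signed Borel measure.

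Rotation invariance of $V$ transfers to $\nu_{c,\gamma}$, which is therefore a constant multiple of $H^1$; denote the constant by $K(c,\gamma)$. Borel measurability of $c\mapsto K(c,\gamma)$ follows from the $\tau$-continuity of $V$ together with the $\tau$-continuity of the assignment $c\mapsto\phi_{I,c,\gamma}$. To obtain the integral formula for an arbitrary $f\in\mathcal{L}(S^1)$ with breakpoints $t_0<\cdots<t_N$ and slopes $\sigma_j$ on $[t_{j-1},t_j]$, I would decompose $f$ into affine building blocks by repeated use of Inclusion--Exclusion (possibly enlarging the family of tests to include asymmetric wedges that handle the sign pattern of the $\sigma_j$) and identify each local contribution through the corresponding measure, summing to yield
$$V(f)=\sum_{j=1}^N\int_{t_{j-1}}^{t_j}K(f(t),|\sigma_j|)\,dt=\int_0^{2\pi}K(f(t),|f'(t)|)\,dH^1(t).$$
The limit formula for general $f\in\lip$ follows from $\tau$-continuity of $V$ and the $\tau$-density of $\mathcal{L}(S^1)$ in $\lip$, via \cite[Lemma 3.6]{CPTV}.

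The main obstacle is bridging the gap between the measure identification $\nu_{c,\gamma}=K(c,\gamma)\,H^1$ and the pointwise nature of the integrand $K(f(t),|f'(t)|)$: the tent $\phi_{I,c,\gamma}$ does not take the value $c$ throughout $I$, so a delicate differentiation and limiting argument will be required to conclude that $K(c,\gamma)$ genuinely represents the infinitesimal contribution at a point where $f=c$ and $|f'|=\gamma$. In parallel, the lattice decomposition of a generic piecewise linear $f$ (with slopes of varying magnitude and sign) into elementary tents and their variants must be arranged so that the Inclusion--Exclusion expansion recovers the correct per-piece contribution; this is where the bulk of the bookkeeping is expected to lie.
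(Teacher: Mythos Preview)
Your construction of $\nu_{c,\gamma}$ does not work as stated. The identities you claim for disjoint arcs only hold when the closures of $I_1$ and $I_2$ are separated, and in that regime they merely say that $V$ applied to two far-apart tents splits as a sum; they tell you nothing about how $\nu_{c,\gamma}(I)$ depends on the length of a \emph{single} arc $I$. In particular, you never obtain additivity for \emph{adjacent} intervals, which is exactly what is needed to promote $\nu_{c,\gamma}$ to a Borel measure or to conclude it is a multiple of $H^1$. In fact the conclusion is false: take $V(f)=\int_0^{2\pi}K(f,|f'|)\,dH^1$ for a smooth $K$. Then
\[
\nu_{c,\gamma}(I)=2\int_0^{|I|/2}K(c+\gamma s,\gamma)\,ds-|I|\,K(c,0),
\]
which is linear in $|I|$ only when $K(\cdot,\gamma)$ is constant. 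So $\nu_{c,\gamma}$ is generically \emph{not} a multiple of $H^1$, and the scheme of reading off $K(c,\gamma)$ as its density collapses. The obstacle you flag at the end (``the tent does not take the value $c$ throughout $I$'') is not a technicality to be patched later; it is precisely what destroys additivity.

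The paper circumvents this by attaching the measure not to a pair $(c,\gamma)$ but to a fixed function $g\in\mathcal L(S^1)$: one sets $\nu_g((a,b])=V(g_{ab})$, where $g_{ab}$ is the truncation that equals $g$ on $(a,b]$ (and its mirror) and is constant elsewhere. The crucial point is that these truncations satisfy the lattice relations $g_{ab}\vee g_{bc}=g_{ac}\vee g(b)$ and $g_{ab}\wedge g_{bc}=g_{ac}\wedge g(b)$, which \emph{do} give additivity on adjacent intervals. The resulting $\nu_g$ is not rotation invariant, so one cannot invoke the ``rotation invariant $\Rightarrow$ multiple of $H^1$'' shortcut; instead the paper builds separate rotation-invariant control measures $\mu_{\lambda,\gamma}$, uses them to prove $|\nu_g|\le C_g\,H^1$, and then extracts the kernel from the Radon--Nikodym derivative $D_g$ via a saw-function construction and Lebesgue differentiation. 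That last step is where the value $K(g(t),|g'(t)|)$ is actually pinned down pointwise, and it requires the preliminary fact that $V$ ignores flat regions. Your Inclusion--Exclusion bookkeeping for a general piecewise linear $f$ is thus replaced by a single Radon--Nikodym identity $\nu_g([0,2\pi])=\int D_g\,dH^1$.
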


Let us briefly sketch the path leading to this result: we will start, in Section \ref{section:taucontinuous}, by introducing $\tau$-convergence for functions on $\Lip$ and justifying why this is the proper notion of convergence for our purposes. In Section \ref{section:bbs}, it will be shown that $\tau$-continuous valuations are bounded on sets which are bounded with respect to the norm $\norm{\cdot}$. This fact will prove to be essential for several steps of our construction. In Section \ref{section:outer parallel bands}, we will see that $\tau$-continuous valuations are small on functions which are supported on outer parallel bands of small width (see Lemma \ref{outer parallel bands}). Section \ref{section:controlmeasure} is devoted to the construction, based on the previous results, of a family of control measures $\mu_{\lambda,\gamma}$'s on $S^{n-1}$. Loosely speaking, if a (Borel) set $A\subset S^{n-1}$ is such that $\mu_{\lambda,\gamma}(A)$ is small, then small perturbations (with respect to $\|\cdot\|_\infty$ and in gradient) of a function $f\in\Lip$ on the set $A$ will not alter much the valuation on $f$. The most technical part of this section is to show that the $\mu_{\lambda,\gamma}$'s are subadditive on open sets  (Lemma \ref{lemma mu subadditive}): this step is needed in order to prove that they actually define Borel measures. Using the rotation invariance of the valuation, it follows that the measures $\mu_{\lambda,\gamma}$'s coincide with the Hausdorff measure $H^{n-1}$ up to a constant (depending on $\lambda$ and $\gamma$).

Up to this point, everything works for $S^{n-1}$ with any $n\in\mathbb N$; the remaining steps are still quite technical and, so far, we have only been able to handle them for the one-dimensional sphere $S^1$. In Section \ref{section:rep measure}, we will first observe that $ \tau$-continuous, rotation invariant valuations which are zero on constant functions ``do not see'' flat regions of functions (this will be formalized in Lemma \ref{l:flatszero}). Using this fact, for every piecewise linear function $ g\in\mathcal{L}(S^1) $ we will be able to introduce a measure $\nu_g$ on $S^1$ associated to it. Several technical arguments (see Lemmas \ref{l: lsubsets in S1} and \ref{l:bounded on S1}) will show that the measures $\nu_g$'s are absolutely continuous with respect to the Hausdorff measure $H^1$ on $S^1$. Then, in Section \ref{section:kernel}, using a particular family of piecewise linear functions we will define a pseudo kernel $K:\mathbb R\times\mathbb R_+\longrightarrow \mathbb R$; by means of Radon-Nikodym's theorem, we will give an integral representation for $ \tau$-continuous and rotation invariant valuations on $\lip$ in Section \ref{section:integral representation}.

\section{$\tau$-continuous valuations on $\Lip$}\label{section:taucontinuous}
It would seem natural a priori to study valuations $V:\Lip\longrightarrow \mathbb R$ which are continuous with respect to the standard Lipschitz norm $\norm{\cdot}$. However, we show next that they are  too large a class and, in general, do not admit an integral representation or a manageable form. After showing this, we present the stronger continuity that seems to be the correct requirement if one searches for the largest reasonably tractable class of valuations with an  integral representation.

We start by recalling two well-known facts from the classical theory of Banach spaces, which can be found in \cite{DS}.

First, the dual of $L_\infty(S^{n-1}, H^{n-1})$ can be naturally identified with the space of bounded finitely additive measures on the Borel sets of $S^{n-1}$ which vanish on sets of $H^{n-1}$-measure zero (cf. \cite[IV.8.16]{DS}).

Second, one such measure $\mu$ is countably additive if and only if it admits an integral representation with respect to $H^{n-1}$ (cf. \cite[III.10.2]{DS}). That is, if we call $\tilde{T}_\mu$ the continuous functional associated to $\mu$, then $\mu$ is countably additive if  and only if there exists a Radon-Nikodym derivative $g_\mu\in L_1(S^{n-1}, H^{n-1})$ such that, for every $f\in L_\infty(S^{n-1}, H^{n-1})$,

$$\tilde{T}_\mu(f)=\int_{S^{n-1}} fg_\mu dH^{n-1}.$$

So, consider now $\mu$ to be a bounded finitely additive measure on the dual of $L_\infty(S^{n-1}, H^{n-1})$ which is not countably additive, and call $\tilde{T}_\mu$ its associated continuous linear functional. Recall that Rademacher's theorem implies that for every $f\in\Lip$, the gradient $\nabla f(x)$ is defined for a.e. $x\in S^{n-1}$, and satisfies $\|\nabla f(x)\|\leq L(f)$. Thus, $\tilde{T}_\mu$ defines a linear functional $T_\mu:\Lip\longrightarrow \mathbb R$ by $$T_\mu(f)= \tilde{T}_\mu(\nabla f).$$

The simple fact that $f+g=f\vee g + f\wedge g$ implies that linear functionals are valuations, and the reasonings above show that the valuation $T_\mu$ will not admit a reasonable integral representation.

Therefore, as we said above, we need to restrict ourselves to valuations with stronger continuity properties. Let us introduce the following notion of convergence.

\begin{definition}
For $f, f_k\in\Lip$, $ k\in\mathbb{N} $, we say that $f_k\overset{\tau}\rightarrow f$ when
\begin{enumerate}
\item $\|f_k-f\|_\infty\underset{k}\rightarrow0$;
\item there exists $ C>0 $ such that $\underset{k}\sup\|\nabla f_k(x)\|\leq C$, for a.e. $x\in S^{n-1}$;
\item $\nabla f_k(x)\underset{k}\rightarrow\nabla f(x)$ for a.e. $x\in S^{n-1}$.
\end{enumerate}
\end{definition}

In this paper we will deal with valuations $V:\Lip\longrightarrow \mathbb R$ which are \textit{$\tau$-continuous}, that is, satisfying that for every $(f_k)_{k\in\mathbb N}, f$ in $\Lip$ such that $f_k\overset{\tau}\rightarrow f$, it holds that $V(f_k)\rightarrow V(f)$.

As a first justification of our choice of continuity, consider $\mu $ to be a bounded finitely additive measure vanishing on $H^{n-1}$-measure zero sets. Then it is easy to see that the linear functional $T_\mu$ defined as above is $\tau$-continuous if and only if $\mu$ is countably additive, which in turn, using the Radon-Nikodym Theorem, is equivalent to $T_\mu$ admitting an integral representation.

Let us recall the definition of support function: for a convex body $K\subset \mathbb R^n$, namely a compact and convex non-empty subset of $ \mathbb{R}^n $, its
\textit{support function} is $ h_K:S^{n-1}\longrightarrow\mathbb{R} $ defined by
$$ h_K(x)=\max_{y\in K}\langle x,y\rangle,\;x\in S^{n-1}, $$
where $ \langle\cdot,\cdot\rangle $ is the standard dot product in $ \mathbb{R}^n $.
Let $\mathcal H(S^{n-1})$ denote the space of support functions. Clearly, as every $h\in \mathcal H(S^{n-1})$ is convex, we have $\mathcal H(S^{n-1})\subset\Lip$. It was shown in \cite[Lemma 2.7]{CPTV} that
$\tau$-continuous valuations on $\mathcal H(S^{n-1})$ correspond to valuations on the convex bodies of $\mathbb R^n$ which are continuous with respect to the Hausdorff metric.

Although $\tau$-convergence does not correspond to a metric topology, we can define the following metric: for $f,g\in\Lip$ let
\begin{equation}\label{eq:dtau}
d_\tau(f,g)=\|f-g\|_\infty+\int_{S^{n-1}}|\nabla f(x)-\nabla g(x)|d H^{n-1}(x).
\end{equation}
Also, given $M>0$, let
$$
\LipM=\{f\in\Lip:\norm{f}\leq M \}=MB_{\Lip}.
$$
We have the following lemma.
\begin{lemma}
A valuation $V:\Lip\longrightarrow \mathbb R$ is $\tau$-continuous if and only if for every $M>0$, the restriction $V|_{\LipM}$ is continuous with respect to the metric $d_\tau$.
\end{lemma}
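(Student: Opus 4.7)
The plan is to handle the two implications separately. The forward direction will convert the $L^1$-convergence of gradients built into $d_\tau$ into the a.e.\ convergence required by $\tau$-convergence via a subsequence-of-subsequence argument, while the backward direction will use the dominated convergence theorem to upgrade $\tau$-convergence to $d_\tau$-convergence inside a single ball $\LipM$.

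For the forward implication, I assume $V$ is $\tau$-continuous, fix $M>0$, and take $\{f_k\}\subset \LipM$ with $d_\tau(f_k,f)\to 0$ for some $f\in\LipM$. To obtain $V(f_k)\to V(f)$, I invoke the standard principle that it suffices to check that every subsequence admits a further subsequence along which $V$ converges to $V(f)$. Given any subsequence $\{f_{k_j}\}$, the definition \eqref{eq:dtau} yields $\int_{S^{n-1}}|\nabla f_{k_j}-\nabla f|\,dH^{n-1}\to 0$, so by the standard $L^1$-to-a.e.\ extraction I can pass to a further subsequence with $\nabla f_{k_{j_\ell}}(x)\to \nabla f(x)$ for a.e.\ $x\in S^{n-1}$. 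Combined with $\|f_{k_{j_\ell}}-f\|_\infty\to 0$ and the uniform bound $\|\nabla f_{k_{j_\ell}}(x)\|\le L(f_{k_{j_\ell}})\le M$ that holds a.e.\ because $f_{k_{j_\ell}}\in\LipM$, this is exactly $f_{k_{j_\ell}}\overset{\tau}{\to} f$, so $\tau$-continuity of $V$ closes the argument.

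For the backward implication, I assume $V|_{\LipM}$ is $d_\tau$-continuous for every $M>0$ and take $f_k\overset{\tau}{\to} f$. First I verify that all the $f_k$ and $f$ lie in a common $\LipM$: the sup norms $\|f_k\|_\infty$ are bounded because of uniform convergence to $f\in\Lip$, and the uniform pointwise bound on $\|\nabla f_k\|$ from condition (2) translates into a uniform bound on $L(f_k)$, since on $S^{n-1}$ the Lipschitz constant (with respect to the Euclidean distance) is controlled by the essential supremum of the spherical gradient norm. Then I show $d_\tau(f_k,f)\to 0$: condition (1) handles the sup-norm contribution, and for the gradient integral I use that $\nabla f_k\to \nabla f$ a.e.\ by (3) together with the domination $|\nabla f_k(x)-\nabla f(x)|\le 2C$ (obtained from (2) after noting that the a.e.\ limit $\nabla f$ also satisfies $\|\nabla f\|\le C$), so the dominated convergence theorem gives $\int_{S^{n-1}}|\nabla f_k-\nabla f|\,dH^{n-1}\to 0$. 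The $d_\tau$-continuity of $V|_{\LipM}$ then delivers $V(f_k)\to V(f)$.

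The only subtle point lies in the forward implication, where $d_\tau$-convergence produces gradient convergence in $L^1$ rather than pointwise a.e.; the subsequence-of-subsequence argument above is the natural way around this, and I expect it to be essentially the only obstacle in the proof.
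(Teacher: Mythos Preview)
Your proof is correct and follows essentially the same approach as the paper. The forward direction is identical in spirit: both extract an a.e.-convergent subsequence from the $L^1$-convergent gradients (the paper phrases it as a contradiction, you as a subsequence-of-subsequence argument, which are equivalent). For the backward direction your use of the dominated convergence theorem is actually more direct than the paper's route, which passes to a subsequence converging in $H^{n-1}$-measure and then bounds $\int|\nabla f_{k_j}-\nabla f|\,dH^{n-1}$ by splitting $S^{n-1}$ into the sets where the integrand is small or large; your argument avoids both the unnecessary subsequence and this splitting.
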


\begin{proof}
Suppose that $V:\Lip\longrightarrow \mathbb R$ is $\tau$-continuous and for some $M,\delta>0$, $(f_k)\subset\Lip$ and $f\in\Lip$ are such that $\norm{f_k},\norm{f}\leq M$, $d_\tau (f_k,f)\underset{k\rightarrow\infty}\longrightarrow 0$ and
\begin{equation}\label{eq:notcontinuous}
|V(f_k)-V(f)|\geq \delta
\end{equation}
for each $k\in\mathbb N$. In particular, we have that
\begin{itemize}
\item $\|f_k-f\|_\infty\leq d_\tau (f_k,f)\underset{k\rightarrow\infty}\longrightarrow 0$,
\item $\underset{k}\sup\|\nabla f_k(x)\|\leq\underset{k}\sup L(f_k)\leq M$ for a.e. $x\in S^{n-1}$,
\item $\int_{S^{n-1}}|\nabla f_k(x)-\nabla f(x)|d H^{n-1}(x)\leq d_\tau(f_k,f)\underset{k\rightarrow\infty}\longrightarrow 0$, hence there is a subsequence such that $\nabla f_{k_j}(x)\underset{j\rightarrow\infty}\longrightarrow \nabla f(x)$ for a.e. $x\in S^{n-1}$ \cite[Propositions 3.1.3, 3.1.5]{Cohn}.
\end{itemize}
Therefore, we have that $f_{k_j}\overset{\tau}\rightarrow f$, which implies $|V(f_{k_j})-V(f)|\underset{j\rightarrow\infty}\longrightarrow 0$, in contradiction with \eqref{eq:notcontinuous}. This shows that $V|_{\LipM}$ is continuous with respect to the metric $d_\tau$.

Conversely, suppose that for every $M>0$, $V|_{\LipM}$ is continuous with respect to the metric $d_\tau$, and for some $\delta>0$ and $f_k\overset{\tau}\rightarrow f$ we have
\begin{equation}\label{eq:notcontinuous2}
|V(f_k)-V(f)|\geq \delta,
\end{equation}
for every $k\in\mathbb N$. Since $f_k\overset{\tau}\rightarrow f$, there is some $C>0$ such that
$$
\underset{k}\sup\|\nabla f_k(x)\|\leq C,
$$
for a.e. $ x\in S^{n-1} $. Therefore, we also have that $L(f_k)\leq C$, so $f_k, f\in\LipM$ for some $M>0$ and every $ k\in\mathbb{N} $. Since
$$
\nabla f_k(x)\underset{k\rightarrow\infty}\longrightarrow\nabla f(x)
$$
for a.e. $x\in S^{n-1}$, there is a subsequence such that
$$
\nabla f_{k_j}\underset{j\rightarrow\infty}\longrightarrow\nabla f
$$
in $H^{n-1}$-measure \cite[Proposition 3.1.2]{Cohn}.

This means that for every $\varepsilon>0$, we have
$$
H^{n-1}(\{x\in S^{n-1}:|\nabla f_{k_j}(x)-\nabla f(x)|\geq \varepsilon/2\})\underset{j\rightarrow\infty}\longrightarrow 0.
$$
Therefore, given $\varepsilon>0$, we can find $j_0\in\mathbb N$ such that for $j\geq j_0$ the set
$$
A_{j,\varepsilon}=\{x\in S^{n-1}:|\nabla f_{k_j}(x)-\nabla f(x)|\geq \varepsilon/2\}
$$
satisfies $H^{n-1}(A_{j,\varepsilon})<\varepsilon/4M$. Thus, for $j\geq j_0$ we have
\begin{eqnarray*}
\int_{S^{n-1}}|\nabla f_{k_j}(x)-\nabla f(x)|d H^{n-1}(x)&=&\int_{A_{j,\varepsilon}}|\nabla f_{k_j}(x)-\nabla f(x)|d H^{n-1}(x)\\
&+&\int_{S^{n-1}\backslash A_{j,\varepsilon}}|\nabla f_{k_j}(x)-\nabla f(x)|d H^{n-1}(x)\\
&\leq &2M H^{n-1}(A_{j,\varepsilon})+\frac{\varepsilon}{2}H^{n-1}(S^{n-1})<\varepsilon.
\end{eqnarray*}
It follows that $d_\tau(f_{k_j},f)\underset{j\rightarrow\infty}\longrightarrow 0$, so we must have $V(f_{k_j})\underset{j\rightarrow\infty}\longrightarrow V(f)$, in contradiction with \eqref{eq:notcontinuous2}.
\end{proof}

\section{Boundedness on norm-bounded sets}\label{section:bbs}

We do not know whether $\norm{\cdot}$-continuous valuations on $\Lip$ are bounded on $\norm{\cdot}$-bounded sets. We will see next that this is the case for $\tau$-continuous valuations.

Let us start with a technical lemma (cf. \cite[Section 3.1.2, Corollary I]{EvGa}).

\begin{lemma}\label{gradient0}
Let $f: S^{n-1}\longrightarrow \mathbb R$ be a Lipschitz function, $c\in \mathbb R$ and let
$$
Z_c=\{x\in S^{n-1}:f(x)=c\}.
$$
Then $\nabla f(x)=0$ for a.e. $x\in Z_c$.
\end{lemma}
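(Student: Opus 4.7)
The plan is to reduce this spherical statement to the classical Euclidean result of Evans--Gariepy cited above, by working in local charts on $S^{n-1}$. First, I would cover $S^{n-1}$ by finitely many open sets $U_1,\dots,U_m$, each equipped with a smooth, bi-Lipschitz chart $\phi_i\colon U_i\to\Omega_i\subset\mathbb{R}^{n-1}$ (for instance, hemispheres projected orthogonally onto a coordinate hyperplane). On each chart, the composition $g_i=f\circ\phi_i^{-1}$ is a Lipschitz function on $\Omega_i$ whose level set at height $c$ is $\phi_i(Z_c\cap U_i)$. Since $\phi_i$ is a smooth diffeomorphism with invertible differential, the chain rule identifies the spherical gradient $\nabla f(x)$ with the Euclidean gradient $\nabla g_i(\phi_i(x))$ up to the invertible linear factor $D\phi_i(x)$; in particular, one vanishes exactly when the other does. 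Moreover, the pushforward $(\phi_i)_\ast H^{n-1}$ and the Lebesgue measure $\mathcal{L}^{n-1}$ on $\Omega_i$ are mutually absolutely continuous, so it suffices to prove the Euclidean statement that $\nabla g_i=0$ at $\mathcal{L}^{n-1}$-a.e.\ point of $\{g_i=c\}$.

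For this Euclidean statement, I would combine Rademacher's theorem with the Lebesgue density theorem. By Rademacher, $g_i$ is differentiable at $\mathcal{L}^{n-1}$-a.e.\ $y\in\Omega_i$, and by the density theorem $\mathcal{L}^{n-1}$-almost every point of $\{g_i=c\}$ is a point of density one of that set. Fix such a $y_0\in\{g_i=c\}$, and suppose toward a contradiction that $v:=\nabla g_i(y_0)\neq 0$. Differentiability at $y_0$ gives
$$g_i(y_0+h)-c=v\cdot h+o(|h|)\quad\text{as }|h|\to 0.$$
Inside the open half-cone $C=\{h\in\mathbb{R}^{n-1}:v\cdot h\geq\tfrac{1}{2}|v|\,|h|\}$, the right-hand side is bounded below by $\tfrac{1}{2}|v|\,|h|-o(|h|)>0$ once $|h|$ is small enough, so a full neighborhood of $y_0$ intersected with $y_0+C$ lies in $\{g_i>c\}$ and is therefore disjoint from $\{g_i=c\}$. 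Because $C$ has a fixed positive solid-angle density $\alpha>0$ at the origin, this forces
$$\mathcal{L}^{n-1}\!\bigl(\{g_i=c\}\cap B(y_0,r)\bigr)\leq(1-\alpha)\,\mathcal{L}^{n-1}\!\bigl(B(y_0,r)\bigr)$$
for all sufficiently small $r$, contradicting the density-one property of $\{g_i=c\}$ at $y_0$. Thus $v=0$.

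Taking the union over the finite cover $U_1,\dots,U_m$ and transferring the conclusion back through the charts yields $\nabla f=0$ at $H^{n-1}$-a.e.\ point of $Z_c$, as required. The main (and essentially only) delicate step is this last transfer: one must keep track of the fact that the spherical and Euclidean gradients vanish at the same points under a smooth bi-Lipschitz chart, and that $\mathcal{L}^{n-1}$-null subsets of $\Omega_i$ correspond to $H^{n-1}$-null subsets of $U_i$. Once this bookkeeping is set up, the density-point argument above is completely routine.
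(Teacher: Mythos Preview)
Your argument is correct, but it follows a different route from the paper's. The paper extends $f$ to a Lipschitz function $\hat f$ on all of $\mathbb{R}^n$ (via McShane), then cites \cite[Section 3.1.2, Corollary I]{EvGa} directly to conclude that the Euclidean gradient $\nabla_e\hat f$ vanishes a.e.\ on the level set $\{\hat f=c\}\subset\mathbb{R}^n$, and finally reads off the spherical gradient from the tangential components of $\nabla_e\hat f$. You instead pull $f$ down to $\mathbb{R}^{n-1}$ through bi-Lipschitz charts and prove the Euclidean level-set statement yourself via Rademacher and the Lebesgue density theorem.

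Your approach is more intrinsic and, in a sense, more robust: by working in the correct dimension $n-1$ from the start, the passage from ``Lebesgue-a.e.'' to ``$H^{n-1}$-a.e.'' is a genuine equivalence through the bi-Lipschitz charts, whereas the paper's extension to $\mathbb{R}^n$ makes this step look delicate (the sphere itself has $n$-dimensional Lebesgue measure zero, so one must read the cited Evans--Gariepy corollary with some care). The price you pay is slightly more bookkeeping (charts, chain rule, pushforward of measures) and the need to reproduce the density-point argument rather than just cite it. Both approaches are standard; yours is closer in spirit to how one would prove the analogous statement on a general Riemannian manifold.
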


\begin{proof}
Every Lipschitz function $f: S^{n-1}\longrightarrow \mathbb R$ can be extended to a function $ \hat{f}:\mathbb{R}^n\longrightarrow\mathbb{R} $ which is still Lipschitz continuous with the same Lipschitz constant
(see \cite{McShane}). It follows from the definition of spherical gradient that, for a.e. $x\in S^{n-1}$ and $v\in \mathbb R^{n}$ with $\langle v,x\rangle=0$,
\begin{equation}\label{connection between the gradients}
\langle\nabla f(x),v\rangle=\langle\nabla_e \hat{f}(x),v\rangle,
\end{equation}
where $\nabla_e$ denotes the Euclidean gradient.

For every $c\in \mathbb R$, the function $\bar{f}=\hat{f}-c:\mathbb{R}^n\longrightarrow\mathbb{R}$ is still Lipschitz continuous. Set
$$
V_c=\{x\in \mathbb R^n:\bar{f}(x)=0\}
$$
and note that by \cite[Section 3.1.2, Corollary I]{EvGa} we have $ \nabla_e \bar{f}(x)=0 $, for all $x\in V_c$ outside of a set of Lebesgue measure zero. In particular, $ \nabla_e \hat{f}(x)=0 $ for these $x$'s, and
then $\nabla f(x)=0$ for $ H^{n-1}$-a.e. $x\in Z_c\subset V_c$, using \eqref{connection between the gradients}.
\end{proof}

\begin{lemma}\label{l:bbs}
Let $V:\Lip\longrightarrow \mathbb R$ be a $\tau$-continuous valuation. Then $V$ is bounded on $\norm{\cdot}$-bounded sets.
\end{lemma}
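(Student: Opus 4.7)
The plan is to argue by contradiction: if $V$ fails to be bounded on $\LipM$ for some $M>0$, there is a sequence $(f_k)\subset\LipM$ with $|V(f_k)|\to\infty$, and from this I would extract a $\tau$-convergent subsequence along which $|V|$ still diverges, contradicting $\tau$-continuity. Replacing $V$ by $V-V(0)$ (still a valuation), one may assume $V(0)=0$; since $c\mapsto c\uno$ is $\norm{\cdot}$-continuous in $c\in\mathbb R$ and thus $\tau$-continuous, the quantity $C:=\sup_{c\in[-M,M]}|V(c\uno)|$ is finite.

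The central tool is a \emph{layered decomposition}. Given a partition $-M=c_0<c_1<\cdots<c_N=M$ and $f\in\LipM$, set $T_i(f)=(f\vee c_{i-1})\wedge c_i$. Two applications of the valuation identity $V(f\vee c)+V(f\wedge c)=V(f)+V(c\uno)$, combined with the telescoping $f\wedge c_N=f$, $f\wedge c_0=-M\uno$, yield
\[
V(f)=\sum_{i=1}^{N}V(T_i(f))-\sum_{i=1}^{N-1}V(c_i\uno).
\]
Applied with an equispaced mesh $2M/N_k$, where $N_k\to\infty$ is chosen with $N_k=o(|V(f_k)|)$, pigeonhole produces an index $i_k$ with $|V(T_{i_k}(f_k))|\geq(|V(f_k)|-N_kC)/N_k\to\infty$. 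Setting $g_k:=T_{i_k}(f_k)\in\LipM$, the function takes values in an interval $[a_k,b_k]$ of width $2M/N_k\to 0$, and by Lemma \ref{gradient0} the gradient $\nabla g_k$ vanishes almost everywhere outside $S_k:=\{a_k<f_k<b_k\}$, with $|\nabla g_k|\leq M$ on $S_k$.

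If, in addition, $H^{n-1}(S_k)\to 0$, then $\|\nabla g_k\|_{L^1}\leq M\,H^{n-1}(S_k)\to 0$ and a further subsequence has $\nabla g_k\to 0$ a.e.\ \cite[Proposition 3.1.3]{Cohn}; extracting $a_k\to a\in[-M,M]$ gives $g_k\to a\uno$ uniformly, so $g_k\overset{\tau}\to a\uno$. By $\tau$-continuity, $V(g_k)\to V(a\uno)<\infty$, contradicting $|V(g_k)|\to\infty$.

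The hard step is to arrange $H^{n-1}(S_k)\to 0$ simultaneously with the divergence of $|V(g_k)|$, since the pigeonhole selects the $V$-heaviest layer rather than a measure-light one. To address this I would iterate the layered decomposition inside the chosen layer: re-partition $[a_k,b_k]$ into finer sub-layers (noting that the sub-layer truncation of $g_k$ coincides with that of $f_k$, so the decomposition formula applies unchanged) and pigeonhole again to pick a sub-layer with divergent $|V|$ and narrower range. After $J_k\to\infty$ such refinements, the nested narrowing intervals contract strictly to a single value; by downward continuity of $H^{n-1}$ and the strictness of the inequalities defining $S_k$, the active set shrinks to $H^{n-1}$-measure zero. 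A diagonal choice of $N_k$ and $J_k$, feasible provided one first passes to a subsequence of $(f_k)$ along which $|V(f_k)|$ grows faster than $2^{J_k}N_kC$, preserves the divergence of $|V(g_k)|$ while achieving the measure bound. This joint control is the main obstacle of the argument.
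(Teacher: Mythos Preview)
Your overall plan---argue by contradiction, truncate the functions to narrow their range, and manufacture a $\tau$-convergent subsequence along which $|V|$ still diverges---matches the paper's, and your telescoping layer formula is correct. The gap is exactly where you flag it: the ``joint control'' of divergence and measure. Your proposed resolution is circular. The number of refinements $J_k$ needed to force $H^{n-1}(S_k^{(J_k)})<\varepsilon_k$ is determined by $f_k$ \emph{and by which sublayer the pigeonhole selects at every stage}; the pigeonhole picks the $V$-heaviest sublayer, which carries no information about how much measure it occupies. Thus $J_k$ is not known until after $f_k$ is fixed and the full refinement has been run, so one cannot ``first pass to a subsequence along which $|V(f_k)|$ grows faster than $2^{J_k}N_kC$''. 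Nothing prevents $J_k$ from growing much faster than $\log|V(f_k)|$, in which case every diagonal choice kills the divergence.

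The paper's proof separates the two tasks and, crucially, replaces your fixed-mesh refinement by a \emph{median} cut in the second phase. After a first bisection-in-range induction (your layered decomposition with $N=2$, iterated and diagonalised) one obtains $g_i$ with $\|g_i-\lambda\uno\|_\infty\to 0$, $\norm{g_i}\le L$, and $|V(g_i)|\to\infty$. Then, for each $g_i$, one cuts at a median $m_i^1$ of $g_i$: one of $g_i\vee m_i^1\uno$, $g_i\wedge m_i^1\uno$ has large $|V|$ along an infinite subset, and by construction this new function is constant on a set of $H^{n-1}$-measure at least $\tfrac12$, so by Lemma~\ref{gradient0} its gradient vanishes on a set of measure at least $\tfrac12$. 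Repeating with the median of the residual active set halves the active measure at every step, \emph{independently of the function and of $V$}. After $j$ iterations the gradient vanishes outside a set of measure $\le 2^{-j}$; a standard diagonalisation then yields $g_i^i\overset{\tau}\to\lambda\uno$ with $|V(g_i^i)|\to\infty$, the desired contradiction. The point is that the median guarantees a uniform geometric decay of the active measure, which is precisely the quantitative control your fixed-partition pigeonhole cannot provide.
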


\begin{proof}
We reason by contradiction. If the result is not true, then there exist $L>0$ and a sequence $(f_i)_{i\in \mathbb N}\subset\Lip$, with $\norm{f_i}\leq L$ for every $i\in \mathbb N$ and such that $|V(f_i)|\rightarrow +\infty$.

Consider the function $$\theta:\mathbb R\longrightarrow \mathbb R$$ defined by $$\theta(c)=V(c\uno),$$ for $ c\in\mathbb{R} $, where $\uno$ denotes the constant function equal to one. The continuity of $V$ implies that $\theta$ is continuous. Clearly, $\theta$ is uniformly continuous on $[-L,L]$ and thus bounded, that is, there exists $C>0$ such that, for every $c\in [-L,L]$, $$|V(c \uno)|\leq C.$$

We define inductively two sequences $(a_j)_{j\in \mathbb N}, (b_j)_{j\in \mathbb N}\subset \mathbb R$: first set $a_0=-L$, $b_0=L$, $c_0=\frac{a_0+b_0}{2}$.
Note that $$V(f_i\vee c_0\uno) +V(f_i\wedge c_0\uno)=V(f_i) + V(c_0\uno).$$
Since $|V(c_0\uno)|\leq C$ and $|V(f_i)|\rightarrow +\infty$, we know that there must exist an infinite set $M_1\subset \mathbb N$ such that for $i\in M_1$ either $|V(f_i\vee c_0\uno)|\rightarrow +\infty$ or $|V(f_i\wedge c_0\uno)|\rightarrow +\infty$ as $i$ increases to $\infty$. In the first case, we set $a_1=c_0$, $b_1=L$ and $f^1_i=f_i\vee c_0\uno$, while in the second case we set $a_1=-L$, $b_1=c_0$ and $f^1_i=f_i\wedge c_0\uno$. Note that, in either case, we have  $\norm{f_i^1}\leq L$ for every $i\in M_1$. Now we define $c_1=\frac{a_1+ b_1}{2}$ and proceed similarly.

Inductively, we construct two sequences $(a_j), (b_j)\subset \mathbb R$, a decreasing sequence of infinite subsets $M_j\subset \mathbb N$, and sequences $(f^j_i)_{i\in M_j}\subset\Lip$ such that, for every $j\in\mathbb N$,
$$
|a_j-b_j|=\frac{L}{2^{j-1}},
$$
and for every $i\in M_j$, $t\in S^{n-1}$,
$$
a_j\leq f^j_i(t)\leq b_j,
$$
and with the property that
$$
\lim_{ i\rightarrow \infty} |V(f^j_i)|=+\infty.
$$
Passing to a further subsequence we may assume without loss of generality that $$\lim_{i\rightarrow\infty}|V(f_i^i)|=+\infty.$$

Call $\lambda=\lim_i a_i$ and $g_i=f_i^i$, for $ i\in\mathbb{N} $. The sequence $(g_i)_{i\in \mathbb N}\subset\Lip$ satisfies $a_i\leq g_i\leq b_i$, $\norm{g_i}\leq L$, $\|g_i-\lambda\uno\|_\infty \rightarrow 0$ and $|V(g_i)|\rightarrow\infty$.

We do now a second induction to obtain $\tau$-convergence. To this end, we will define a double indexed sequence $(m_i^j)_{i,j\in \mathbb N}$. In the first step, for every $i\in \mathbb N$, we consider the number $m_i^1:=m(g_i), $
where $m(g_i)$ is a \textit{median} of $f_i$, defined as a number in $[-L,L]$ which  satisfies
\begin{eqnarray*}
H^{n-1}(\{x\in S^{n-1}: g_i(x)\geq m(g_i)\})&\geq &\frac12 H^{n-1}(S^{n-1})=\frac{1}{2}\\
H^{n-1}(\{x\in S^{n-1}: g_i(x)\leq m(g_i)\})
&\geq &\frac12 H^{n-1}(S^{n-1})=\frac{1}{2}.
\end{eqnarray*}

Note that the continuity of $g_i$ and the fact that $S^{n-1}$ is connected, imply that, in this step, the median is unique. However, at this point, we do not need uniqueness in our proof. In the next steps, the median is defined on not necessarily connected subsets, so uniqueness will not follow. 

The valuation property implies that  $$V(g_i\vee m_i^1 \uno) +V(g_i\wedge m_i^1 \uno)=V(g_i) + V(m_i^1\uno).$$
Since $|V(m_i^1\uno)|\leq C$ and $|V(g_i)|\rightarrow +\infty$, we know that there must exist an infinite set which with a little abuse of notation will still be called $M_1\subset \mathbb N$, such that for $i\in M_1$ either $|V(g_i\vee m_i^1\uno)|\rightarrow +\infty$ or $|V(g_i\wedge m_i^1\uno)|\rightarrow +\infty$ as $i$ increases to $\infty$.

In the first case, we set $g^1_i=g_i\vee m_i^1\uno$. In the second case, we set $g^1_i=g_i\wedge m_i^1\uno$. Note that in either case, we have  $\norm{g_i^1}\leq L$ for every $i\in M_1$.

Lemma \ref{gradient0} implies that $\nabla g_i^1(x)=0$ for $H^{n-1}$-a. e. $x\in (g_i^1)^{-1}(\{m_i^1\})$. Since $H^{n-1}((g_i^1)^{-1}(\{m_i^1\})\geq \frac{1}{2}$, we have that $\nabla g_i^1(x)=0$ for every $x$ in a set of measure larger than or equal to $\frac12$.

For every $i\in M_1$ we consider the set
$$
A^1_i=\{x\in S^{n-1}:g^1_i(x)\not = m_i^1\}.
$$
Clearly, $H^{n-1}(A^1_i)\leq \frac12$.
Then, for every $i\in M_1$ we consider the ``median in $A^1_i$'', $m_i^2$, as the number satisfying
\begin{eqnarray*}
H^{n-1}(\{x\in A_i^1: g_i^1(x)\geq m_i^2\})&\geq &\frac12 H^{n-1}(A^1_i)\\
H^{n-1}(\{x\in A_i^1: g_i^1(x)\leq m_i^2\}) &\geq &\frac12 H^{n-1}(A^1_i).
\end{eqnarray*}
Again, this median exists.

We proceed as before and note that the valuation property implies $$V(g_i^1\vee m_i^2 \uno) +V(g_i^1\wedge m_i^2 \uno)=V(g_i^1) + V(m_i^2\uno).$$
Since $|V(m_i^2\uno)|\leq C$ and $|V(g_i^1)|\rightarrow +\infty$, we know that there must exist an infinite set $M_2\subset M_1$ such that for $i\in M_2$ either $|V(g_i^1\vee m_i^2\uno)|\rightarrow +\infty$ or $|V(g_i^1\wedge m_i^2\uno)|\rightarrow +\infty$ as $i$ increases to $\infty$.

In the first case, we set $g^2_i=g_i^1\vee m_i^2\uno$. In the second case, we set $g^2_i=g_i^1\wedge m_i^2\uno$. Note that in either case, we have  $\norm{g_i^2}\leq L$ for every $i\in M_2$. Assume $g^2_i=g_i^1\vee m_i^2\uno$ (the other case is analogous).

If $ m_i^1>m_i^2 $,
\begin{align*}
H^{n-1}((g_i^2)^{-1}(\{m_i^1,m_i^2\}))&=H^{n-1}(\{g_i^2=m_i^1\})+H^{n-1}(\{g_i^1\leq m_i^2\})\\
&=H^{n-1}(\{g_i^1\vee m_i^2\uno=m_i^1\})+H^{n-1}(A_i^1\cap\{g_i^1\leq m_i^2\})\\
&\geq H^{n-1}(\{g_i^1\vee m_i^2\uno=m_i^1\})+\frac{1}{2}H^{n-1}(A_i^1)\\
&=H^{n-1}((A_i^1)^c)+\frac{1}{2}H^{n-1}(A_i^1)\\
&=1-\frac{1}{2}H^{n-1}(A_i^1)\geq\frac{3}{4}.
\end{align*}
If $ m_i^1\leq m_i^2 $ instead,
\begin{align*}
H^{n-1}((g_i^2)^{-1}(\{m_i^1,m_i^2\}))&=H^{n-1}(\{g_i^1\leq m_i^2\})\\
&=H^{n-1}((A_i^1)^c)+H^{n-1}(A_i^1\cap\{g_i^1\leq m_i^2\})\\
&\geq H^{n-1}((A_i^1)^c)+\frac{1}{2}H^{n-1}(A_i^1)\\
&= 1-\frac{1}{2}H^{n-1}(A_i^1)\geq\frac{3}{4}.
\end{align*}
In either case, we get
$$
H^{n-1}((g_i^2)^{-1}(\{m_i^1,m_i^2\}))\geq\frac{3}{4}.
$$

It follows again from Lemma \ref{gradient0} that $\nabla g_i^2(x)=0$ for $H^{n-1}$-a. e. $x\in (g_i^2)^{-1}(\{m_i^1, m_i^2\})$. Therefore, we have that $\nabla g_i^2(x)=0$ for every $x$ in a set of measure larger than or equal to $\frac34$.

We proceed inductively and we construct  a decreasing sequence of infinite subsets $M_j\subset \mathbb N$,  and sequences $(g^j_i)_{i\in M_j}\subset\Lip$  such that, for every $j\in\mathbb N$, for every $i\in M_j$, $\|\nabla g_i^j\|\leq L$, $a_i\leq g_i^j\leq b_i$, $$\lim_{i\rightarrow \infty} |V(g_i^j)|=+\infty$$ and $$H^{n-1}\left(\{t\in S^{n-1}:\;\nabla g_i^j(t)\not =0\}\right)\leq \frac1{2^j}.$$

Passing to a further subsequence if needed, we may assume that the sequence $(g_i^i)_{i\in \mathbb N}$ satisfies
$\lim_i \|g_i^i-\lambda \uno\|_\infty=0$, $\lim_i |V(g_i^i)|=+\infty$, $\|\nabla g_i^i\|\leq L$ and $$H^{n-1}\left(\{t\in S^{n-1}:\;\nabla g_i^i(t)\not =0\}\right)\leq \frac1{2^i},$$
for every $ i\in\mathbb{N} $. Therefore, $g_i^i\overset{\tau}\rightarrow \lambda \uno$, but $|V(g_i^i)|\rightarrow +\infty$, a contradiction.

\end{proof}

\section{Functions supported on outer parallel bands}\label{section:outer parallel bands}

Throughout the paper, we will use the notation
$$
V_\lambda(f):=V(f+\lambda)-V(\lambda),
$$
for $ \lambda\in\mathbb{R} $ and $ f\in\Lip $, where $ \lambda:= \lambda\uno $. Moreover, given a set $A\subset S^{n-1}$ and $f\in\Lip$, we will write $f\prec A$ whenever $\supp(f)=\overline{\{x\in S^{n-1}: f(x)\neq0\}}$ satisfies $\supp(f)\subset A$.
We also introduce the following definition.

\begin{definition}\label{def:outer parallel bands}
Given a set $\emptyset\neq A\subset S^{n-1}$ and $\omega>0$, the {\em outer parallel band} around
$A$ is the set
$$
A^\omega=\{t\in S^{n-1} : 0<d(t, A)<\omega\}.
$$
For convenience, we set $ \emptyset^{\omega}:=\emptyset $.
\end{definition}

The next lemma allows us to control $V$ on outer parallel bands of small width.

\begin{lemma}\label{outer parallel bands}
Let $V:\Lip\longrightarrow \mathbb R$ be a $\tau$-continuous valuation. Let $A,B\subset S^{n-1}$ be Borel sets and let $\lambda \in \mathbb R$, $\gamma \in \mathbb R_+$.

Then
$$
\lim_{\omega\rightarrow 0^+} \sup\{|V_\lambda(f)|: \, f\prec A^\omega\cup B^\omega, \, L(f)\leq
\gamma \}=0.
$$
\end{lemma}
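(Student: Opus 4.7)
The plan is a contradiction argument via $\tau$-continuity. Suppose the conclusion fails: there exist $\delta>0$, $\omega_k\searrow 0$ and $f_k\in\Lip$ with $f_k\prec A^{\omega_k}\cup B^{\omega_k}$, $L(f_k)\leq\gamma$, and $|V_\lambda(f_k)|\geq\delta$ for every $k$. If I can show $f_k\overset{\tau}\rightarrow 0$, then $f_k+\lambda\overset{\tau}\rightarrow\lambda$, so $V(f_k+\lambda)\to V(\lambda)$ by $\tau$-continuity, and $V_\lambda(f_k)\to 0$, contradicting $|V_\lambda(f_k)|\geq\delta$.

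The driving observation is that $A^\omega\cup B^\omega$ shrinks to the empty set pointwise as $\omega\to 0^+$: since $x\in A^\omega$ requires $0<d(x,A)<\omega$, one has either $d(x,A)=0$ (in which case $x\notin A^\omega$ for any $\omega$) or $d(x,A)>0$ (in which case $x\notin A^\omega$ once $\omega<d(x,A)$); the same dichotomy holds for $B^\omega$. Hence for every $x\in S^{n-1}$ there exists $k_0(x)$ such that $x\notin A^{\omega_k}\cup B^{\omega_k}$, and thus $f_k(x)=0$, for all $k\geq k_0(x)$. Fixing any base point $x_0\in S^{n-1}$ and applying the Lipschitz bound from $x_0$ gives $\|f_k\|_\infty\leq\gamma\cdot\mathrm{diam}(S^{n-1})$ for $k\geq k_0(x_0)$. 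Equicontinuity from $L(f_k)\leq\gamma$, combined with Arzel\`a--Ascoli, upgrades the pointwise convergence $f_k\to 0$ to uniform convergence: $\|f_k\|_\infty\to 0$.

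For the gradient condition of $\tau$-convergence, I invoke Lemma \ref{gradient0} with $c=0$ for each $f_k$: outside a null set $N_k$, $\nabla f_k$ vanishes on $\{f_k=0\}$. The set $N=\bigcup_k N_k$ is still null, and for every $x\notin N$ and $k\geq k_0(x)$ one has $f_k(x)=0$ and $x\notin N_k$, whence $\nabla f_k(x)=0$. Therefore $\nabla f_k\to 0$ a.e.; together with the uniform bound $\|\nabla f_k\|_\infty\leq L(f_k)\leq\gamma$, the three requirements defining $\tau$-convergence are met, proving $f_k\overset{\tau}\rightarrow 0$.

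The step I expect to be the main obstacle is the a.e.\ convergence of the gradients. For arbitrary Borel $A,B$ the set $\overline{A}\cup\overline{B}$ may carry positive $H^{n-1}$-measure (for instance, $A$ a fat Cantor subset of $S^1$), so the pointwise vanishing $f_k(x)=0$ eventually cannot be upgraded to local vanishing of $f_k$ in a neighborhood of a.e.\ $x$. Lemma \ref{gradient0} is the device that circumvents this: it converts the pointwise statement $f_k(x)=0$ into the measure-theoretic statement $\nabla f_k(x)=0$ up to a null set, which is exactly what is needed to verify $\tau$-convergence here.
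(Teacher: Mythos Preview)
Your proof is correct and follows the same overall scheme as the paper --- contradiction, then $\tau$-convergence of the offending sequence to $0$ --- but the execution of both the uniform and the gradient steps differs from the paper's and is in fact cleaner.

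For $\|f_k\|_\infty\to 0$, the paper extracts a point $x_i$ where $|f_i|$ attains its maximum, uses compactness to pass to a limit $x$, argues that $x\in\partial A\cup\partial B$, shows $f_{i_j}(x)=0$ eventually, and then applies the Lipschitz bound from $x$. You instead observe directly that $\bigcap_\omega (A^\omega\cup B^\omega)=\emptyset$ pointwise, so $f_k(x)\to 0$ for every $x$, and then use equicontinuity on the compact sphere (Arzel\`a--Ascoli) to upgrade this to uniform convergence. This bypasses the boundary analysis entirely.

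For $\nabla f_k\to 0$ a.e., the paper invokes Lemma~\ref{measure of outer parallel bands goes to zero} to get $H^{n-1}(\{f_{i_j}\neq 0\})\to 0$ and then appeals to Lemma~\ref{gradient0}; this gives $H^{n-1}(\{\nabla f_{i_j}\neq 0\})\to 0$, which strictly speaking still requires a further subsequence for a.e.\ convergence (harmless in a contradiction argument). Your argument is more direct: since each fixed $x$ eventually lies in $\{f_k=0\}$, and the countable union of the exceptional null sets from Lemma~\ref{gradient0} is still null, you get $\nabla f_k(x)=0$ eventually for a.e.\ $x$ --- genuine pointwise a.e.\ convergence without any measure-theoretic detour or further subsequence. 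The trade-off is that the paper's route makes the smallness of the supports quantitatively visible via $H^{n-1}(A^\omega\cup B^\omega)\to 0$, a fact used again later, whereas your argument is purely qualitative but self-contained for this lemma.
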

To prove this, we are going to need the following well-known result.
\begin{lemma}\label{measure of outer parallel bands goes to zero}
Let $ A\subset S^{n-1} $ be a Borel set. Then
$$ \lim_{\omega\rightarrow 0^+}H^{n-1}(A^\omega)=0. $$
\end{lemma}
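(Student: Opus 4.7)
The plan is to observe that $A^\omega$ is a decreasing family of sets (as $\omega\to 0^+$) whose intersection is empty, and then invoke continuity of the finite measure $H^{n-1}$ from above.

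First I would check measurability. Since $A\subset S^{n-1}$ is any (Borel) set, the distance function $t\mapsto d(t,A)$ is $1$-Lipschitz, hence continuous. Therefore $A^\omega=\{t\in S^{n-1}:0<d(t,A)<\omega\}$ is the preimage of the open interval $(0,\omega)$ under a continuous function, so $A^\omega$ is open, in particular Borel, and $H^{n-1}(A^\omega)$ is well defined. Also, $H^{n-1}(A^\omega)\leq H^{n-1}(S^{n-1})=1$, so the measure is finite.

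Next I would note that the family $\{A^\omega\}_{\omega>0}$ is monotone decreasing: if $0<\omega_1\leq\omega_2$, then $A^{\omega_1}\subset A^{\omega_2}$. I would then identify the intersection. If $t\in\overline{A}$, then $d(t,A)=0$, so the condition $0<d(t,A)$ fails and $t\notin A^\omega$ for every $\omega>0$. If $t\notin\overline{A}$, then $\delta:=d(t,A)>0$ and for any $\omega\leq\delta$ the condition $d(t,A)<\omega$ fails, so again $t\notin A^\omega$ for all sufficiently small $\omega$. Hence
\[
\bigcap_{\omega>0} A^\omega=\emptyset.
\]

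Finally, I would apply continuity from above of the finite measure $H^{n-1}$: taking any sequence $\omega_k\downarrow 0^+$, the sets $A^{\omega_k}$ are decreasing with empty intersection, so $H^{n-1}(A^{\omega_k})\to H^{n-1}(\emptyset)=0$. Since $\omega\mapsto H^{n-1}(A^\omega)$ is monotone in $\omega$, convergence along one sequence yields $\lim_{\omega\to 0^+} H^{n-1}(A^\omega)=0$, which completes the proof. There is no serious obstacle here; the only point requiring attention is that the strict inequality $0<d(t,A)$ in the definition of $A^\omega$ is precisely what makes the intersection empty rather than $\overline{A}$ (which could carry positive measure).
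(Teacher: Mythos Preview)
Your proof is correct and follows essentially the same approach as the paper: both arguments show that $\bigcap_{\omega>0}A^\omega=\emptyset$ and then invoke continuity from above of the finite measure $H^{n-1}$. The paper phrases this as a proof by contradiction along a sequence $\omega_i\searrow 0$, whereas you give the direct version and add the observation that $A^\omega$ is open; the substance is the same.
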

\begin{proof}
If this were not the case, we would have a number $ \varepsilon>0 $ and a sequence $ \omega_i
\searrow 0 $ such that $ H^{n-1}(A^{\omega_i})>\varepsilon $, for every $ i\in\mathbb{N} $.

If $ x\in\bigcap_{i\in\mathbb{N}}A^{\omega_i} $, then
$$ 0<d(x,A)<\omega_i, $$
for every $ i\in\mathbb{N} $; passing to the limit in the second inequality we have a contradiction.

Therefore $ \bigcap_{i\in\mathbb{N}}A^{\omega_i}=\emptyset $, hence
$$ 0=H^{n-1}\left(\bigcap_{i\in\mathbb{N}}A^{\omega_i}\right)=\lim_{i\rightarrow\infty}
H^{n-1}(A^{\omega_i})\geq\varepsilon, $$
which is false.
\end{proof}

\begin{proof}[Proof of Lemma \ref{outer parallel bands}]
We reason by contradiction: if the limit is strictly positive, there exist $ \varepsilon>0 $ and a strictly
decreasing sequence $ \omega_i\searrow 0 $ such that
$$ \sup\{\vert V_\lambda(f)\vert:f\prec A^{\omega_i}\cup B^{\omega_i},\;L(f)\leq\gamma\}
\geq\varepsilon, $$
for every $ i\in\mathbb{N} $.
By definition of supremum, for every $ i\in\mathbb{N} $ there is a Lipschitz function $ f_i $ with
$ f_i\prec A^{\omega_i}\cup B^{\omega_i} $ and $ L(f_i)\leq\gamma $ such that
\begin{equation}\label{valuation far from 0}
\vert V_\lambda(f_i)\vert>\sup\{\vert V_\lambda(f)\vert:f\prec A^{\omega_i}\cup B^{\omega_i},\;
L(f)\leq\gamma\}-\frac{\varepsilon}{2}\geq\frac{\varepsilon}{2}.
\end{equation}

Since $ K_i=\supp(f_i) $ is compact, for every $ i\in\mathbb{N} $ we can write
$ \|f_i\|_\infty=\vert f_i(x_i)\vert $, for some $ x_i\in K_i $. Note that, for every $ i\in\mathbb{N} $,
$$
K_i\subset A^{\omega_i}\cup B^{\omega_i}\subset\overline{A^{\omega_1}\cup B^{\omega_1}}.
$$
By compactness, there exists $ \{x_{i_j}\}\subset\{x_i\} $ such that $ x_{i_j}\rightarrow x $ as
$ j\rightarrow\infty $, for some $ x\in\overline{A^{\omega_1}\cup B^{\omega_1}} $.

We actually have $ x\in\partial A\cup\partial B $. Indeed, if
$$ x\not\in\partial A\cup\partial B=\bigcap_{i\in\mathbb{N}}\overline{A^{\omega_i}}\cup
\bigcap_{i\in\mathbb{N}}\overline{B^{\omega_i}}=\bigcap_{i\in\mathbb{N}}\overline{A^{\omega_i}}
\cup\overline{B^{\omega_i}}=\bigcap_{i\in\mathbb{N}}\overline{A^{\omega_i}\cup B^{\omega_i}}, $$
since $ x\in\overline{A^{\omega_1}\cup B^{\omega_1}} $ there must be a number $ I\in
\mathbb{N} $ such that
$$ x\in\overline{A^{\omega_I}\cup B^{\omega_I}}\backslash\overline{A^{\omega_{I+1}}\cup
B^{\omega_{I+1}}}. $$
Now, $ \{x_i\}_{i\geq I+1}\subset A^{\omega_{I+1}}\cup B^{\omega_{I+1}} $, which implies
$$
x=\lim_{i\rightarrow\infty}x_i\in\overline{A^{\omega_{I+1}}\cup B^{\omega_{I+1}}},
$$
a contradiction. Therefore, $ x\in\partial A\cup\partial B $. Without loss of generality, assume
$ x\in\partial A $.

Let us prove that there exists $ J\in\mathbb{N} $ such that $ f_{i_j}(x)=0 $ for every $ j>J $. If this
was not the case, there would be a sequence $ \{f_{i_{j_l}}\}\subset\{f_{i_j}\} $ such that $ f_{i_{j_l}}(x)
\neq 0 $, for every $ l\in\mathbb{N} $. This would imply $ x\in A^{\omega_{i_{j_l}}}\cup
B^{\omega_{i_{j_l}}} $, but since $ x\in\partial A $ we would actually have $ x\in B^{\omega_{i_{j_l}}} $,
for every $ l\in\mathbb{N} $. In particular, $ d=d(x,B)>0 $, and then there would exist $ h\in
\mathbb{N} $ such that $ \omega_{i_{j_h}}<d $, hence $ x\not\in B^{\omega_{i_{j_h}}} $, a
contradiction.

By Lipschitz continuity, we get that for sufficiently large $ j $
$$ \|f_{i_j}\|_\infty=\vert f_{i_j}(x_{i_j})\vert=\vert f_{i_j}(x_{i_j})-f_{i_j}(x)\vert\leq\gamma\|x_{i_j}-
x\|\rightarrow 0. $$
Moreover, $ \|\nabla f_{i_j}\|\leq\gamma $ a.e., and since
$$
H^{n-1}(K_{i_j})\leq H^{n-1}(A^{\omega_{i_j}}\cup B^{\omega_{i_j}})\rightarrow 0
$$
(because of Lemma \ref{measure of outer parallel bands goes to zero}), we have $ H^{n-1}(\{f_{i_j}=0\})\rightarrow 1 $. From
Lemma \ref{gradient0} we conclude that $ \nabla f_{i_j}\rightarrow 0 $ a.e. in $ S^{n-1} $. Therefore, we have
$ f_{i_j}\overset{\tau}\rightarrow 0 $, which is a contradiction with \eqref{valuation far from 0}.
\end{proof}

\section{The control measures $\mu_{\lambda,\gamma}$'s}\label{section:controlmeasure}

We fix, for this and the following sections, a $\tau$-continuous and rotation invariant valuation $V:\Lip\longrightarrow \mathbb R$. We define its \emph{``flat component''} $ V_{flat}:\Lip\longrightarrow
\mathbb{R} $ by setting
$$ V_{flat}(f)=\int_{S^{n-1}}V(f(t)\cdot\uno)dH^{n-1}(t), $$
for $ f\in\Lip $.

Since $ \eta:\mathbb{R}\longrightarrow\mathbb{R} $ defined by $ \eta(\lambda)=V(\lambda\uno) $ is continuous, it follows that $ V_{flat} $ is still a
$ \tau$-continuous and rotation invariant valuation (see \cite{Villanueva}). This can also be verified
directly: the valuation property and the rotational invariance are quite straightforward, while
$ \tau$-continuity follows from the Dominated Convergence Theorem. In a certain way, $V_{flat}$ can be considered as the component of $V$ that admits an extension to the space of continuous functions on the sphere, $C(S^{n-1})$. In this case, an integral representation has been established in \cite{TV:18}.

Now we can define the \emph{``slope component''} of the valuation as follows:
$$
V_{slope}:=V-V_{flat}.
$$
Clearly, this satisfies
$$
V_{slope}(\lambda\cdot\uno)=V(\lambda\cdot\uno)-V_{flat}(\lambda\cdot\uno)=0,
$$
for every $ \lambda\in\mathbb{R} $. Since $ V_{slope} $ is again a $ \tau$-continuous and rotation invariant valuation, up to replacing $ V $ by $ V_{slope} $ we can and will assume, for the purpose of this paper, that $ V $ is null on constant functions.

Given $\lambda\in \mathbb R$, $ \gamma\in\mathbb{R}_+ $, we can construct a ``control measure''.
We will separately build its positive and negative part.

We start by defining an outer measure. To this end, let us begin with the definition on open sets: for every open set $G\subset S^{n-1}$, we define
\begin{equation}\label{def on open sets}
\mu^*_{\lambda, \gamma}(G)=\lim_{l\rightarrow 0^+} \sup\{V_\lambda(f):f\prec G,\,\|f\|_\infty\leq l,\,L(f)\leq\gamma\}.
\end{equation}
Note that the mapping $$l\mapsto \sup \{V_\lambda(f): f\prec G,\, \|f\|_\infty\leq l, \,
L(f)\leq \gamma\} $$ is well-defined by Lemma \ref{l:bbs}, decreasing in $l$ and lower bounded by $V_\lambda(0)=0$. Therefore, the
limit exists and $ \mu^*_{\lambda,\gamma} $ is well-defined.

To prove that $\mu^*_{\lambda,\gamma}$ is finitely subadditive on open sets, we will need a variant of McShane's extension theorem (for the original statement, see
\cite{McShane}). Here and throughout the paper, we will be using the following notation: for $f\in\Lip$, let $f^+=f\vee 0$, $f^-=f\wedge 0$. Note in particular that $f=f^+ +f^-$.

\begin{lemma}[McShane]\label{McShane}
Let $ A_1,A_2\subset B\subset S^{n-1} $ and $L>0$. Let $ \tilde{f}:A_1\cup A_2 \longrightarrow\mathbb{R} $, $ g:B\longrightarrow\mathbb{R} $ be Lipschitz functions with
 $ L(\tilde{f}),L(g)\leq L $ such that $ \tilde{f}|_{A_1}=g|_{A_1} $, $ \tilde{f}|_{A_2}=0$.
Then $ \tilde{f} $ can be extended to a Lipschitz function $ f:B\longrightarrow\mathbb{R} $ with
Lipschitz constant $ L(f)\leq L $ such that $ g^-\leq f\leq g^+ $ on $ B $ and $ \|f\|_\infty\leq
\|g\|_\infty $.
\end{lemma}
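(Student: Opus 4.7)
The strategy is to produce an arbitrary Lipschitz extension of $\tilde{f}$ to $B$ via the classical McShane theorem, and then clamp it between $g^-$ and $g^+$ so as to enforce the required pointwise bounds and the sup-norm estimate without losing Lipschitz control. This works because $g^\pm$ are themselves $L$-Lipschitz and already agree with the prescribed values of $\tilde f$ on $A_1$ and $A_2$.

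\emph{Construction.} First, apply the classical McShane extension theorem to the $L$-Lipschitz function $\tilde f$ in the metric space $(B,\|\cdot\|)$, obtaining $h:B\to\mathbb{R}$ with $h|_{A_1\cup A_2}=\tilde f$ and $L(h)\leq L$; one may take
$$h(x)=\inf_{y\in A_1\cup A_2}\bigl(\tilde f(y)+L\,\|x-y\|\bigr).$$
Then define
$$f:=(h\wedge g^+)\vee g^-.$$
Since $g^-\leq 0\leq g^+$ pointwise, the bounds $g^-\leq f\leq g^+$ are automatic, and they immediately yield $|f|\leq\max\{g^+,-g^-\}=|g|$, hence $\|f\|_\infty\leq\|g\|_\infty$.

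\emph{Verifications.} For the Lipschitz constant, note that $g^+=g\vee 0$ and $g^-=g\wedge 0$, so each has Lipschitz constant at most $L$, using the elementary fact that the pointwise $\vee$ or $\wedge$ of two $L$-Lipschitz functions is $L$-Lipschitz; combining these with $h$ via $\wedge$ and $\vee$ then gives $L(f)\leq L$. For the extension property, on $A_1$ we have $h=g$, so $h\wedge g^+=g$ (if $g(x)\geq 0$ this reads $g\wedge g=g$; if $g(x)<0$ it reads $g\wedge 0=g$), and similarly $g\vee g^-=g$; hence $f=g=\tilde f$ on $A_1$. On $A_2$ we have $h=0$, so $h\wedge g^+=0\wedge g^+=0$ (using $g^+\geq 0$) and $0\vee g^-=0$ (using $g^-\leq 0$); hence $f=0=\tilde f$ on $A_2$.

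There is no genuine obstacle: the content is essentially the classical McShane extension post-composed with a clamping step, and the entire verification is the short case split on the sign of $g$ just performed.
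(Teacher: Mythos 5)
Your proof is correct and takes essentially the same route as the paper: extend $\tilde f$ to all of $B$ via the classical McShane construction and then clamp the result between $g^-$ and $g^+$. The only cosmetic differences are that you use the $\inf$-form of the McShane extension whereas the paper uses the $\sup$-form, and you spell out the case split verifying $f|_{A_1\cup A_2}=\tilde f$, which the paper simply asserts.
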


\begin{proof}
Consider the McShane extension of $ \tilde{f} $, defined by
$$ \hat{f}(t)=\sup_{s\in A_1\cup A_2}\left[\tilde{f}(s)-L\|t-s\|\right], $$
for $ t\in B $. This function is Lipschitz continuous with $ L(\hat{f})\leq L $, and
$ \hat{f}|_{A_1\cup A_2}=\tilde{f} $.

Let $ f=(\hat{f}\vee g^-)\wedge g^+ $; then $ f $ is still Lipschitz continuous with $ L(f)\leq L $ and $ f|_{A_1\cup A_2}=\tilde{f} $. Moreover, $ f\leq g^+ $ and
$ f=(\hat{f}\wedge g^+)\vee g^-\geq g^- $. In particular, it follows that  $ \|f\|_\infty\leq \|g\|_\infty$.
\end{proof}

Despite the simplicity of the statement in the following result, its proof is probably the most technical part of the paper. Drawing a picture can be helpful for the interested reader.

\begin{lemma}\label{lemma mu subadditive}
For every $\lambda\in\mathbb{R}$, $\gamma\in\mathbb{R}_+$ and open sets $ G_1,G_2\subset S^{n-1} $, we have
$$
\mu^*_{\lambda,\gamma}(G_1\cup G_2)\leq\mu^*_{\lambda,\gamma}(G_1)+\mu^*_{\lambda,\gamma}(G_2).
$$
\end{lemma}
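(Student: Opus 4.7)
The plan is to reduce to the nonnegative case, construct Lipschitz cutoffs that split $f$ lattice-wise, and then use the valuation identity together with Lemma \ref{outer parallel bands} to control the residual terms.

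First I would reduce to $f\geq 0$: applying the valuation identity to $(f,0)$ and using $V_\lambda(0)=0$ (which holds because $V$ vanishes on constants, a standing assumption in this section) gives $V_\lambda(f)=V_\lambda(f^+)+V_\lambda(f^-)$. The analysis of $V_\lambda(f^+)$ is what I sketch below; $V_\lambda(f^-)$ should be handled by the dual construction (lower cutoffs, swapping $\vee$ and $\wedge$). For $\omega>0$ small and $l\leq\gamma\omega$ (which one may assume since $l\to 0^+$ in the definition of $\mu^*_{\lambda,\gamma}$), I would set
\[
v_j(x)=\min\!\bigl(l,\,\gamma\,(d(x,G_j^c)-\omega)^+\bigr),\quad h=v_1\vee v_2,\quad f_j=f\wedge v_j,\quad d=(f-h)^+.
\]
Each $v_j$ is $\gamma$-Lipschitz, bounded by $l$, and compactly supported in $G_j$; hence $f_j\prec G_j$ with $L(f_j)\leq\gamma$ and $\|f_j\|_\infty\leq l$. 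Applying the valuation identity first to $(f,h)$ and then to $(f_1,f_2)$ (using $f_1\vee f_2=f\wedge h$) yields
\[
V_\lambda(f)=V_\lambda(f_1)+V_\lambda(f_2)-V_\lambda(f_1\wedge f_2)+\bigl[V_\lambda(f\vee h)-V_\lambda(h)\bigr].
\]
The first two terms are bounded by $\mu^*_{\lambda,\gamma}(G_j)+\varepsilon/4$ for $l$ small enough, by the very definition of $\mu^*_{\lambda,\gamma}$.

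The key geometric observation would be that $\supp d\subset (G_1^c)^{\omega+l/\gamma}\cup (G_2^c)^{\omega+l/\gamma}$: wherever $\max(d(x,G_1^c),d(x,G_2^c))\geq\omega+l/\gamma$ one has $h(x)=l\geq f(x)$ and therefore $d(x)=0$. Combined with $L(d)\leq 2\gamma$, Lemma \ref{outer parallel bands} (applied with $A=G_1^c$, $B=G_2^c$) forces $|V_\lambda(d)|\to 0$ as $\omega,l\to 0^+$. The core reduction is then the following: if one can arrange $\supp h\cap\supp d=\emptyset$, then $h\wedge d=0$ and $h\vee d=h+d=f\vee h$, so the valuation identity applied to $(h,d)$ collapses the bracket, giving $V_\lambda(f\vee h)-V_\lambda(h)=V_\lambda(d)$. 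Similarly, arranging $\supp v_1\cap\supp v_2=\emptyset$ would force $f_1\wedge f_2\equiv 0$, killing that residual term.

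The main obstacle is precisely engineering these disjoint-supports conditions. The naive $v_j$ above satisfy neither: $\supp v_1\cap\supp v_2$ contains the deep interior of $G_1\cap G_2$, and the ramp region of $h$ overlaps $\supp d$. I would handle this by subdividing $G_1\cup G_2$ along a Lipschitz separating surface $\Sigma\subset G_1\cap G_2$, supporting $v_1$ on the $G_1$-side of $\Sigma$ and $v_2$ on the $G_2$-side. This introduces an additional defect concentrated in a thin band around $\Sigma$, but since Lemma \ref{outer parallel bands} allows any Borel set $A$, one may take $A=\Sigma$ to control it. Combining all estimates, $V_\lambda(f)\leq\mu^*_{\lambda,\gamma}(G_1)+\mu^*_{\lambda,\gamma}(G_2)+o(1)$ as $\omega,l\to 0^+$; taking the supremum over admissible $f$ and then $l\to 0^+$ yields the desired subadditivity.
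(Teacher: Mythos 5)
Your lattice-splitting framework is close in spirit to the paper's, but there is a genuine gap at precisely the place you flag as ``the main obstacle,'' and your proposed repair does not close it. The problematic term is the bracket $V_\lambda(f\vee h)-V_\lambda(h)$. To collapse it to $V_\lambda(d)$ you need $\supp h\cap\supp d=\emptyset$, and this fails even after introducing a separating surface $\Sigma$: on the ramp regions where $0<v_j<l$ (which are part of $\supp h$) the admissible $f$ may exceed $h$, so $d>0$ there too. These ramps cannot be avoided---near $\partial(G_1\cup G_2)$ any nonnegative cutoff bounded by $l$, $\gamma$-Lipschitz and supported in $G_1\cup G_2$ must pass through a band of width at least $l/\gamma$ where it is strictly between $0$ and $l$, and an admissible $f$ can be nonzero there. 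More fundamentally, a $\tau$-continuous valuation has no locality: $V_\lambda(f\vee h)-V_\lambda(h)$ is not controlled by the fact that $f\vee h$ and $h$ agree outside a thin band. The only available smallness tool, Lemma \ref{outer parallel bands}, bounds $|V_\lambda(g)|$ when $g$ \emph{itself} is supported in a thin band; reducing the bracket to such a $V_\lambda(g)$ requires an exact lattice identity, and that is exactly what the overlapping ramps preclude.

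The paper circumvents this by decomposing the near-optimal function $h$ itself---not fixed cutoffs applied to an arbitrary $f$---into pieces $h_0,h_1,h_2$ that coincide with $h$ on prescribed regions (via the adapted McShane extension of Lemma \ref{McShane}), so that $h^+=h_0^+\vee h_1^+\vee h_2^+$ and $h^-=h_0^-\wedge h_1^-\wedge h_2^-$ hold exactly, with no residual defect $d$. Inclusion-Exclusion then applies; all cross terms involving $h_0$ are small by Lemma \ref{outer parallel bands}, and there remains a single hard cross term, $V_\lambda(h_1^+\wedge h_2^+)$ (the analogue of your $V_\lambda(f_1\wedge f_2)$). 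Crucially, the paper does not try to make this vanish via disjoint supports. Instead, it proves $V_\lambda(h_1^+\wedge h_2^+)>-7\varepsilon$ by a contradiction argument: if the term were too negative, one constructs a competitor $\tilde g\prec G_1\cup G_2$ obeying the same sup and Lipschitz bounds as $h$ but with $V_\lambda(\tilde g)>\mu^*_{\lambda,\gamma}(G_1\cup G_2)+\varepsilon$, contradicting the near-optimality of $h$. This sign control via a competitor is the essential idea missing from your proposal.
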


\begin{proof}
Let $ G_1, G_2\subset S^{n-1} $ be open sets. In the following reasoning, the total space will be
$ G_1\cup G_2 $, so that for every set $ A $, its complementary $ A^c $ will denote $ (G_1\cup G_2)\setminus A $,
and $A^\omega=\{t\in G_1\cup G_2 : 0<d(t,A)<\omega\}$.

Given $ \omega>0 $, consider the sets
$$ G_1(\omega)=\{t\in G_1:d(t,G_1^c)\geq\omega\}, $$
$$ G_2(\omega)=\{t\in G_2:d(t,G_2^c)\geq\omega\}. $$

A moment of thought reveals that for every $ \omega>0 $,
$$ G_1\cup G_2=G_1(\omega)\cup G_2(\omega)\cup\left[G_1(\omega)^{2\omega}\cap
G_2(\omega)^{2\omega}\right]. $$

Fix now $ \varepsilon>0 $. Lemma \ref{outer parallel bands} implies the existence of an $ \omega>0 $ such that
\begin{equation}\label{small sup 2}
\sup\left\{\vert V_\lambda(f)\vert:f\prec\left(G_1^c\right)^{\frac{3}{2}\omega}\cup\left(G_2^c
\right)^{\frac{3}{2}\omega},\,L(f)\leq\gamma\right\}<\varepsilon.
\end{equation}
Given this $ \omega $, there exists $ 0<l<\frac{\omega}{2}\gamma $ such that
\begin{equation}\label{mu on G_1 U G_2}
\big|\mu^*_{\lambda,\gamma}(G_1\cup G_2)-\sup\left\{V_\lambda(f):f\prec G_1\cup G_2,\,
\|f\|_\infty\leq l,\,L(f)\leq\gamma\right\}\big|<\frac{\varepsilon}{2},
\end{equation}
\begin{equation}\label{mu on G_1}
\sup\left\{V_\lambda(f):f\prec G_1,\,\|f\|_\infty\leq l,\,L(f)\leq\gamma\right\}<
\mu^*_{\lambda,\gamma}(G_1)+\varepsilon,
\end{equation}
\begin{equation}\label{mu on G_2}
\sup\left\{V_\lambda(f):f\prec G_2,\,\|f\|_\infty\leq l,\,L(f)\leq\gamma\right\}<
\mu^*_{\lambda,\gamma}(G_2)+\varepsilon,
\end{equation}
by definition of $ \mu^*_{\lambda,\gamma} $. From \eqref{mu on G_1 U G_2}, there also exists
a Lipschitz function $ h\prec G_1\cup G_2 $ with $ \|h\|_\infty\leq l $, $ L(h)\leq\gamma $, such that
$$ \mu^*_{\lambda,\gamma}(G_1\cup G_2)<V_\lambda(h)+\varepsilon. $$

Let $ \tilde{h}_i:G_i(\omega)\cup G_i\left(\frac{\omega}{2}\right)^c\longrightarrow\mathbb{R} $ be defined by
$$ \tilde{h}_i(t)=\begin{cases}\begin{split}&h(t)&t&\in G_i(\omega),\\
                                               &0&t&\in G_i\left(\frac{\omega}{2}\right)^c,\end{split}\end{cases} $$
for $ i=1,2 $. Note that $ \tilde{h}_1 $ and $ \tilde{h}_2 $ are Lipschitz continuous on their
respective domains with Lipschitz constants $ L(\tilde{h}_1),L(\tilde{h}_2)\leq\gamma $. Indeed, for
$ i=1,2 $, if $ t,s\in G_i\left(\frac{\omega}{2}\right)^c $ then $ \vert\tilde{h}_i(t)-\tilde{h}_i(s)\vert
=0 $; if $ t,s\in G_i(\omega) $ then
$$
\vert\tilde{h}_i(t)-\tilde{h}_i(s)\vert\leq\gamma\|t-s\|;
$$ and if
$ t\in G_i(\omega) $, $ s\in G_i\left(\frac{\omega}{2}\right)^c $ then
$$
\frac{\vert\tilde{h}_i(t)-\tilde{h}_i(s)\vert}{\|t-s\|}\leq\frac{l}{\|t-s\|}\leq\frac{2l}{\omega}<\gamma.
$$

We can now use Lemma \ref{McShane} to extend $ \tilde{h}_i $, $ i=1,2 $, to a Lipschitz function
$ h_i:G_1\cup G_2\longrightarrow\mathbb{R} $ such that $ h^-\leq h_i\leq h^+ $, $ L(h_i)\leq
\gamma $ and $ \|h_i\|_\infty\leq l $.

Define $ \tilde{h}_0:\left[G_1(\omega)^{2\omega}\cap G_2(\omega)^{2\omega}\right]\cup
G_1\left(\frac{3}{2}\omega\right)\cup G_2\left(\frac{3}{2}\omega\right)\longrightarrow\mathbb{R} $,
$$ \tilde{h}_0(t)=\begin{cases}\begin{split}&h(t)&t&\in G_1(\omega)^{2\omega}\cap G_2(\omega)^{2\omega},\\
                                               &0&t&\in G_1\left(\frac{3}{2}\omega\right)\cup G_2\left(\frac{3}{2}
                                                            \omega\right),\end{split}\end{cases} $$
and again use McShane's extension theorem to extend this to $ h_0:
G_1\cup G_2\longrightarrow\mathbb{R} $ with Lipschitz constant $ L(h_0)\leq\gamma $.

Write $ h=h^++h^- $ and note that
$$ h^+=h_0^+\vee h_1^+\vee h_2^+, $$
$$ h^-=h_0^-\wedge h_1^-\wedge h_2^-. $$

From the valuation property and the Inclusion-Exclusion Principle, we now get
\begin{align*}
 V_\lambda(h)&=V_\lambda(h^+)+V_\lambda(h^-)=V_\lambda(h_0^+\vee h_1^+\vee h_2^+)+V_\lambda(h_0^-\wedge h_1^-\wedge h_2^-)\\
 &=V_\lambda(h_0^+)+V_\lambda(h_1^+)+V_\lambda(h_2^+)-V_\lambda(h_0^+\wedge h_1^+)-V_\lambda(h_1^+\wedge h_2^+)\\
 &-V_\lambda(h_0^+\wedge h_2^+)+V_\lambda(h_0^+\wedge h_1^+\wedge h_2^+)+V_\lambda(h_0^-)+V_\lambda(h_1^-)+
V_\lambda(h_2^-)\\
&-V_\lambda(h_0^-\vee h_1^-)-V_\lambda(h_1^-\vee h_2^-)-V_\lambda(h_0^-\vee h_2^-)+V_\lambda(h_0^-\vee h_1^-\vee h_2^-).
\end{align*}

Note that for every $f\in\{h_0^+,  h_0^+\wedge h_1^+, h_0^+\wedge h_2^+, h_0^+\wedge h_1^+\wedge h_2^+,h_0^-,h_0^-\vee h_1^-,h_0^-\vee h_2^-,h_0^-\vee h_1^-
\vee h_2^-\}$ we have $\vert V_\lambda(f)\vert<\varepsilon$. Indeed, since $ h_0^\pm=0 $ on $ G_1\left(\frac{3}{2}\omega\right)\cup G_2\left(\frac{3}{2}\omega\right) $,
it follows that $ f\prec(G_1^c)^{\frac{3}{2}\omega}\cup(G_2^c)^{\frac{3}{2}\omega} $;
moreover, $ L(f)\leq\gamma $, so from \eqref{small sup 2} we get $$ \vert V_\lambda(f)\vert<\varepsilon. $$

Therefore,
\begin{equation}\label{1inqV(h)}
\begin{aligned}
V_\lambda(h) &<V_\lambda(h_1^+)+V_\lambda(h_2^+)-V_\lambda(h_1^+\wedge h_2^+)\\
&+V_\lambda(h_1^-)+V_\lambda(h_2^-)-V_\lambda(h_1^-\vee h_2^-)+8\varepsilon\\
&=V_\lambda(h_1)+V_\lambda(h_2)-V_\lambda(h_1^+\wedge h_2^+)-V_\lambda(h_1^-\vee h_2^-)+8\varepsilon.
\end{aligned}
\end{equation}

Now, for $ i=1,2 $, we have that $ h_i\prec G_i $, $ \|h_i\|_\infty\leq l $ and $ L(h_i)\leq\gamma $,
hence \eqref{mu on G_1} and \eqref{mu on G_2} imply $ V_\lambda(h_i)<\mu^*_{\lambda,
\gamma}(G_i)+\varepsilon $, so that from \eqref{1inqV(h)} we get
\begin{equation}\label{second inequality for V(h)}
V_\lambda(h)<\mu^*_{\lambda,\gamma}(G_1)+\mu^*_{\lambda,\gamma}(G_2)-V_\lambda(h_1^+
\wedge h_2^+)-V_\lambda(h_1^-\vee h_2^-)+10\varepsilon.
\end{equation}

We claim that
$$
V_\lambda(h_1^+\wedge h_2^+),V_\lambda(h_1^-\vee h_2^-)>-7\varepsilon.
$$

In order to see this, suppose that
\begin{equation}\label{valuation very negative}
V_\lambda(h_1^+\wedge h_2^+)\leq-7\varepsilon,
\end{equation}
and define $g_1:G_1\cup G_2\longrightarrow \mathbb R$ to be the extension given by Lemma \ref{McShane} of the function
$$ g_1=\begin{cases}h_1^+&\mbox{ on }G_2(\omega)^c,\\
                                  0&\mbox{ on }G_2\left(\frac{3}{2}\omega\right).\end{cases} $$
Similarly, let  $g_2:G_1\cup G_2\longrightarrow \mathbb R$ be the extension of
$$ g_2=\begin{cases}h_2^+&\mbox{ on }G_1(\omega)^c,\\
                                  0&\mbox{ on }G_1\left(\frac{3}{2}\omega\right).\end{cases} $$

Note that, for $ i=1,2 $,
\begin{equation}\label{h_i^+}
g_i\vee(h_1^+\wedge h_2^+)=h_i^+.
\end{equation}
Indeed, for $ t\in G_2(\omega)^c $ we have
$$ g_1(t)\vee[h_1^+(t)\wedge h_2^+(t)]=h_1^+(t)\vee[h_1^+(t)\wedge h_2^+(t)]=h_1^+(t), $$
and for $ t\in G_2(\omega) $
$$ g_1(t)\vee[h_1^+(t)\wedge h_2^+(t)]=g_1(t)\vee[h_1^+(t)\wedge h^+(t)]=g_1(t)\vee h_1^+(t)=
h_1^+(t). $$
Analogously for $ i=2 $.

Let $ g:G_1\cup G_2\longrightarrow\mathbb{R} $ be the Lipschitz function defined by $ g=g_1\vee
g_2 $. From the valuation property and \eqref{h_i^+} we get

\begin{align*}
V_\lambda(g)&=V_\lambda(g_1)+V_\lambda(g_2)-V_\lambda(g_1\wedge g_2)=V_\lambda(h_1^+)+V_\lambda(g_1\wedge h_1^+\wedge h_2^+)- \\
& -V_\lambda(h_1^+\wedge h_2^+)+V_\lambda(h_2^+)+V_\lambda(g_2\wedge h_1^+\wedge h_2^+)-V_\lambda(h_1^+\wedge h_2^+)\\
&-V_\lambda(g_1\wedge g_2)=V_\lambda(h_1^+\vee h_2^+)-V_\lambda(h_1^+\wedge h_2^+)+V_\lambda(g_1\wedge h_1^+\wedge h_2^+)\\
&+V_\lambda(g_2\wedge h_1^+\wedge h_2^+)-V_\lambda(g_1\wedge g_2).
\end{align*}

Now,
$$ g_1\wedge h_1^+\wedge h_2^+\prec(G_2^c)^{\frac{3}{2}\omega}, $$
$$ g_2\wedge h_1^+\wedge h_2^+\prec(G_1^c)^{\frac{3}{2}\omega}, $$
$$ g_1\wedge g_2\prec(G_1^c)^{\frac{3}{2}\omega}\cup(G_2^c)^{\frac{3}{2}\omega}, $$
so that \eqref{small sup 2} implies, for $ i=1,2 $,
$$ \vert V_\lambda(g_i\wedge h_1^+\wedge h_2^+)\vert<\varepsilon, $$
$$ \vert V_\lambda(g_1\wedge g_2)\vert<\varepsilon. $$
Moreover,
$$ V_\lambda(h_1^+\vee h_2^+)=V_\lambda(h^+)+V_\lambda(h_0^+\wedge(h_1^+\vee h_2^+))-
V_\lambda(h_0^+), $$
where $ h_0^+\wedge(h_1^+\vee h_2^+)\prec(G_1^c)^{\frac{3}{2}\omega}\cup (G_2^c)^{\frac{3}{2}
\omega} $, hence
$$ \vert V_\lambda(h_0^+\wedge(h_1^+\vee h_2^+))\vert<\varepsilon. $$

Putting everything together, from assumption \eqref{valuation very negative} we obtain
$$ V_\lambda(g)>V_\lambda(h^+)+2\varepsilon. $$
The function $ \tilde{g}=g+h^- $ satisfies $ \tilde{g}\prec G_1\cup G_2 $, $ \|\tilde{g}\|_\infty\leq l $,
$ L(\tilde{g})\leq\gamma $ and
\begin{align*}
 V_\lambda(\tilde{g})&=V_\lambda(\tilde{g}^+)+V_\lambda(\tilde{g}^-)=V_\lambda(g)+
V_\lambda(h^-)\\
& >V_\lambda(h^+)+V_\lambda(h^-)+2\varepsilon=V_\lambda(h)+2\varepsilon\\
&>\mu_{\lambda,\gamma}^*(G_1\cup G_2)+\varepsilon,
\end{align*}
a contradiction with \eqref{mu on G_1 U G_2}.

This proves that
$$ V_\lambda(h_1^+\wedge h_2^+)>-7\varepsilon, $$
as claimed. A similar argument also shows that
$$ V_\lambda(h_1^-\vee h_2^-)>-7\varepsilon. $$

From \eqref{second inequality for V(h)} we conclude that
$$ \mu_{\lambda,\gamma}^*(G_1\cup G_2)<\mu_{\lambda,\gamma}^*(G_1)+\mu_{\lambda,
\gamma}^*(G_2)+25\varepsilon. $$
Since $\varepsilon>0$ is arbitrary, finite subadditivity follows.
\end{proof}

Now, for every $A\subset S^{n-1}$, we define
$$
\mu^*_{\lambda, \gamma}(A)=\inf\{ \mu^*_{\lambda, \gamma}(G):\,  A\subset G, \, G \mbox{ an open set }\}.
$$
This extends the previous definition \eqref{def on open sets}, i.e., the two definitions coincide on open sets.

The next lemma and proposition follow from standard reasonings.

\begin{lemma}
For every $\lambda\in\mathbb{R}$ and $\gamma\in\mathbb{R}_+$, $\mu^*_{\lambda, \gamma}$ is an outer measure on $S^{n-1}$.
\end{lemma}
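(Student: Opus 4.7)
To verify that $\mu^*_{\lambda,\gamma}$ is an outer measure I must check (i) $\mu^*_{\lambda,\gamma}(\emptyset)=0$, (ii) monotonicity, and (iii) countable subadditivity. The first two are immediate from the construction. Indeed, the only $f\in\Lip$ with $f\prec\emptyset$ is $f\equiv 0$, and $V_\lambda(0)=V(\lambda\uno)-V(\lambda\uno)=0$, so \eqref{def on open sets} yields $\mu^*_{\lambda,\gamma}(\emptyset)=0$. For monotonicity, if $A\subset B$ are both open, enlarging $A$ enlarges the class of admissible test functions $f\prec A$ in \eqref{def on open sets}; if $A\subset B$ are arbitrary, every open $G\supset B$ also contains $A$, so the infimum over open supersets of $A$ is at most that over open supersets of $B$. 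Observe also that $\mu^*_{\lambda,\gamma}\ge 0$ everywhere, since the sup in \eqref{def on open sets} includes $f\equiv 0$.

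For countable subadditivity I use the Carath\'eodory-type reduction. Given $A=\bigcup_i A_i$ and $\varepsilon>0$, for each $i$ pick an open $G_i\supset A_i$ with $\mu^*_{\lambda,\gamma}(G_i)\le\mu^*_{\lambda,\gamma}(A_i)+\varepsilon/2^i$; then $G:=\bigcup_i G_i$ is open and contains $A$, hence $\mu^*_{\lambda,\gamma}(A)\le\mu^*_{\lambda,\gamma}(G)$ by monotonicity. It therefore suffices to prove countable subadditivity on open sets, i.e., $\mu^*_{\lambda,\gamma}(G)\le\sum_i\mu^*_{\lambda,\gamma}(G_i)$, whereupon the choice of the $G_i$'s gives $\mu^*_{\lambda,\gamma}(A)\le\sum_i\mu^*_{\lambda,\gamma}(A_i)+\varepsilon$ and hence, since $\varepsilon>0$ is arbitrary, countable subadditivity on arbitrary sets.

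The latter inequality is the main step, and it combines Lemma \ref{outer parallel bands} with the finite subadditivity of Lemma \ref{lemma mu subadditive} (iterated to any finite union). Fix $\varepsilon>0$. By Lemma \ref{outer parallel bands} applied to the Borel set $A=G^c$, there exists $\omega>0$ such that every Lipschitz $f$ with $L(f)\le\gamma$ and $f\prec(G^c)^\omega$ satisfies $|V_\lambda(f)|<\varepsilon$. The set $K_\omega:=\{t\in S^{n-1}:d(t,G^c)\ge\omega\}$ is a compact subset of $G$ and therefore lies in some finite union $U_N:=G_1\cup\cdots\cup G_N$. For any test function $f\prec G$ with $\|f\|_\infty\le l$ (for $l$ sufficiently small in terms of $\omega$ and $\gamma$) and $L(f)\le\gamma$, a McShane-type decomposition in the spirit of the proof of Lemma \ref{lemma mu subadditive} (using Lemma \ref{McShane}) expresses $V_\lambda(f)$, via the Inclusion-Exclusion Principle, in terms of the valuation on pieces supported either in $U_N$ or in the boundary band $(G^c)^\omega$. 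The pieces supported in $(G^c)^\omega$ contribute at most $C\varepsilon$ by Lemma \ref{outer parallel bands}, while the pieces supported in $U_N$ are controlled by $\sum_{i=1}^N\mu^*_{\lambda,\gamma}(G_i)+C\varepsilon$ using iterated Lemma \ref{lemma mu subadditive} together with the definition of $\mu^*_{\lambda,\gamma}$ on open sets (valid because $N$ is now fixed). Altogether, $V_\lambda(f)\le\sum_{i=1}^N\mu^*_{\lambda,\gamma}(G_i)+C\varepsilon\le\sum_i\mu^*_{\lambda,\gamma}(G_i)+C\varepsilon$. Taking the supremum over $f$, letting $l\to 0^+$ and then $\varepsilon\to 0$ yields the claim.

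The main obstacle is precisely this final passage from finite to countable subadditivity on open sets: a straightforward iteration of Lemma \ref{lemma mu subadditive} forces the admissible threshold on $\|f\|_\infty$ to degrade with the number of sets, which would prevent a uniform estimate for an infinite cover. Lemma \ref{outer parallel bands} is what makes the argument go through: by absorbing the entire tail of the cover into the thin band $(G^c)^\omega$, whose contribution to $V_\lambda$ is uniformly $O(\varepsilon)$ irrespective of $l$, one can fix $N$ a priori in terms of $\varepsilon$ and thereby apply only the finite version of Lemma \ref{lemma mu subadditive}.
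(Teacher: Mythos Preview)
Your strategy matches the paper's: reduce to the open case, then use compactness together with Lemma~\ref{outer parallel bands} to pass from the countable open cover to a finite one, where Lemma~\ref{lemma mu subadditive} applies. However, you take an unnecessary detour in the last step. The paper simply observes that $G=U_N\cup(G^c)^\omega$ is a union of two \emph{open} sets (recall $(G^c)^\omega=\{t:0<d(t,G^c)<\omega\}$ is open) and applies Lemma~\ref{lemma mu subadditive}, iterated $N+1$ times, directly at the level of $\mu^*_{\lambda,\gamma}$:
\[
\mu^*_{\lambda,\gamma}(G)\le\mu^*_{\lambda,\gamma}(U_N)+\mu^*_{\lambda,\gamma}\bigl((G^c)^\omega\bigr)\le\sum_{j=1}^N\mu^*_{\lambda,\gamma}(G_{i_j})+\varepsilon,
\]
the final $\varepsilon$ coming from the definition of $\mu^*_{\lambda,\gamma}$ on the open set $(G^c)^\omega$ combined with Lemma~\ref{outer parallel bands}. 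There is no need to reopen the McShane decomposition. Your worry that iterating Lemma~\ref{lemma mu subadditive} would force the $\|\cdot\|_\infty$-threshold $l$ to degrade with $N$ is misplaced: that lemma is a statement about $\mu^*_{\lambda,\gamma}$ itself (i.e., about the limit $l\to 0^+$), not about the supremum at any fixed $l$, so it can be iterated finitely many times with no loss.
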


\begin{proof}
By definition, $\mu^*_{\lambda, \gamma}$ is monotone increasing and satisfies $\mu^*_{\lambda, \gamma}(\emptyset)=0$. To check that it is indeed an outer measure we have to prove countable subadditivity.

Let $(A_i)_{i\in \mathbb N}$ be a sequence of subsets of $S^{n-1}$. By Lemma \ref{l:bbs}, $ \mu^*_{\lambda,
\gamma}(A_i)<\infty $ for every $ i\in\mathbb{N}$. Let $\epsilon>0$. For every $i\in \mathbb N$, choose an open set $G'_i$ such that $A_i\subset G'_i$ and
$$
\mu^*_{\lambda, \gamma}(A_i)>\mu^*_{\lambda, \gamma}(G'_i)-\frac{\epsilon}{2^i}.
$$
Also, consider an open set $G'\supset \bigcup_i A_i$ such that
$$
\mu^*_{\lambda, \gamma}\left(\bigcup_i A_i\right)>\mu^*_{\lambda, \gamma}(G')-\epsilon.
$$

For every $i\in \mathbb N$, define now $G_i=G_i'\cap G'$, and $G=\bigcup_i G_i$. By monotonicity of $\mu^*_{\lambda, \gamma}$, these sets still satisfy
$$
\mu^*_{\lambda, \gamma}(A_i)>\mu^*_{\lambda, \gamma}(G_i)-\frac{\epsilon}{2^i}\quad\text{and}\quad \mu^*_{\lambda, \gamma}\left(\bigcup_i A_i\right)>\mu^*_{\lambda, \gamma}(G)-\epsilon.
$$

For the chosen $\epsilon$, Lemma \ref{outer parallel bands} guarantees the existence of  $\omega>0$ such that, for every $f\prec(G^c)^\omega$ with $L(f)\leq \gamma$, we have $|V_\lambda(f)|<\epsilon$. This implies $\mu^*_{\lambda, \gamma}((G^c)^\omega)\leq \epsilon.$

Consider now the closed set $$G(\omega)=\{t\in G:d(t, G^c)\geq \omega\}.$$
Note that $G(\omega)\subset G=\bigcup_i G_i$, and as $G(\omega)$ is compact, there exist $N\in \mathbb N$ and $i_1,\ldots,i_N\in \mathbb N$ such that
$$
G(\omega)\subset \bigcup_{j=1}^N G_{i_j}.
$$
Let $G^N= \bigcup_{j=1}^N G_{i_j}$. Then,
$$
G=G^N\cup (G^c)^\omega.
$$

Now, finite subadditivity on open sets implies that $$\mu^*_{\lambda, \gamma}(G)\leq \mu^*_{\lambda, \gamma}(G^N)+\mu^*_{\lambda, \gamma}((G^c)^\omega)\leq \sum_{j=1}^N \mu^*_{\lambda, \gamma}(G_{i_j})+\epsilon\leq \sum_{i\in \mathbb N} \mu^*_{\lambda, \gamma}( G_i) +\epsilon.$$
Since $\bigcup_i A_i\subset G'\cap\bigcup_i G_i'=G$, it follows that $$\mu^*_{\lambda, \gamma}\left(\bigcup_{i\in\mathbb{N}} A_i\right)\leq  \mu^*_{\lambda, \gamma} (G)\leq \sum_{i\in \mathbb N} \mu^*_{\lambda, \gamma} (G_i) +\epsilon\leq \sum_{i\in \mathbb N} \mu^*_{\lambda, \gamma}(A_i) +2\epsilon.$$
The fact that $\epsilon$ is arbitrary finishes the proof of countable subadditivity.
\end{proof}

Given an outer measure $\mu^*$ on $ S^{n-1} $, we recall that a set $B\subset S^{n-1}$ is \textit{$\mu^*$-measurable} if for every $A\subset S^{n-1}$, $$\mu^*(A)=\mu^*(A\cap B) + \mu^*(A\cap B^c).$$

It is well-known (cf. \cite[Theorem 1.3.4]{Cohn}) that the set of $\mu^*$-measurable sets is a $\sigma$-algebra. Moreover, $\mu^*$ restricted to this $\sigma$-algebra is a measure.

\begin{proposition}\label{medida}
The Borel $\sigma$-algebra of  $S^{n-1}$, $\Sigma_n$, is $\mu^*_{\lambda, \gamma}$-measurable for every $\lambda\in\mathbb{R}$ and $\gamma\in\mathbb{R}_+$. Therefore, if we define $\mu_{\lambda, \gamma}^+$ as the restriction of $\mu_{\lambda, \gamma}^*$ to $\Sigma_n$, then $\mu_{\lambda, \gamma}^+$ is a measure.
\end{proposition}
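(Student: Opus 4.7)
The plan is to verify the \emph{metric} outer measure property
\[
\mu^*_{\lambda, \gamma}(A_1 \cup A_2) = \mu^*_{\lambda, \gamma}(A_1) + \mu^*_{\lambda, \gamma}(A_2) \quad \text{whenever } d(A_1, A_2) > 0.
\]
Once this is in hand, Carath\'eodory's classical theorem for metric outer measures automatically places the Borel $\sigma$-algebra $\Sigma_n$ inside the family of $\mu^*_{\lambda, \gamma}$-measurable sets, and the proposition follows at once.

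The engine driving the metric property will be the additivity identity
\[
V_\lambda(f_1 + f_2) = V_\lambda(f_1) + V_\lambda(f_2)
\]
for $f_1, f_2 \in \Lip$ with disjoint supports. The plan is to establish it in three small steps. First, applying the valuation identity to $g + \lambda$ and $\lambda$, whose pointwise max and min are $g^+ + \lambda$ and $g^- + \lambda$, yields $V_\lambda(g) = V_\lambda(g^+) + V_\lambda(g^-)$ for every $g \in \Lip$. Second, if $h_1, h_2 \geq 0$ have disjoint supports then $h_1 \vee h_2 = h_1 + h_2$ and $h_1 \wedge h_2 = 0$, so the valuation identity directly gives $V_\lambda(h_1 + h_2) = V_\lambda(h_1) + V_\lambda(h_2)$, and analogously for disjoint nonpositive pairs. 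Third, disjointness of $\supp(f_1)$ and $\supp(f_2)$ forces $(f_1 + f_2)^{\pm} = f_1^{\pm} + f_2^{\pm}$, with all four pieces having pairwise disjoint supports, so combining the first identity applied to $f_1 + f_2, f_1, f_2$ with the second identity applied to the $\pm$ parts delivers the claim.

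To then deduce the metric property, fix $A_1, A_2$ with $2\delta := d(A_1, A_2) > 0$ and $\varepsilon > 0$. Pick an open set $U \supset A_1 \cup A_2$ with $\mu^*_{\lambda, \gamma}(U) < \mu^*_{\lambda, \gamma}(A_1 \cup A_2) + \varepsilon$, and set $W_i := U \cap \{t \in S^{n-1} : d(t, A_i) < \delta/2\}$ for $i = 1, 2$. These are disjoint open neighborhoods of $A_1, A_2$ respectively, with $d(\overline{W_1}, \overline{W_2}) \geq \delta$. For any $l < \gamma\delta/2$ and any admissible pair $f_i \prec W_i$ (with $\|f_i\|_\infty \leq l$ and $L(f_i) \leq \gamma$), the function $f_1 + f_2$ is supported in $W_1 \cup W_2$, has sup norm $\leq l$, and Lipschitz constant $\leq \max\{\gamma, 2l/\delta\} = \gamma$: pairs of points in a single $\overline{W_i}$ give the standard $\gamma$-bound through $f_i$, pairs with one point outside both closures reduce to the same bound using $f_i = 0$ there, and pairs spanning $\overline{W_1}$ and $\overline{W_2}$ satisfy $\|x-y\|\geq\delta$, yielding the ratio $\leq 2l/\delta \leq \gamma$. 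Hence $f_1 + f_2$ is a legitimate competitor for $W_1 \cup W_2$ at level $l$, and taking suprema in the additivity identity followed by $l \to 0^+$ gives $\mu^*_{\lambda, \gamma}(W_1) + \mu^*_{\lambda, \gamma}(W_2) \leq \mu^*_{\lambda, \gamma}(W_1 \cup W_2)$. Lemma \ref{lemma mu subadditive} supplies the reverse inequality, so equality holds; combining with monotonicity yields $\mu^*_{\lambda, \gamma}(A_1) + \mu^*_{\lambda, \gamma}(A_2) \leq \mu^*_{\lambda, \gamma}(W_1 \cup W_2) \leq \mu^*_{\lambda, \gamma}(U) < \mu^*_{\lambda, \gamma}(A_1 \cup A_2) + \varepsilon$, and $\varepsilon \to 0^+$ finishes the argument. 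The main, if mild, technical point will be the need to shrink $l$ below the threshold $\gamma\delta/2$ to keep $f_1 + f_2$ admissible; the fact that $\mu^*_{\lambda, \gamma}$ is defined as a limit $l \to 0^+$ absorbs this requirement for free.
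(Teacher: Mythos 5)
Your proof is correct, and it takes a genuinely different route from the one in the paper. You verify that $\mu^*_{\lambda,\gamma}$ is a \emph{metric} outer measure (positively separated sets are additive) and then invoke the classical Carath\'eodory criterion for metric outer measures to conclude that Borel sets are measurable. The paper instead verifies the Carath\'eodory measurability of an arbitrary open set $G$ directly: it picks an open $U\supset A$ almost realizing $\mu^*(A)$, chooses $f_1\prec U\cap G$ almost realizing $\mu^*(U\cap G)$, sets $K=\supp(f_1)$, chooses $f_2\prec U\cap K^c$ almost realizing $\mu^*(U\cap K^c)\geq\mu^*(U\cap G^c)$, and then chains inequalities using the same disjoint-support additivity $V_\lambda(f_1+f_2)=V_\lambda(f_1)+V_\lambda(f_2)$ that powers your argument. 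Both approaches thus share the same algebraic core, so the difference is really in how the Carath\'eodory condition is certified: you modularize via a known theorem, whereas the paper argues from first principles without introducing the metric-separation hypothesis. A small but real advantage of your version is that the admissibility of $f_1+f_2$ as a competitor at Lipschitz level $\gamma$ is handled transparently: the positive gap $\delta$ between $\overline{W_1}$ and $\overline{W_2}$, together with the cutoff $l<\gamma\delta/2$, gives the clean bound $L(f_1+f_2)\le\max\{\gamma,2l/\delta\}=\gamma$ by cases. In the paper's construction the two supports are disjoint but their separation is not controlled a priori, so the analogous assertion $L(f_1+f_2)\le\gamma$ is stated without justification; your route sidesteps this entirely. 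One slightly cleaner point: in the final chain $\mu^*(A_1)+\mu^*(A_2)\le\mu^*(W_1)+\mu^*(W_2)\le\mu^*(W_1\cup W_2)\le\mu^*(U)<\mu^*(A_1\cup A_2)+\varepsilon$, the appeal to Lemma~\ref{lemma mu subadditive} for the reverse inequality $\mu^*(W_1\cup W_2)\le\mu^*(W_1)+\mu^*(W_2)$ is actually superfluous; you only need the inequality you proved plus monotonicity.
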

\begin{proof}

It is clearly enough to show that every open set $G\subset S^{n-1}$ is $\mu^*_{\lambda, \gamma}$-measurable. It follows from the subadditivity of $\mu_{\lambda, \gamma}^*$ that it suffices to check that, for every $A\subset S^{n-1}$,

$$\mu^*_{\lambda, \gamma}(A)\geq \mu^*_{\lambda, \gamma}(A\cap G) + \mu^*_{\lambda, \gamma}(A\cap G^c).$$

We fix $A\subset S^{n-1}$ and $\epsilon >0$. From Lemma \ref{l:bbs}, we have that $\mu_{\lambda, \gamma}^*(A)<\infty$. It follows from the definition of $\mu^*_{\lambda, \gamma}$ that there exists an open set $U\supset A$ such that
$$
\mu^*_{\lambda, \gamma}(U)\leq \mu^*_{\lambda, \gamma}(A)+\epsilon.
$$

As we mentioned before, for every open set $U\subset S^{n-1}$, the mapping $$l\mapsto \sup \{V_\lambda(f): f\prec U, \, \|f\|_\infty\leq l, \,L(f)\leq \gamma\} $$ is decreasing in $l$ and lower bounded by $0$. Let $l_0$ be such that
$$
\sup \{V_\lambda(f):f\prec U, \, \|f\|_\infty\leq l_0, \,L(f)\leq \gamma\} <\mu^*_{\lambda, \gamma}(U)+\epsilon.
$$

Now, $U\cap G$ is an open set, hence we can choose $f_1\prec U\cap G$ such that $\|f_1\|_\infty\leq l_0$, $L(f_1)\leq \gamma$ and
$$
\mu^*_{\lambda, \gamma}(U\cap G) \leq V_\lambda(f_1)+ \epsilon.
$$

We consider the compact set $K=\supp(f_1)\subset U\cap G$. Clearly, we have $U\cap G^c\subset U\cap K^c$, the latter set being open. Choose now $f_2$ with $f_2\prec U\cap K^c$, $\|f_2\|_\infty\leq l_0$, $L(f_2)\leq \gamma$ such that
$$
\mu^*_{\lambda,\gamma}(U\cap K^c)\leq V_\lambda(f_2)+\epsilon.
$$
Note that since
$$
\supp(f_2)\subset K^c=\left(\supp(f_1) \right)^c,
$$
we have that $f_1$ and $f_2$ have disjoint supports, both of them contained in $U$. Therefore, the function $ g=f_1+f_2 $ satisfies
$ g\prec U $, $ \|g\|_\infty\leq l_0 $ and $ L(g)\leq\gamma $. Moreover,
$$ f_1^+\wedge f_2^+=f_1^-\vee f_2^-=0 $$
and
$$ f_1^+\vee f_2^+=g^+, $$
$$ f_1^-\wedge f_2^-=g^-, $$
hence
\begin{align*}
V_\lambda(f_1)+V_\lambda(f_2)&=V_\lambda(f_1^+)+V_\lambda(f_1^-)+V_\lambda(f_2^+)+V_\lambda(f_2^-)\\
&=V_\lambda(f_1^+\vee f_2^+)+V_\lambda(f_1^-\wedge f_2^-)\\
&=V_\lambda(g^+)+V_\lambda(g^-)=V_\lambda(g).
\end{align*}

This implies
\begin{align*}\mu^*_{\lambda, \gamma}(A)& \geq \mu^*_{\lambda, \gamma}(U)-\epsilon \geq
\sup \{V_\lambda(f):f\prec U, \, \|f\|_\infty\leq l_0, \,L(f)\leq \gamma\}-2\epsilon  \\
&\geq V_\lambda(g)-2\epsilon=  V_\lambda(f_1)+ V_\lambda(f_2)-2\epsilon \\
&\geq  \mu^*_{\lambda, \gamma} (U\cap G) + \mu^*_{\lambda, \gamma}(U\cap K^c)-4 \epsilon\\ &\geq  \mu^*_{\lambda, \gamma}(U\cap G) + \mu^*_{\lambda, \gamma}(U\cap G^c)-4\epsilon \\
& \geq  \mu^*_{\lambda, \gamma}(A\cap G) + \mu^*_{\lambda, \gamma}(A\cap G^c)-4\epsilon.
\end{align*}
Since $\varepsilon$ is arbitrary, the conclusion follows.
\end{proof}

We could now use an analogous argument to construct measures $\mu^-_{\lambda, \gamma}$'s, whose definition on open sets would be given by
\begin{eqnarray*}
\mu_{\lambda,\gamma}^-(G)&=&\lim_{l\rightarrow 0^+}\sup\{-V_\lambda(f):f\prec G,\,\|f\|_\infty
\leq l,\,L(f)\leq\gamma\}\\
&=&\lim_{l\rightarrow 0^+}-\inf\{V_\lambda(f):f\prec G,\,\|f\|_\infty\leq l,\,L(f)\leq\gamma\}.
\end{eqnarray*}

The measures defined by
$$
\mu_{\lambda,\gamma}:=\mu^+_{\lambda, \gamma}+\mu^-_{\lambda,\gamma}
$$
``control'' the absolute value of the valuation on open sets: for every open set $ G\subset S^{n-1} $ we have
\begin{equation}\label{control inequality}
\lim_{l\rightarrow 0^+}\sup\{\vert V_\lambda(f)\vert:f\prec G,\|f\|_\infty\leq l,L(f)\leq\gamma\}\leq\mu_{\lambda,\gamma}(G).
\end{equation}
Indeed, $ \vert V_\lambda(f)\vert $ is either $ V_\lambda(f) $ or $ -V_\lambda(f) $, which are respectively controlled by $ \mu_{\lambda,
\gamma}^+(G) $ or $ \mu_{\lambda,\gamma}^-(G) $.

\begin{observation}\label{o:ri}
It follows directly from Lemma \ref{l:bbs} that for every $\lambda\in\mathbb R$, $\gamma\in \mathbb R_+$, $\mu_{\lambda,\gamma}$ is a finite measure. It is also clear, from the rotational invariance of the valuation, that the measure $\mu_{\lambda, \gamma}$ is rotation invariant. Therefore, for every $\lambda\in \mathbb R$, $\gamma\in \mathbb R_+$, there exists a number $\theta(\lambda, \gamma)\in\mathbb{R}_+$ such that $$\mu_{\lambda, \gamma}=\theta(\lambda, \gamma) H^{n-1}.$$
\end{observation}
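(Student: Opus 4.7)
The plan is to verify the three claims of the observation in order: finiteness of $\mu_{\lambda,\gamma}$, rotation invariance, and the reduction to a multiple of $H^{n-1}$.

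First, to see that $\mu_{\lambda,\gamma}$ is finite, it suffices to bound $\mu^*_{\lambda,\gamma}(S^{n-1})$ and $\mu^-_{\lambda,\gamma}(S^{n-1})$, i.e. to produce a uniform bound on $|V_\lambda(f)|$ over the class
$$\{f\in\Lip : \|f\|_\infty\le l,\ L(f)\le\gamma\}$$
for some (fixed) $l>0$. For such $f$ one has $\norm{f+\lambda\uno}\le\max\{l+|\lambda|,\gamma\}=:M$, so $f+\lambda\uno\in\LipM$, and Lemma \ref{l:bbs} yields a constant $C_M$ with $|V(f+\lambda\uno)|\le C_M$. Hence $|V_\lambda(f)|\le C_M+|V(\lambda\uno)|$, which bounds the sups appearing in the definitions of $\mu^\pm_{\lambda,\gamma}(S^{n-1})$ and gives $\mu_{\lambda,\gamma}(S^{n-1})<\infty$.

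Second, for rotation invariance, fix $\varphi\in\mathbf{O}(n)$ and an open set $G\subset S^{n-1}$. Since $\varphi$ is an isometry of $S^{n-1}$, the map $f\mapsto f\circ\varphi^{-1}$ is a bijection of $\{f\prec G : \|f\|_\infty\le l, L(f)\le\gamma\}$ onto $\{f\prec \varphi(G) : \|f\|_\infty\le l, L(f)\le\gamma\}$. Moreover, rotation invariance of $V$ together with $V(\lambda\uno\circ\varphi^{-1})=V(\lambda\uno)$ gives $V_\lambda(f\circ\varphi^{-1})=V_\lambda(f)$. Therefore the sup defining $\mu^*_{\lambda,\gamma}(G)$ in \eqref{def on open sets} equals that defining $\mu^*_{\lambda,\gamma}(\varphi(G))$, so $\mu^*_{\lambda,\gamma}$ is rotation invariant on open sets. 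Rotation invariance then propagates to general subsets through the outer-regular formula $\mu^*_{\lambda,\gamma}(A)=\inf\{\mu^*_{\lambda,\gamma}(G):A\subset G\text{ open}\}$, and in particular to Borel sets; the same argument applies to $\mu^-_{\lambda,\gamma}$, hence to $\mu_{\lambda,\gamma}$.

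Finally, combining the two preceding points, $\mu_{\lambda,\gamma}$ is a finite rotation invariant Borel measure on $S^{n-1}$. The normalized Hausdorff measure $H^{n-1}$ is the unique $\mathbf{O}(n)$-invariant probability measure on $S^{n-1}$ (this is the standard uniqueness of Haar measure on the homogeneous space $S^{n-1}=\mathbf{O}(n)/\mathbf{O}(n-1)$), so necessarily $\mu_{\lambda,\gamma}=\theta(\lambda,\gamma)\,H^{n-1}$ with $\theta(\lambda,\gamma):=\mu_{\lambda,\gamma}(S^{n-1})\in\mathbb{R}_+$. No step here presents a serious obstacle: the only mildly delicate point is propagating rotation invariance from open sets to Borel sets, which is handled directly via the infimum formula used to define $\mu^*_{\lambda,\gamma}$ on arbitrary subsets.
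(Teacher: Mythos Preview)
Your proposal is correct and follows exactly the reasoning the paper indicates: finiteness from Lemma~\ref{l:bbs}, rotation invariance directly from that of $V$ (propagated via \eqref{def on open sets} and the outer-regular definition), and the conclusion from uniqueness of the $\mathbf{O}(n)$-invariant probability on $S^{n-1}$. The paper presents this as an observation without a separate proof, so you have simply supplied the routine details of the same argument.
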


\begin{lemma}\label{l:density unif bounded}
Let $ \lambda_0,\gamma_0\in\mathbb{R}_+ $. Then
$$ \sup \{\theta(\lambda,\gamma):\vert\lambda\vert\leq\lambda_0, 0\leq\gamma\leq\gamma_0\}
<\infty. $$
\end{lemma}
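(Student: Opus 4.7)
The plan is to unwind the definitions and reduce the statement to a direct application of Lemma \ref{l:bbs}. Recall that by Observation \ref{o:ri} and the normalization $H^{n-1}(S^{n-1})=1$, we have $\theta(\lambda,\gamma)=\mu_{\lambda,\gamma}(S^{n-1})=\mu^+_{\lambda,\gamma}(S^{n-1})+\mu^-_{\lambda,\gamma}(S^{n-1})$. Since $S^{n-1}$ is itself open, we may compute these using the original definition \eqref{def on open sets} (and its $-$ analogue).

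First, I would use the monotonicity in $l$ of the suprema appearing in the definitions of $\mu^\pm_{\lambda,\gamma}$ on open sets. Since the supremum over $f\prec S^{n-1}$ with $\|f\|_\infty\leq l$, $L(f)\leq\gamma$ is decreasing in $l$ and the $\mu^\pm_{\lambda,\gamma}$'s are defined as the limit as $l\to 0^+$, we get
\begin{equation*}
\mu^\pm_{\lambda,\gamma}(S^{n-1})\leq\sup\bigl\{\pm V_\lambda(f):f\in\Lip,\,\|f\|_\infty\leq 1,\,L(f)\leq\gamma\bigr\}.
\end{equation*}

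Next, I would observe that for every such $f$ with $|\lambda|\leq\lambda_0$ and $\gamma\leq\gamma_0$, both $f+\lambda\uno$ and $\lambda\uno$ lie in $\LipM$ for $M:=\max\{\lambda_0+1,\gamma_0\}$. Indeed, $\|f+\lambda\uno\|_\infty\leq 1+\lambda_0$ and $L(f+\lambda\uno)=L(f)\leq\gamma_0$, so $\|f+\lambda\uno\|_{\textnormal{Lip}}\leq M$; similarly $\|\lambda\uno\|_{\textnormal{Lip}}=|\lambda|\leq M$. By Lemma \ref{l:bbs}, the $\tau$-continuous valuation $V$ is bounded on $\LipM$, so there is a constant $C=C(\lambda_0,\gamma_0)$ with $|V(g)|\leq C$ for all $g\in\LipM$. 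Hence
\begin{equation*}
|V_\lambda(f)|=|V(f+\lambda\uno)-V(\lambda\uno)|\leq 2C,
\end{equation*}
uniformly in the admissible $f$, $\lambda$ and $\gamma$.

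Combining these two steps yields $\mu^\pm_{\lambda,\gamma}(S^{n-1})\leq 2C$, and therefore $\theta(\lambda,\gamma)\leq 4C$ for all $|\lambda|\leq\lambda_0$, $0\leq\gamma\leq\gamma_0$. There is no real obstacle here: the whole point is that Lemma \ref{l:bbs} already provides uniform control of $V$ on norm-bounded sets, and the definition of $\mu^\pm_{\lambda,\gamma}$ as a supremum of $V_\lambda(f)$ over a family of functions whose norms are bounded in terms of $\lambda_0$ and $\gamma_0$ transports this control to $\theta$.
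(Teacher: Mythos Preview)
Your proof is correct and follows essentially the same approach as the paper: both arguments observe that the functions $f+\lambda\uno$ appearing in the definition of $\mu^\pm_{\lambda,\gamma}(S^{n-1})$ lie in $\LipM$ with $M=\max\{1+\lambda_0,\gamma_0\}$, and then invoke Lemma~\ref{l:bbs}. Your version is slightly more streamlined in that it bounds $\mu^\pm_{\lambda,\gamma}(S^{n-1})$ directly via the monotonicity of the defining supremum (evaluating at $l=1$), whereas the paper wraps the same computation in a proof by contradiction with $\varepsilon$-approximations of the suprema; but the substance is identical.
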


\begin{proof}
If this is not the case, for every $ C\in\mathbb{R} $ there exist $ \lambda_C $, $ \gamma_C $ with
$ \vert\lambda_C\vert\leq\lambda_0 $, $ 0\leq\gamma_C\leq\gamma_0 $, such that
$$ \theta(\lambda_C,\gamma_C)>C. $$
Remembering our normalization $ H^{n-1}(S^{n-1})=1 $, this implies
$$ \mu_{\lambda_C,\gamma_C}(S^{n-1})>C. $$

By definition of $ \mu_{\lambda_C,\gamma_C}^+ $, $ \mu_{\lambda_C,\gamma_C}^- $ we
have that for every $ \varepsilon>0 $ there exists $ 0<l<1 $ such that
$$ \big\vert\mu_{\lambda_C,\gamma_C}^+(S^{n-1})-\sup\left\{V_{\lambda_C}(f):\|f\|_\infty\leq l,
L(f)\leq\gamma_C\right\}\big\vert<\frac{\varepsilon}{4}, $$
$$ \big\vert\mu_{\lambda_C,\gamma_C}^-(S^{n-1})+\inf\left\{V_{\lambda_C}(f):\|f\|_\infty\leq l,
L(f)\leq\gamma_C\right\}\big\vert<\frac{\varepsilon}{4}. $$

From $ \mu_{\lambda_C,\gamma_C}$'s definition and the triangular inequality we get
$$ \big\vert\mu_{\lambda_C,\gamma_C}(S^{n-1})-\sup V_{\lambda_C}(f)+\inf V_{\lambda_C}(f)
\big\vert<\frac{\varepsilon}{2}. $$
In particular,
$$ C<\mu_{\lambda_C,\gamma_C}(S^{n-1})<\sup V_{\lambda_C}(f)-\inf V_{\lambda_C}(f)+
\frac{\varepsilon}{2}. $$

Now, there exist $ f_C,g_C\in\Lip $ such that $ \|f_C\|_\infty,\|g_C\|_\infty\leq l $ and
$ L(f_C), L(g_C)\leq\gamma_C $ satisfying
$$ \sup V_{\lambda_C}(f)<V_{\lambda_C}(f_C)+\frac{\varepsilon}{4}, $$
$$ \inf V_{\lambda_C}(f)>V_{\lambda_C}(g_C)-\frac{\varepsilon}{4}, $$
which in turn implies
\begin{equation}\label{inequality}
C<V_{\lambda_C}(f_C)-V_{\lambda_C}(g_C)+\varepsilon.
\end{equation}

The functions $ f_C+\lambda_C,g_C+\lambda_C\in\Lip $ satisfy
$$ \|f_C+\lambda_C\|_\infty,\|g_C+\lambda_C\|_\infty\leq l+\vert\lambda_C\vert\leq
1+\lambda_0, $$
$$ L(f_C+\lambda_C)=L(f_C)\leq\gamma_C\leq\gamma_0, $$
$$ L(g_C+\lambda_C)=L(g_C)\leq\gamma_C\leq\gamma_0, $$
so that
$$ \norm{f_C+\lambda_C},\norm{g_C+\lambda_C}\leq\Lambda:=\max\{1+\lambda_0,
\gamma_0\}. $$

From Lemma \ref{l:bbs} and what we have just seen, there exists $ M>0 $
independent of $ C $ such that
$$ \vert V_{\lambda_C}(f_C)\vert,\vert V_{\lambda_C}(g_C)\vert\leq M. $$
Inequality \eqref{inequality} then implies
$$ C<2M+\varepsilon; $$
since $ C $ is arbitrary, this is a contradiction.
\end{proof}

\section{The representing measures $\nu_g$'s on $S^1$}\label{section:rep measure}

So far we have been able to construct a family of Borel measures on $S^{n-1}$ which control the valuation on functions with bounded Lipschitz constant and uniform norm. In this section, we will define another measure which will allow us to provide an integral representation for the valuation on piecewise linear functions. However, our techniques only work for the one-dimensional sphere $S^1$. For convenience, throughout this and the forthcoming sections, we will identify functions on $S^1$ with functions $f:[0,2\pi]\longrightarrow\mathbb R$ such that $f(0)=f(2\pi)$.

Let us show first that, if a valuation is null on constant functions, then it does not ``see'' the flat regions of any function. More precisely, given $\gamma\in\mathbb{R}_+$, $0<d\leq\frac{\pi}{2}$, $0\leq \ell\leq2(\pi-d)$ and $t_0\in[0,2\pi]$, consider the function $h_{\gamma,d,\ell,t_0}:[0,2\pi]\longrightarrow \mathbb R$ given by
$$
h_{\gamma,d,\ell,t_0}(t)=\left\{
\begin{array}{ccl}
\gamma(t-t_0)  &   & \text{for }t\in(t_0,t_0+d],  \\
\gamma d  &   &   \text{for }t\in(t_0+d,t_0+d+\ell],  \\
\gamma(t_0+2d+\ell-t)  &   & \text{for }t\in(t_0+d+\ell,t_0+2d+\ell],  \\
0  &   & \text{elsewhere},
\end{array}
\right.
$$
where the intervals of definition have to be considered modulo $2\pi$.

\begin{center}
\begin{tikzpicture}
\draw (0,0) -- (11,0);
\draw (0,0) -- (0,3);
\filldraw[black](0,2.5)node[anchor=west]{$h_{\gamma,d,\ell,t_0}$};
\draw[thick,blue] (0,0) -- (1.5,0);
\filldraw[black](1.5,0)circle(1pt)node[anchor=north]{$\scriptstyle t_{0}$};
\draw[thick,blue] (1.5,0) -- (2,1.5);
\draw[dashed] (2,1.5) -- (2,0);
\filldraw[black](2,0)circle(1pt)node[anchor=north west]{$\scriptstyle t_0+d$};
\draw[thick,blue] (2,1.5) -- (8,1.5);
\draw[dashed] (8,1.5) -- (8,0);
\draw[thick,blue] (8.5,0) -- (8,1.5);
\filldraw[black](8,0)circle(1pt)node[anchor=north]{$\scriptstyle t_0+d+\ell\,\,\,\,\,\,\,\,\,\,\,\,\,\,\,\,\,$};
\filldraw[black](8.5,0)circle(1pt)node[anchor=north]{$\scriptstyle \,\,\,\,\,\,\,\,\,\,\,\,\,\,\,\,\,\,t_0+2d+\ell$};
\draw[thick,blue] (8.5,0) -- (10,0);
\filldraw[black](10,0)circle(1pt)node[anchor=north]{$2\pi$};
\end{tikzpicture}
\end{center}
We then have the following result.
\begin{lemma}\label{l:flatszero}
Suppose $ W:\lip\longrightarrow\mathbb{R} $ is a $ \tau$-continuous, rotation invariant valuation with $ W(\lambda)=0 $ for every $ \lambda\in\mathbb{R} $. Given $\gamma\in\mathbb{R}_+$ and $0<d\leq\frac{\pi}{2}$, for every $0\leq \ell\leq2(\pi-d)$ and every $t_0\in[0,2\pi]$ we have that
$$
W(h_{\gamma,d,\ell,t_0})=W(h_{\gamma,d,0,0}).
$$
\end{lemma}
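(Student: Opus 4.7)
Let $\phi(\ell):=W(h_{\gamma,d,\ell,0})$; by rotation invariance of $W$, the lemma reduces to showing that $\phi$ is constant on $[0,2(\pi-d)]$.

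The first step is a Jensen-type functional equation. For $0\le s\le\min\{\ell,\,2(\pi-d)-\ell\}$, so that the rotated copy $h_{\gamma,d,\ell,s}$ has not wrapped around $S^1$, a direct pointwise computation shows that
$$
h_{\gamma,d,\ell,0}\vee h_{\gamma,d,\ell,s}=h_{\gamma,d,\ell+s,0},\qquad h_{\gamma,d,\ell,0}\wedge h_{\gamma,d,\ell,s}=h_{\gamma,d,\ell-s,s}
$$
(the shifted copy extends the plateau forward by $s$; the pointwise minimum is the trapezoid whose plateau is the intersection of the two original plateaus). The valuation identity, combined with rotation invariance and the hypothesis $W(\lambda\uno)=0$, yields the Jensen equation
$$
\phi(\ell+s)+\phi(\ell-s)=2\phi(\ell).
$$
Since $\ell\mapsto h_{\gamma,d,\ell,0}$ is $\tau$-continuous, $\phi$ is continuous, and any continuous solution of Jensen's equation on an interval is affine: $\phi(\ell)=a+b\ell$ on $[0,2(\pi-d)]$.

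The second step is a \emph{half-circle trick} that pins down the slope $b$. The hypothesis $d\le\pi/2$ makes both $\ell=\pi$ and the shift $s=\pi$ admissible. The plateau $[d,d+\pi]$ of $h_{\gamma,d,\pi,0}$ and the wraparound plateau $[d+\pi,2\pi]\cup[0,d]$ of $h_{\gamma,d,\pi,\pi}$ together tile $S^1$, so
$$
h_{\gamma,d,\pi,0}\vee h_{\gamma,d,\pi,\pi}\equiv \gamma d\,\uno.
$$
Tracking the wraparound carefully, the supports intersect in the two disjoint arcs $[0,2d]$ and $[\pi,\pi+2d]$, on each of which the pointwise minimum rises with slope $\gamma$ for an interval of length $d$ and then falls with slope $-\gamma$ for a further interval of length $d$; hence
$$
h_{\gamma,d,\pi,0}\wedge h_{\gamma,d,\pi,\pi}=h_{\gamma,d,0,0}+h_{\gamma,d,0,\pi},
$$
a sum of two tents with disjoint supports. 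Using $W(\gamma d\,\uno)=0$, rotation invariance, and disjoint-support additivity (a standard consequence of the valuation property together with $W(0)=0$), the valuation identity applied to $h_{\gamma,d,\pi,0}$ and $h_{\gamma,d,\pi,\pi}$ becomes
$$
2\phi(\pi)=W(h_{\gamma,d,\pi,0})+W(h_{\gamma,d,\pi,\pi})=W(\gamma d\,\uno)+2\phi(0)=2\phi(0).
$$
Thus $\phi(\pi)=\phi(0)$, and combined with the affine form of $\phi$ this forces $b\pi=0$. Hence $b=0$ and $\phi\equiv\phi(0)=W(h_{\gamma,d,0,0})$, which is the claim.

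The expected main obstacle is the second step: Jensen by itself constrains $\phi$ only to a one-parameter affine family, and an auxiliary non-Jensen identity is needed to kill the slope. The half-circle configuration $\ell=s=\pi$, admissible precisely thanks to the bound $d\le\pi/2$, is the one place where two antipodally rotated trapezoids tile $S^1$ through their plateaus; this is what produces a constant on the $\vee$-side and a pair of easily-valued tents on the $\wedge$-side. The main computational delicacy is the wraparound bookkeeping needed to identify the $\vee$ as $\gamma d\uno$ and the $\wedge$ as a disjoint union of two tents.
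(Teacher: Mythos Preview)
Your proof is correct and reaches the same conclusion as the paper by a genuinely different route. The paper argues set-theoretically: it shows the set $L=\{\ell:\phi(\ell)=\phi(0)\}$ contains $\pi$ via the half-circle identity, then by an induction (halving the plateau) that $\pi/2^m\in L$ for all $m$, then that $L$ is closed under addition, and finally invokes density of the dyadic multiples of $\pi$ together with $\tau$-continuity. You instead derive the functional equation $\phi(\ell+s)+\phi(\ell-s)=2\phi(\ell)$ directly from the overlap of two equal-plateau trapezoids, use continuity to conclude $\phi$ is affine, and then apply the same half-circle identity once to pin the slope to zero. Both arguments hinge on the identity $h_{\gamma,d,\pi,0}\vee h_{\gamma,d,\pi,\pi}=\gamma d\,\uno$ with $h_{\gamma,d,\pi,0}\wedge h_{\gamma,d,\pi,\pi}=h_{\gamma,d,0,0}\vee h_{\gamma,d,0,\pi}$, available precisely because $d\le\pi/2$; your approach packages the remaining work more economically (one general identity plus one special value, instead of induction, additivity, and density). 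One small remark: the hypothesis $W(\lambda\uno)=0$ is not actually used in your Jensen step---only rotation invariance and the valuation property are needed there---it enters only in the half-circle step, both to kill $W(\gamma d\,\uno)$ and, via $W(0)=0$, to justify disjoint-support additivity.
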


\begin{proof}
Since $W$ is rotation invariant it is clear that
$$
W(h_{\gamma,d,\ell,t_0})=W(h_{\gamma,d,\ell,0}),
$$
for every $0\leq \ell\leq2(\pi-d)$ and $t_0\in[0,2\pi]$, so it is enough to prove that
$$
W(h_{\gamma,d,\ell,0})=W(h_{\gamma,d,0,0}),
$$
for every $0\leq \ell\leq2(\pi-d)$.

Let us consider the set
$$
L=\{\ell\in[0,2(\pi-d)]: W(h_{\gamma,d,\ell,0})=W(h_{\gamma,d,0,0})\}.
$$
We claim that $\pi/2^m\in L$ for every $ m\in\mathbb{N}_0=\mathbb{N}\cup\{0\} $. We prove this by induction: let $ m=0 $ and note that
$$
h_{\gamma,d,\pi,0}\vee h_{\gamma,d,\pi,\pi}=\gamma d\quad \text{and}\quad h_{\gamma,d,\pi,0}\wedge h_{\gamma,d,\pi,\pi}=h_{\gamma,d,0,0}\vee h_{\gamma,d,0,\pi}.
$$
Since $0<d\leq\frac{\pi}{2}$, $h_{\gamma,d,0,0}$ and $h_{\gamma,d,0,\pi}$ have disjoint support. Hence, using rotation invariance, the valuation property and the fact that $W(\lambda)=0$ for every $\lambda$, it follows that
\begin{align*}
2W(h_{\gamma,d,\pi,0})&=W(h_{\gamma,d,\pi,0})+W( h_{\gamma,d,\pi,\pi})\\
&=W(h_{\gamma,d,\pi,0}\vee h_{\gamma,d,\pi,\pi})+W(h_{\gamma,d,\pi,0}\wedge h_{\gamma,d,\pi,\pi})\\
&=W(\gamma d)+ W(h_{\gamma,d,0,0}\vee h_{\gamma,d,0,\pi})\\
&=2W(h_{\gamma,d,0,0}).
\end{align*}
Therefore, $\pi\in L$. Suppose now that $\pi/2^m\in L$ for some $m\in\mathbb N$. Observe that
$$
h_{\gamma,d,\frac{\pi}{2^{m+1}},0}\vee h_{\gamma,d,\frac{\pi}{2^{m+1}},\frac{\pi}{2^{m+1}}}=h_{\gamma,d,\frac{\pi}{2^m},0},\quad \text{and}
$$
$$
h_{\gamma,d,\frac{\pi}{2^{m+1}},0}\wedge h_{\gamma,d,\frac{\pi}{2^{m+1}},\frac{\pi}{2^{m+1}}}=h_{\gamma,d,0,\frac{\pi}{2^{m+1}}}.
$$
Using rotation invariance, the valuation property and the induction hypothesis, we have that
\begin{align*}
2W\!\left(h_{\gamma,d,\frac{\pi}{2^{m+1}},0}\right)&=W\!\left(h_{\gamma,d,\frac{\pi}{2^{m+1}},0}\right)+W\!\left( h_{\gamma,d,\frac{\pi}{2^{m+1}},\frac{\pi}{2^{m+1}}}\right)\\
&=W\!\left(h_{\gamma,d,\frac{\pi}{2^{m+1}},0}\vee h_{\gamma,d,\frac{\pi}{2^{m+1}},\frac{\pi}{2^{m+1}}}\right)\\
&+W\!\left(h_{\gamma,d,\frac{\pi}{2^{m+1}},0}\wedge h_{\gamma,d,\frac{\pi}{2^{m+1}},\frac{\pi}{2^{m+1}}}\right)\\
&=W\!\left(h_{\gamma,d,\frac{\pi}{2^m},0}\right)+ W\!\left(h_{\gamma,d,0,\frac{\pi}{2^{m+1}}}\right)\\
&=2W(h_{\gamma,d,0,0}).
\end{align*}
Thus, $\pi/2^{m+1}\in L$ as desired.

Now, we claim that if $\ell_1,\ell_2\in L$ with $\ell_1+\ell_2\leq2(\pi-d)$, then $\ell_1+\ell_2\in L$. Indeed, note that
$$
h_{\gamma,d,\ell_1,0}\vee h_{\gamma,d,\ell_2,\ell_1}=h_{\gamma,d,\ell_1+\ell_2,0},\quad \text{and}
$$
$$
h_{\gamma,d,\ell_1,0}\wedge h_{\gamma,d,\ell_2,\ell_1}=h_{\gamma,d,0,\ell_1}.
$$
Thus, as above, we obtain
$$
W(h_{\gamma,d,\ell_1,0})+W(h_{\gamma,d,\ell_2,0})=W(h_{\gamma,d,\ell_1+\ell_2,0})+W(h_{\gamma,d,0,0}),
$$
and then it follows that $\ell_1+\ell_2\in L$ whenever $\ell_1,\ell_2\in L$.

This proves that $L$ contains every number of the form $\frac{k\pi}{2^m}$, with $k,m\in\mathbb N$, belonging to the interval $[0,2(\pi-d)]$. Finally, since these are dense in $L$, for an arbitrary $\ell\in[0,2(\pi-d)]$ we can consider sequences of natural numbers $(k_i)_{i\in\mathbb N}$, $(m_i)_{i\in\mathbb N}$ such that
$$
\left|\ell-\frac{k_i\pi}{2^{m_i}}\right|\underset{i\rightarrow\infty}\longrightarrow 0.
$$
It is straightforward to check that
$$
h_{\gamma,d,\frac{k_i\pi}{2^{m_i}},0}\overset{\tau}\longrightarrow h_{\gamma,d,\ell,0},
$$
as $i\rightarrow\infty$. Therefore, by the continuity of $W$ we have that $\ell\in L$. Thus, $L=[0,2(\pi-d)]$ and the proof is finished.
\end{proof}

Consider the algebra $ \mathcal{A}_1 $ defined by
$$
\mathcal{A}_1=\left\{\bigcup_{j=1}^m I_j:m\in\mathbb{N},\,I_j=(a_j,b_j]\subset[0,\pi],\,I_j\cap I_k=\emptyset\mbox{ for }j\neq k\right\}.
$$
This coincides with the algebra generated by the semi-open intervals of $ [0,\pi] $.

Recall that $ \mathcal{L}(S^1) $ denotes the set of piecewise linear functions $ f $ on $ [0,2\pi] $ such that $ f(0)=f(2\pi) $. Fix $ g\in\mathcal{L}(S^1) $ which is symmetric with respect to $ x=\pi $ (that is, $g(t)=g(2\pi-t)$). For every interval $ (a,b]\subset[0,\pi] $ let us define
$$
\nu_g((a,b])=V(g_{ab}),
$$
where
\begin{equation}\label{eq:gab}
g_{ab}=\begin{cases}g(a)&\mbox{ in }[0,a]\cup(2\pi-a,2\pi],\\
                                    g&\mbox{ in }(a,b]\cup(2\pi-b,2\pi-a],\\
                                    g(b)&\mbox{ in }(b,2\pi-b].\\
                                   \end{cases}
\end{equation}

We will sometimes use the symbol $ g_{a,b} $ instead of $ g_{ab} $.
\begin{lemma}
For every $g\in\mathcal{L}(S^1)$ which is symmetric with respect to $ x=\pi $, the function $\nu_g:\mathcal{A}_1\longrightarrow\mathbb{R}$ given by
$$ \nu_g\left(\bigcup_{j=1}^m I_j\right):=\sum_{j=1}^m\nu_g(I_j), $$
for every pairwise disjoint and semi-open intervals $ I_1,\ldots,I_m\subset[0,\pi] $, is well-defined and finitely additive.
\end{lemma}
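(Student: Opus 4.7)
The plan is to observe that, once well-definedness is established, finite additivity on $\mathcal{A}_1$ is immediate from the defining formula applied to disjoint unions. Well-definedness reduces, via the standard common-refinement argument (take the finite set of all endpoints appearing in any two representations, and iteratively apply the two-piece split), to the single-split identity: for every $a<c<b$ in $[0,\pi]$,
\begin{equation}\label{eq:split}
V(g_{a,b})=V(g_{a,c})+V(g_{c,b}).
\end{equation}

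To prove \eqref{eq:split} I would introduce the truncations of $g$ at level $g(c)$, namely $g^+(t)=\max\{g(t),g(c)\}$ and $g^-(t)=\min\{g(t),g(c)\}$; both are Lipschitz, piecewise linear, and symmetric with respect to $\pi$, so the functions $(g^\pm)_{a,b}$, $(g^\pm)_{a,c}$ and $(g^\pm)_{c,b}$ are well defined via \eqref{eq:gab}. A direct pointwise inspection shows
\[
g_I\vee g(c)\uno=(g^+)_I\quad\text{and}\quad g_I\wedge g(c)\uno=(g^-)_I,\qquad I\in\{(a,b),(a,c),(c,b)\};
\]
since $V(g(c)\uno)=0$, the valuation identity applied to each pair $\{g_I,g(c)\uno\}$ yields $V(g_I)=V((g^+)_I)+V((g^-)_I)$. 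Moreover, since $g^+\geq g(c)$ everywhere with equality at $c$ (and dually for $g^-$), one checks pointwise that
\[
(g^+)_{a,c}\wedge(g^+)_{c,b}=g(c)\uno\quad\text{and}\quad (g^+)_{a,c}\vee(g^+)_{c,b}=(g^+)_{a,b},
\]
together with the dual identities for $g^-$; applying the valuation property to the pairs $\{(g^\pm)_{a,c},(g^\pm)_{c,b}\}$ then gives $V((g^\pm)_{a,b})=V((g^\pm)_{a,c})+V((g^\pm)_{c,b})$. Combining all these identities produces \eqref{eq:split}.

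The main obstacle is precisely the proof of \eqref{eq:split} for a \emph{general} symmetric piecewise linear $g$. The naive attempt of applying the valuation property directly to the pair $\{g_{a,b},g(c)\uno\}$ recovers $V(g_{a,b})=V(g_{a,c})+V(g_{c,b})$ only when $g$ is monotone on $[a,b]$, since only then do $g_{a,b}\vee g(c)\uno$ and $g_{a,b}\wedge g(c)\uno$ coincide with $g_{c,b}$ and $g_{a,c}$ respectively. The truncation trick circumvents this by replacing $g$ with $g^\pm$, for which the splitting point $c$ automatically becomes a global extremum and the relevant lattice identities can be verified by direct pointwise computation.
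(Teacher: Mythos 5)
Your proof is correct, but it takes a genuinely different route from the paper's. You reduce the claim to the single-split identity $V(g_{a,b})=V(g_{a,c})+V(g_{c,b})$ and, believing that applying the valuation property directly is hopeless unless $g$ is monotone, you go through the truncations $g^\pm=g\vee g(c)\uno$, $g\wedge g(c)\uno$: this costs five applications of the valuation property (three to the pairs $\{g_I,g(c)\uno\}$ and two to the pairs $\{(g^\pm)_{a,c},(g^\pm)_{c,b}\}$), plus pointwise verification of the lattice identities you list, all of which do check out. The paper avoids the truncation altogether by pairing the \emph{other} way: with $a<b<c$ it applies the valuation property to the pair $\{g_{ab},g_{bc}\}$, then notices the lattice identities
\[
g_{ab}\vee g_{bc}=g_{ac}\vee g(b)\uno,\qquad g_{ab}\wedge g_{bc}=g_{ac}\wedge g(b)\uno,
\]
which hold pointwise for \emph{arbitrary} (not just monotone) $g$, and applies the valuation property once more to $\{g_{ac},g(b)\uno\}$. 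That is only two applications and one pair of identities, with no auxiliary functions. So your obstacle (``the naive attempt only works for monotone $g$'') is real for the pairing $\{g_{ab},g(c)\uno\}$ but disappears for the pairing $\{g_{ab},g_{bc}\}$; the paper's argument is shorter and, since $g^\pm$ is never introduced, cleaner. Your version does have one pedagogical advantage: it makes explicit that the mechanism is reduction to the case where the splitting point is a global extremum of the truncated function. Finally, your remark that finite additivity follows ``immediately'' from well-definedness is fine (disjoint unions of finite disjoint unions of half-open intervals in $[0,\pi]$ are again of that form), though the paper records in addition the full valuation property $\nu_g(I)+\nu_g(J)=\nu_g(I\cup J)+\nu_g(I\cap J)$ for overlapping intervals, which is a slightly stronger observation.
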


\begin{proof}
Clearly, to prove that $ \nu_g $ is well-defined, it is enough to show that for consecutive intervals $(a,b] $, $(b,c] $ we have
$$ \nu_g((a,b])+\nu_g((b,c])=\nu_g((a,c]). $$
To prove this, note that
$$
g_{ab}\vee g_{bc}=g_{ac}\vee g(b)\quad\text{and}\quad g_{ab}\wedge g_{bc}=g_{ac}\wedge g(b).
$$
Therefore, we have
\begin{align*}
\nu_g((a,b])+\nu_g((b,c])&=V(g_{ab})+V(g_{bc})=V(g_{ab}\vee g_{bc})+V(g_{ab}\wedge g_{bc})\\
&=V(g_{ac}\vee g(b))+V(g_{ac}\wedge g(b))=V(g_{ac})+V(g(b))\\
&=V(g_{ac})=\nu_g((a,c]).
\end{align*}

Hence, $\nu_g$ is well-defined on $\mathcal{A}_1$. Moreover, it is finitely additive: indeed, if $ I=(a,b] $ and $ J=(c,d] $ with
$ a<c<b<d $, from what we have just proved we get that
\begin{align*}
 \nu_g(I)+\nu_g(J)&=\nu_g((a,b])+\nu_g((c,d])\\
 &=\nu_g((a,c])+\nu_g((c,b])+\nu_g((c,b])+\nu_g((b,d])\\
 &=\nu_g((a,d])+\nu_g((c,b])\\
 &=\nu_g(I\cup J)+\nu_g(I\cap J).
\end{align*}
\end{proof}

The next technical lemma will allow us to prove that $ \nu_g $ is absolutely continuous with respect
to the Hausdorff measure $ H^1 $.

\begin{lemma}\label{l: lsubsets in S1}
Let $ \lambda\in\mathbb{R} $, $ \gamma\in\mathbb{R}_+ $ and let $ \theta(\lambda,\gamma) $ be
as in Observation \ref{o:ri}. Take $ \varepsilon>0 $ and $ G\subset[0,2\pi] $ an open interval.
From \eqref{control inequality}, there exists $ l_G>0 $ such that, for every $ f\in
\lip $ with $ f\prec G $, $ \|f\|_\infty\leq l_G $ and $ L(f)\leq\gamma $,
$$ \vert V_\lambda(f)\vert\leq(\theta(\lambda,\gamma)+\varepsilon)H^1(G). $$
For every open interval $ G'\subset G $ such that $ H^1(G)=kH^1(G') $ for some $ k\in
\mathbb{N} $, and every $ f\in\lip $ with $ f\prec G' $, $ \|f\|_\infty\leq l_G $ and $ L(f)\leq\gamma $, we have that
$$
\vert V_\lambda(f)\vert\leq(\theta(\lambda,\gamma)+\varepsilon)H^1(G').
$$
\end{lemma}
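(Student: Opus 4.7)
The plan is to tile $G$ by $k$ rotated copies of $G'$, so that I can apply the known bound on $G$ to a function built from $k$ disjoint translates of $f$, and then use the disjoint-support argument from the proof of Proposition \ref{medida} to split the resulting valuation into $k$ equal pieces.

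Concretely, write $G=(a,b)$ and $h=H^1(G')=(b-a)/k$, and partition $G$ into the consecutive open subintervals $G_i=(a+(i-1)h,a+ih)$, $i=1,\dots,k$. Each $G_i$ is a rotated image of $G'$, so for each $i$ I choose $\varphi_i\in\mathbf{O}(2)$ with $\varphi_i(G')=G_i$ and set $f_i=f\circ\varphi_i^{-1}$. Because $\supp(f)$ is closed in $S^1$ and contained in the open arc $G'$, it lies at strictly positive distance from $\partial G'$; consequently $\supp(f_i)$ is at positive distance from $\partial G_i$. Thus the function $F:=f_1+\cdots+f_k$ is a well-defined Lipschitz function whose summands have pairwise disjoint supports, which vanishes in a neighborhood of each partition point $a+jh$, and which satisfies $F\prec G$, $\|F\|_\infty=\|f\|_\infty\leq l_G$, and $L(F)\leq\gamma$. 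Applying the hypothesis to $F\prec G$ gives
$$|V_\lambda(F)|\leq(\theta(\lambda,\gamma)+\varepsilon)\,H^1(G).$$

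The decisive step is to verify the additivity $V_\lambda(F)=\sum_{i=1}^k V_\lambda(f_i)$: once this is in place, rotation invariance yields $V_\lambda(f_i)=V_\lambda(f)$, hence $V_\lambda(F)=kV_\lambda(f)$, and dividing the displayed bound by $k$ gives the desired inequality (using $H^1(G)/k=H^1(G')$). To prove additivity I would generalize the two-summand argument in the proof of Proposition \ref{medida} to $k$ functions with pairwise disjoint supports. One checks directly that $F^+=\bigvee_i f_i^+$, $F^-=\bigwedge_i f_i^-$, and $f_i^+\wedge f_j^+=0=f_i^-\vee f_j^-$ for $i\neq j$. Applying Inclusion-Exclusion (Proposition \ref{inclusion-exclusion}) to $F^\pm$ and using $V_\lambda(0)=0$ makes every cross-term vanish, leaving $V_\lambda(F^\pm)=\sum_i V_\lambda(f_i^\pm)$. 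Moreover, the valuation property applied to $f_i$ and $0$ (together with $V_\lambda(0)=0$) yields $V_\lambda(f_i)=V_\lambda(f_i^+)+V_\lambda(f_i^-)$, so summing in $i$ and combining the two parts gives the sought equality $V_\lambda(F)=\sum_i V_\lambda(f_i)$.

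I do not expect a substantial obstacle: the argument is essentially a scaling refinement of the hypothesis via rotation invariance. The only point requiring care is checking that the rotated copies $f_i$ fit inside pairwise disjoint open arcs with enough separation to be glued into a common $\gamma$-Lipschitz function, and that the $k$-fold inclusion-exclusion really collapses cleanly; both rely on the simple fact that a closed subset of an open arc in $S^1$ sits at strictly positive distance from its boundary.
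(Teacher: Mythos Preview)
Your proposal is correct and follows essentially the same route as the paper: both arguments tile $G$ by $k$ rotated copies of $G'$, form the sum $g=\sum_i f\circ\varphi_i$, use the disjoint-support Inclusion--Exclusion identity to get $V_\lambda(g)=kV_\lambda(f)$, and then compare with the bound on $G$. The only cosmetic difference is that the paper phrases this as a contradiction (assuming $|V_\lambda(f)|>(\theta(\lambda,\gamma)+\varepsilon)H^1(G')$ and deriving $|V_\lambda(g)|>(\theta(\lambda,\gamma)+\varepsilon)H^1(G)$), whereas you argue directly.
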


\begin{proof}
We choose an open interval $ G'\subset G=(a,b) $ and reason by contradiction: suppose there exists a
function $ f\in\lip $ with $ f\prec G' $, $ \|f\|_\infty\leq l_G $ and $ L(f)\leq\gamma $ such that
$ \vert V_\lambda(f)\vert>(\theta(\lambda,\gamma)+\varepsilon)H^1(G') $. We can write
$$ \vert V_\lambda(f)\vert=(\theta(\lambda,\gamma)+\varepsilon)H^1(G')+\rho, $$
for a suitable $ \rho>0 $.

Because of rotational invariance, we can assume that the left ends of $ G $ and $ G' $ coincide. Since
$ H^1(G)=kH^1(G') $, we can divide $ G $ into consecutive intervals $(G_i)_{i=1}^k$ of equal length, with $ G_1=G' $. In particular, we can write
$$ \overline{G}=\bigcup_{i=1}^k\overline{G_i}, $$
and, for $ i\in\{1,\ldots,k\} $, $ G_i=\varphi_i^{-1}(G') $, where $ \varphi_i $ is an appropriate rotation.

Define the function $ g:[0,2\pi]\longrightarrow\mathbb{R} $,
$$ g(x)=\sum_{i=1}^k f(\varphi_i(x)),\;x\in S^1. $$
Since $ f\circ\varphi_i\prec G_i $ for every $ i=1,\ldots,k $, the supports of the $ f\circ\varphi_i$'s are
pairwise disjoint, hence
$$ g\vee 0=\bigvee_{i=1}^k(f\circ\varphi_i)\vee 0, $$
$$ g\wedge 0=\bigwedge_{i=1}^k(f\circ\varphi_i)\wedge 0. $$
Now, the $ f\circ\varphi_i\vee 0$'s still have pairwise disjoint supports, which implies
$$ V_\lambda(g\vee 0)=\sum_{i=1}^k V_\lambda((f\circ\varphi_i)\vee 0), $$
from the Inclusion-Exclusion Principle. Analogously,
$$ V_\lambda(g\wedge 0)=\sum_{i=1}^k V_\lambda((f\circ\varphi_i)\wedge 0). $$
Moreover, $ g\prec G $, $ \|g\|_\infty\leq l_G $ and $ L(g)\leq\gamma $.

From the valuation property and rotational invariance we get
\begin{align*}
 \vert V_\lambda(g)\vert&=\vert V_\lambda(g\vee 0)+V_\lambda(g\wedge 0)\vert\\
&=\Bigg|\sum_{i=1}^k V_\lambda((f\circ\varphi_i)\vee 0)+\sum_{i=1}^k V_\lambda((f\circ\varphi_i)\wedge 0) \Bigg| \\
& =\Bigg|\sum_{i=1}^k V_\lambda(f\circ\varphi_i)\Bigg|=k\vert V_\lambda(f)\vert  =k(\theta(\lambda,\gamma)+\varepsilon)H^1(G')+k\rho\\
& =(\theta(\lambda,\gamma)+\varepsilon)H^1(G)+k\rho,
\end{align*}
in contradiction with the hypothesis.
\end{proof}

Given $g\in\mathcal{L}(S^1)$, by Lemma \ref{l:density unif bounded} we can define the number
$$
C_g=\sup\{\theta(\lambda,\gamma):\,|\lambda|\leq\|g\|_\infty, 0\leq\gamma\leq L(g)\}<\infty.
$$

\begin{lemma}\label{l:bounded on S1}
Let $ g\in\mathcal{L}(S^1) $. For every pairwise disjoint semi-open intervals $ I_1,\ldots,I_m\subset[0,\pi] $ we have
\begin{equation}\label{boundedness on the algebra}
\Bigg|\nu_g\left(\bigcup_{j=1}^m I_j\right)\Bigg|\leq2(C_g+1)H^1\left(\bigcup_{j=1}^m I_j\right).
\end{equation}
Therefore, $ \nu_g $ is absolutely continuous with respect to the Hausdorff measure $ H^1 $
on $ \mathcal{A}_1 $, and in particular, it is bounded on $ \mathcal{A}_1 $.
\end{lemma}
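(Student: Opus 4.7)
The plan is to reduce to bounding $|\nu_g((a,b])|\leq 2(C_g+1)(b-a)$ for a single semi-open interval $(a,b]\subset[0,\pi]$: finite additivity of $\nu_g$ and the triangle inequality give $|\nu_g(\bigcup_j I_j)|\leq\sum_j|\nu_g(I_j)|$, and summing the single-interval bounds yields \eqref{boundedness on the algebra}. Absolute continuity with respect to $H^1$ on $\mathcal{A}_1$ and boundedness of $\nu_g$ then follow immediately.

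For the single-interval bound, I would subdivide $a=a_0<a_1<\cdots<a_k=b$ so that $g$ is linear on each $(a_{j-1},a_j]$ (possible since $g\in\mathcal{L}(S^1)$ has finitely many breakpoints). Set $d_j=a_j-a_{j-1}$, $\lambda_j=g(a_{j-1})$, $\sigma_j=(g(a_j)-\lambda_j)/d_j$, and $h_j=g_{a_{j-1},a_j}-\lambda_j\uno$. Finite additivity of $\nu_g$ gives $\nu_g((a,b])=\sum_j V_{\lambda_j}(h_j)$, and each $h_j$ coincides with the trapezoidal function $\pm h_{|\sigma_j|,d_j,2\pi-2a_j,a_{j-1}}$ (the sign depending on whether $\sigma_j\geq 0$ or $<0$), with $|\sigma_j|\leq L(g)$. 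The translated valuation $V_{\lambda_j}$ is again $\tau$-continuous, rotation invariant, and null on constants, and so is $f\mapsto V_{\lambda_j}(-f)$; applying Lemma \ref{l:flatszero} to the appropriate one of these collapses the plateau, giving $V_{\lambda_j}(h_j)=V_{\lambda_j}(h_j^\sharp)$, where $h_j^\sharp$ is a triangular bump of support width $2d_j$, height at most $L(g)d_j$, and Lipschitz constant at most $L(g)$.

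I would then localize via Lemma \ref{l: lsubsets in S1}: fixing $G=(0,2\pi)$, $\gamma=L(g)$, $\varepsilon=1$, each $\lambda_j$ yields a threshold $l_{G,\lambda_j}>0$. If the subdivision is chosen so that $L(g)d_j<l_{G,\lambda_j}$ for every $j$, then $\|h_j^\sharp\|_\infty<l_{G,\lambda_j}$; picking $n_j=\lfloor\pi/d_j\rfloor$ and an open interval $G_j'\subset G$ of length $2\pi/n_j$ covering $\supp(h_j^\sharp)$ (possibly after a rotation to move the bump into the interior of $G_j'$), Lemma \ref{l: lsubsets in S1} produces
\begin{equation*}
|V_{\lambda_j}(h_j^\sharp)|\leq (C_g+1)\cdot\frac{2\pi}{n_j}\leq (C_g+1)\cdot\frac{2d_j}{1-d_j/\pi}.
\end{equation*}
Summing over $j$ and letting $\max_j d_j\to 0$ through successive refinements yields the bound $|\nu_g((a,b])|\leq 2(C_g+1)(b-a)$.

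\textbf{Main obstacle.} The delicate step is arranging the subdivision to be simultaneously fine ($\max_j d_j$ arbitrarily small) and compatible with the $\lambda$-dependent thresholds ($L(g)d_j<l_{G,\lambda_j}$ for every $j$): as the subdivision is refined, new $\lambda_j$'s appear and the corresponding thresholds may in principle shrink. The natural remedy is an adaptive construction, picking each $a_j$ so that $a_j-a_{j-1}<l_{G,g(a_{j-1})}/L(g)$, and verifying via $\tau$-continuity of $V$ (applied to the compact family of bumps $\lambda+h_j^\sharp$ in $\mathrm{Lip}_{\|g\|_\infty+L(g)}(S^1)$) that $\lambda\mapsto l_{G,\lambda}$ is bounded below by a positive constant on the compact set $g([a,b])$, so that the adaptive scheme reaches $b$ in finitely many steps with mesh as small as desired. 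A secondary point is the divisibility constraint in Lemma \ref{l: lsubsets in S1}, which contributes the factor $1/(1-d_j/\pi)$ that vanishes in the refinement limit.
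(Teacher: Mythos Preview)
Your reduction to the single-interval case and your use of Lemmas \ref{l:flatszero} and \ref{l: lsubsets in S1} are exactly the ingredients the paper uses. The gap is precisely where you flag it: the claim that $\lambda\mapsto l_{G,\lambda}$ admits a positive lower bound on $g([a,b])$ is not delivered by $\tau$-continuity. What $\tau$-continuity gives for your family of bumps is $V_{\lambda}(h^\sharp)\to 0$ as the width $d\to 0$; but you need the quantitative statement $|V_\lambda(h^\sharp)|=O(d)$ with constant independent of $\lambda$, and that is a question about the \emph{rate}. Phrased in terms of thresholds, you would need $\sup\{|V_\lambda(f)|:f\prec G,\ \|f\|_\infty\le l,\ L(f)\le L(g)\}$ to fall below $(C_g+1)H^1(G)$ \emph{uniformly} in $\lambda\in g([a,b])$ as $l\to 0$. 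This supremum is a pointwise decreasing, lower semicontinuous function of $\lambda$, and there is no compactness of $\{f:\norm{f}\le M\}$ in $d_\tau$ to upgrade pointwise to uniform; a putative bad sequence $(\lambda_n,f_n)$ with $\|f_n\|_\infty\to 0$ but $|V_{\lambda_n}(f_n)|$ bounded away from $0$ need not $\tau$-converge, so no contradiction arises. Without the uniform bound your adaptive partition need not reach $b$, and the refinement limit cannot be taken.

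The paper sidesteps the uniformity issue by a continuation argument rather than a global subdivision. It sets
\[
t_{\max}=\sup\bigl\{t\in[a,b]:|\nu_g((a,t])|\le 2(C_g+1)H^1((a,t])\bigr\},
\]
uses the $\tau$-continuity of $t\mapsto g_{at}$ to show the inequality persists at $t_{\max}$, and then applies Lemmas \ref{l:flatszero} and \ref{l: lsubsets in S1} \emph{once}, at the single value $\lambda_{\max}=g(t_{\max})$, to obtain $|\nu_g((t_{\max},t_0])|\le 2(C_g+1)H^1((t_{\max},t_0])$ for some $t_0>t_{\max}$, contradicting maximality. Fixing $\lambda_{\max}$ first means $l_G$ is chosen for one $\lambda$ only, so no uniformity is ever invoked. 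Your adaptive scheme can be repaired in exactly this spirit: instead of asserting a uniform lower bound on step sizes, argue that if the greedy process accumulated at some $a_\infty<b$, a single application of the two lemmas at $\lambda=g(a_\infty)$ would push the bound strictly past $a_\infty$.
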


\begin{proof}
We preliminarily prove the result for $ m=1 $, i.e.,
\begin{equation}\label{boundedness on the algebra for m=1}
\vert\nu_g(I)\vert\leq2(C_g+1)H^1(I),
\end{equation}
for $ I=(a,b]\subset[0,\pi] $.

Define
$$
t_{\max}=\sup\left\{t\in[a,b]:\vert\nu_g((a,t])\vert\leq2(C_g+1)H^1((a,t])\right\}.
$$
Clearly $t_{\max}\geq a$. To prove  the case $m=1$ we will show that $t_{\max}=b$ and $\vert\nu_g((a,t_{\max}])\vert\leq 2(C_g+1)H^1((a,t_{\max}]).$

We start by verifying that
\begin{equation}\label{claim}
\vert\nu_g((a,t_{\max}])\vert \leq 2(C_g+1)H^1((a,t_{\max}]).
\end{equation}

To prove this, note that $ g_{at}\overset\tau \rightarrow g_{at_{\max}} $, as $ t\rightarrow t_{\max} $. Indeed, if $ t\leq t_{\max} $ we have
\begin{align*}
\sup_{x\in[0,\,2\pi]}\vert g_{at}(x)-g_{at_{\max}}(x)\vert&=\sup_{x\in[t,\pi]}\vert g_{at}(x)-g_{at_{\max}}(x)\vert\\
&\leq\sup_{x\in[t,\,t_{\max}]}\vert g(t)-g(x)\vert+\!\!\!\!\sup_{x\in(t_{\max},\,\pi]}\vert g(t)-g(t_{\max})
\vert\\
&\leq 2L(g)|t-t_{\max}|.
\end{align*}

If $ t>t_{\max} $ the same bound is true, with very similar reasonings. 
Hence $ g_{at}\rightarrow g_{at_{\max}} $ uniformly on $ [0,2\pi] $, as $ t\rightarrow t_{\max} $. 

Now, let
$$
I_t=(\min\{t,t_{\max}\},\max\{t,t_{\max}\}].
$$
Then $ g_{at}'(x)=g_{at_{\max}}'(x) $ for a.e. $ x\in[0,2\pi]\setminus I_t $, and $ H^1(I_t)\rightarrow 0 $ as
$ t\rightarrow t_{\max} $, so that $ g_{at}'\rightarrow g_{at_{\max}}' $ a.e. for
$ t\rightarrow t_{\max} $. Finally, $ \vert g_{at}'\vert\leq L(g) $ a.e. in $ [0,2\pi] $. Therefore,
$ g_{at}\overset\tau \rightarrow g_{at_{\max}} $ when $ t\rightarrow t_{\max} $, as claimed.

So, letting $ t\rightarrow t_{\max}^- $ in $ \vert\nu_g((a,t])\vert\leq2(C_g+1)H^1((a,t]) $, by the
continuity of $ V $ we get
$$
\vert\nu_g((a,t_{\max}])\vert\leq2(C_g+1)H^1((a,t_{\max}]).
$$

This proves \eqref{claim}. 

To finish the proof in the case $m=1$ we have to prove that $t_{\max}=b$. We reason by contradiction. 

Suppose that $t_{\max}<b$. First note that, in this case, we would actually have equality in  \eqref{claim}. This follows from the fact that, for every $ b>t>t_{\max} $, we have
$$
\vert\nu_g((a,t])\vert>2(C_g+1)H^1((a,t]). 
$$

Therefore, we use $\tau$-convergence and the continuity of $V$ as above and we obtain
$$
\vert\nu_g((a,t_{\max}])\vert\geq2(C_g+1)H^1((a,t_{\max}]),
$$
hence the claimed equality.

Consider the open interval $ G=(t_{\max},2\pi-t_{\max}) $ and let $\lambda_{\max}=g(t_{\max})$. Fix $0<\varepsilon<1$. From the definition of $ \mu_{\lambda_{\max},\gamma_0} $
and Observation \ref{o:ri}, there exists $ l_G>0 $ such that for every $ f\in\lip $ with $ f\prec G $,
$ \|f\|_\infty\leq l_G $, $ L(f)\leq\gamma_0 $, we have
$$ |V_{\lambda_{\max}}(f)|\leq\left(\theta(\lambda_{\max},\gamma_0)+\varepsilon\right)H^1(G). $$

Since $ t_{\max}<b $ and $ g $ is piecewise linear, we can choose $ t_0\in(0,\pi) $ such
that \begin{itemize}
    \item $ t_{\max}<t_0<\min\{b,t_{\max}+\frac{\pi}{2}\} $,
    \item $ g|_{[t_{\max},\,t_0]} $ is linear,
    \item $ \vert g(t)-\lambda_{\max}\vert\leq l_G $ for every $ t\in[t_{\max},t_0] $.
\end{itemize}
We can also choose $\alpha>0$, as small as we wish, with $t_0+\alpha<b$ and such that if we set
$$
G'=(t_{\max},2t_0+\alpha-t_{\max}),
$$
then $H^1(G)=kH^1(G'),$ for some $ k\in\mathbb{N} $.

Suppose $ g $ to be increasing in $ [t_{\max},t_0] $; if $ g|_{[t_{\max},t_0]} $ were decreasing, we would do similar reasonings adapting Lemma \ref{l:flatszero}.

Observe that if we set $\gamma=g'(t)$ for any $t\in(t_{\max},t_0)$, $d=t_0-t_{\max}$ and $\ell=2(\pi-t_0)$, then it holds that
$$
g_{t_{\max},\,t_0}-\lambda_{\max}= h_{\gamma,d,\ell,t_{\max}}.
$$
Hence, because of our choice of $t_0$ and Lemma \ref{l:flatszero}, we have that
$$
V_{\lambda_{\max}}(g_{t_{\max},\,t_0}-\lambda_{\max})=V_{\lambda_{\max}}(h_{\gamma,d,0,t_{\max}}).
$$

On the other hand, it is easy to check that, up to a rotation, $h_{\gamma,d,0,t_{\max}}\prec G'$, $\|h_{\gamma,d,0,t_{\max}}\|_\infty\leq l_G$ and $L(h_{\gamma,d,0,t_{\max}})\leq \gamma\leq \gamma_0$. Thus, Lemma \ref{l: lsubsets in S1} yields that
$$
\vert V_{\lambda_{\max}}(h_{\gamma,d,0,t_{\max}})\vert\leq(\theta(\lambda_{\max},\gamma_0)+\varepsilon)H^1(G').
$$

Therefore, we have
\begin{align*}
\Big|\nu_g((t_{\max},t_0])\Big|&=\Big|V(g_{t_{\max},t_0})\Big|=\Big|V_{\lambda_{\max}}(g_{t_{\max},\,t_0}-\lambda_{\max})\Big|=\Big|V_{\lambda_{\max}}(h_{\gamma,d,0,t_{\max}})\Big|\\
& \leq(\theta(\lambda_{\max},\gamma_0)+\varepsilon)H^1((t_{\max},2t_0+\alpha-
t_{\max}))\\
&<2(C_g+1)H^1\left(\left(t_{\max},t_0+\frac{\alpha}{2}\right]\right).
\end{align*}
Letting now $\alpha\rightarrow 0^+$ we get that
\begin{equation}\label{estimate on nu_g}
\Big|\nu_g((t_{\max},t_0])\Big|\leq 2(C_g+1)H^1\left(\left(t_{\max},t_0\right]\right).
\end{equation}

From the finite additivity of $ \nu_g $, \eqref{claim}, \eqref{estimate on nu_g} and the finite
additivity of $ H^1 $, we have
\begin{align*}
\vert\nu_g((a,t_0])\vert&=\vert\nu_g((a,t_{\max}])+\nu_g((t_{\max},t_0])\vert\\
&\leq2(C_g+1)H^1((a,t_{\max}])+\vert\nu_g((t_{\max},t_0])\vert\\
&\leq2(C_g+1)H^1((a,t_{\max}])+  2(C_g+1)H^1\left(\left(t_{\max},t_0\right]\right)\\
&=2(C_g+1)H^1\left(\left(a,t_0\right]\right).
\end{align*}
This is a contradiction with the definition of $ t_{\max} $. Hence, we must have $ t_{\max}=b $, and the proof of \eqref{boundedness on the algebra for m=1} is finished. 

For the general case, note that for every pairwise disjoint semi-open intervals $ I_1,\ldots,I_m $
we have, because of \eqref{boundedness on the algebra for m=1},
$$
\Bigg|\nu_g\left(\bigcup_{j=1}^m I_j\right)\Bigg|=\Bigg|\sum_{j=1}^m\nu_g(I_j)\Bigg|
\leq2(C_g+1)\sum_{j=1}^m H^1(I_j)=2(C_g+1)H^1\left(\bigcup_{j=1}^m I_j\right).
$$
\end{proof}

Let us now consider the algebra
$$
\mathcal{A}_2=\left\{\bigcup_{j=1}^m I_j:m\in\mathbb{N},\,I_j=(a_j,b_j]\subset[\pi,2\pi],\,
I_j\cap I_k=\emptyset\mbox{ for }j\neq k\right\}.
$$
For a symmetric $ g\in\mathcal{L}(S^1) $, we can analogously define a finitely additive function
$ \nu_g $ on $ \mathcal{A}_2 $ such that \eqref{boundedness on the algebra} holds for every pairwise
disjoint semi-open intervals $ I_1,\ldots,I_m\subset[\pi,2\pi] $.

\begin{definition}\label{def:nug}
For an arbitrary $ g\in\mathcal{L}(S^1) $, we can now define a function $ \nu_g $ on the algebra
$$
\mathcal{A}=\left\{\bigcup_{j=1}^m I_j:m\in\mathbb{N},\,I_j=(a_j,b_j]\subset[0,2\pi],\,
I_j\cap I_k=\emptyset\mbox{ for }j\neq k\right\}
$$
by setting
$$
\nu_g\left(\bigcup_{j=1}^m I_j\right):=\frac12\left(\nu_{g_1}\left(\bigcup_{j=1}^m(I_j\cap[0,\pi])\right)+
\nu_{g_2}\left(\bigcup_{j=1}^m(I_j\cap[\pi,2\pi])\right)\right),
$$
for every pairwise disjoint and semi-open intervals $ I_1,\ldots,I_m\subset[0,2\pi] $, where $ g_1 $,
$ g_2 $ are the symmetric extensions to $ [0,2\pi] $ of $ g|_{[0,\pi]} $ and $ g|_{[\pi,2\pi]} $,
respectively.
\end{definition}

Clearly, we have that $ \nu_g $ is finitely additive and satisfies
\begin{equation}\label{absolute continuity of the measure}
\vert\nu_g\vert\leq2(C_g+1)H^1
\end{equation}
on $ \mathcal{A} $. The next lemma follows from standard arguments.

\begin{lemma}\label{l:nug extend}
For every $ g\in\mathcal{L}(S^1) $, $ \nu_g $ can be extended to a signed measure on the Borel sigma-algebra
$$ \Sigma=\sigma\left(\left\{(a,b]:0\leq a\leq b\leq 2\pi\right\}\right), $$
and $ \nu_g$ is absolutely continuous with respect to $H^1$ on $ \Sigma $.
\end{lemma}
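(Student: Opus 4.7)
The plan is to first upgrade the finite additivity of $\nu_g$ on $\mathcal{A}$ to countable additivity, then invoke the Hahn--Kolmogorov extension theorem, and finally deduce absolute continuity from the pointwise bound \eqref{absolute continuity of the measure}.

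\smallskip

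For the countable additivity step, I would take a pairwise disjoint sequence $(A_n)_{n\in\mathbb N}\subset\mathcal{A}$ whose union $A:=\bigcup_n A_n$ also lies in $\mathcal{A}$. Since $A$ is a finite disjoint union of semi-open intervals and each $A_n\in\mathcal{A}$, the remainders $B_N:=A\setminus\bigcup_{n=1}^N A_n$ belong to $\mathcal{A}$ for every $N$. Finite additivity of $\nu_g$ gives
\[
\nu_g(A)=\sum_{n=1}^N \nu_g(A_n)+\nu_g(B_N),
\]
and the bound \eqref{absolute continuity of the measure} yields $|\nu_g(B_N)|\leq 2(C_g+1)H^1(B_N)$. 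Since $H^1$ is a genuine measure and $H^1(A)<\infty$, countable additivity of $H^1$ gives $H^1(B_N)\downarrow 0$, hence $\nu_g(B_N)\to 0$ and $\nu_g(A)=\sum_{n=1}^\infty \nu_g(A_n)$.

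\smallskip

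Next, I would apply the Hahn--Kolmogorov extension theorem to extend $\nu_g$ from the algebra $\mathcal{A}$ to $\Sigma=\sigma(\mathcal{A})$. The cleanest way to handle the signed case is to first produce, via \eqref{absolute continuity of the measure}, two finite positive finitely additive set functions $\nu_g^+, \nu_g^-$ on $\mathcal{A}$ with $\nu_g=\nu_g^+-\nu_g^-$ (for instance, $\nu_g^+(A)=\sup\{\nu_g(B):B\in\mathcal{A},\, B\subset A\}$, which is finite and bounded by $2(C_g+1)H^1(A)$, and analogously for $\nu_g^-$). The countable additivity of $\nu_g$ established above together with the bound transfers to each of $\nu_g^+$ and $\nu_g^-$, so both extend uniquely to finite positive Borel measures on $\Sigma$. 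Subtracting these extensions defines the signed measure $\nu_g$ on $\Sigma$.

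\smallskip

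Finally, for absolute continuity I would observe that the inequality $|\nu_g(A)|\leq 2(C_g+1)H^1(A)$, which is valid on $\mathcal{A}$, passes to $\Sigma$ by a standard monotone class or outer regularity argument: for every $A\in\Sigma$ and every $\varepsilon>0$, the outer regularity of $H^1$ on $[0,2\pi]$ allows one to cover $A$ by an open set $U\supset A$ with $H^1(U\setminus A)<\varepsilon$, and open subsets of $[0,2\pi]$ are countable disjoint unions of open intervals, hence approximable in $\mathcal{A}$. Consequently $H^1(A)=0$ forces $\nu_g(A)=0$, establishing $\nu_g\ll H^1$ on $\Sigma$. I do not expect any significant obstacle here, as the lemma is positioned as following from standard measure-theoretic arguments; the only substantive point is the countable additivity step, which rests crucially on the absolute continuity bound \eqref{absolute continuity of the measure} proved in Lemma~\ref{l:bounded on S1}.
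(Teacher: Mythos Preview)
Your proposal is correct and follows essentially the same route as the paper: both arguments use the bound $|\nu_g|\leq 2(C_g+1)H^1$ from Lemma~\ref{l:bounded on S1} to obtain countable additivity on $\mathcal{A}$, pass to the Jordan-type decomposition $\nu_g^\pm(A)=\sup\{\pm\nu_g(B):B\subset A,\,B\in\mathcal{A}\}$, extend each part via Carath\'eodory/Hahn--Kolmogorov, and then deduce absolute continuity on $\Sigma$ by outer regularity of $H^1$ and approximation through the algebra. The only cosmetic difference is the order---the paper decomposes first and then proves countable additivity of $\nu_g^\pm$, whereas you prove countable additivity of $\nu_g$ first---but since the bound $\nu_g^\pm\leq 2(C_g+1)H^1$ is what actually drives the argument in both cases, this is immaterial.
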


\begin{proof}
Fix $ g\in\mathcal{L}(S^1) $. Inequality \eqref{absolute continuity of the measure} implies the
boundedness of $ \nu_g $ on $ \mathcal{A} $. From \cite[Theorem 2.5.3, (1)-(9)]{Bhaskara Rao},
if we define
$$ \nu_g^+(A)=\sup\{\nu_g(B):B\subset A,\,B\in\mathcal{A}\}, $$
$$ \nu_g^-(A)=\sup\{-\nu_g(B):B\subset A,\,B\in\mathcal{A}\}, $$
for $ A\in\mathcal{A} $, then $ \nu_g^+ $, $ \nu_g^- $ are non-negative and bounded \textit{charges}
such that $ \nu_g=\nu_g^+-\nu_g^- $ (for the definition of charge, see \cite[Definition 2.1.1]{Bhaskara Rao}). Note that
$ \nu_g^\pm(A)\leq2(C_g+1)H^1(A) $ for every $ A\in\mathcal{A} $, hence $ \nu_g^\pm\ll H^1 $ on $ \mathcal{A} $.

Let us prove that $ \nu_g^+ $ and $ \nu_g^- $ are countably additive on the algebra
$ \mathcal{A} $. Let $ \{A_i\}\subset\mathcal{A} $ be pairwise disjoint sets such that $ A=\bigcup_{i
\in\mathbb{N}}A_i\in\mathcal{A} $. Let $ \varepsilon>0 $. Then there exists $ \delta>0 $ such that
$ H^1(B)<\delta $ implies $ \nu_g^\pm(B)<\varepsilon $, for every $ B\in\mathcal{A} $. Since
$$ \sum_{i\in\mathbb{N}}H^1(A_i)=H^1(A)<\infty, $$
there is a number $ M\in\mathbb{N} $ such that
$$ H^1\left(\bigcup_{i=m}^\infty A_i\right)=\sum_{i=m}^\infty H^1(A_i)<\delta $$
for every $ m\geq M $. Now,
$$ \bigcup_{i=m}^\infty A_i=\left(\bigcup_{i\in\mathbb{N}}A_i\right)\setminus\bigcup_{i=1}^{m-1}
A_i\in\mathcal{A}, $$
hence
$$ \nu_g^\pm\left(\bigcup_{i=m}^\infty A_i\right)<\varepsilon, $$
for $ m\geq M $, that is,
$$ \lim_{m\rightarrow\infty}\nu_g^\pm\left(\bigcup_{i=m}^\infty A_i\right)=0. $$
From the finite additivity of $ \nu_g^\pm $ on the algebra, we have
$$ \nu_g^\pm\left(\bigcup_{i\in\mathbb{N}}A_i\right)=\sum_{i=1}^{m-1}\nu_g^\pm(A_i)
+\nu_g^\pm\left(\bigcup_{i=m}^\infty A_i\right). $$
Letting $ m\rightarrow\infty $ we conclude.

Thus $ \nu_g^+ $, $ \nu_g^- $ are countably additive on the algebra $ \mathcal{A} $. Carath\'{e}odory's extension theorem
\cite[Theorem 3.5.2]{Bhaskara Rao} implies that they can be extended to measures on $ \sigma(\mathcal{A}) $, hence on $ \Sigma $;
this allows us to extend $ \nu_g $ to a signed measure on $ \Sigma $, which still satisfies $ \nu_g=\nu_g^+-\nu_g^- $.

Let us now prove that $ \nu_g\ll H^1 $ on $ \Sigma $. It is enough to show that $ \nu_g^\pm\ll H^1 $.
Fix $ \varepsilon>0 $. From the absolute continuity of $ \nu_g^\pm $ on the algebra, we have that
there exists a $ \delta>0 $ such that, for every $ B\in\mathcal{A} $, if $ H^1(B)<\delta $ then
$ \nu_g^\pm(B)<\varepsilon/2 $.

Pick now $ A\in\Sigma $ such that $ H^1(A)<\delta_0:=\delta/2 $. By regularity of the Hausdorff
measure, there exists an open set $ U\supset A $ such that $ H^1(U\setminus A)<\delta/2 $. Then
$$ H^1(U)=H^1(A)+H^1(U\setminus A)<\delta. $$

We can write $ U=\bigcup_{j\in\mathbb{N}}I_j $, where the $ I_j$'s are pairwise disjoint open intervals.
Note that
$$ \sum_{j\in\mathbb{N}}\nu_g^\pm(I_j)=\nu_g^\pm(U)<\infty. $$
Then there exists $ m\in\mathbb{N} $ such that
$$ \sum_{j=m}^\infty\nu_g^\pm(I_j)<\frac{\varepsilon}{2}. $$
Now, if $ I_j=(a_j,b_j) $ for every $ j\in\mathbb{N} $, we have
$$ H^1\left(\bigcup_{j=1}^{m-1}(a_j,b_j]\right)=H^1\left(\bigcup_{j=1}^{m-1}I_j\right)\leq H^1(U)<\delta, $$
with $ \displaystyle{\bigcup_{j=1}^{m-1}}(a_j,b_j]\in\mathcal{A} $. By monotonicity and additivity of
$ \nu_g^\pm $, and using the fact that $ \nu_g^\pm $ is null at singletons, we get
$$ \nu_g^\pm(A)\leq\nu_g^\pm(U)=\nu_g^\pm\left(\bigcup_{j=1}^{m-1}(a_j,b_j]\right)+\sum_{j=m}^\infty
\nu_g^\pm(I_j)<\varepsilon. $$
\end{proof}

\section{Definition of the pseudo kernel $K(\lambda,\gamma)$}\label{section:kernel}

Lemma \ref{l:nug extend} allows us to use the (signed version of) the Radon-Nikodym Theorem \cite[Theorem 2.2.1]{Ash}: for every
$ g\in\mathcal{L}(S^1) $, there exists a function $ D_g=\frac{d\nu_{g}}{d H^1}\in L^1(S^1, H^1) $ such that
$$ \nu_g(A)=\int_A D_g(t)dH^1(t), $$
for every $ A\in\Sigma $.

For every $ \lambda\in\mathbb{R} $, $ \gamma\in\mathbb{R}_+ $, define the function
$$ \psi_{\lambda,\gamma}(t)=\begin{cases}\lambda+\gamma(t-\frac{\pi}{2})&\quad\mbox{if }t\in[0,
                                                                    \pi],\\
                                                                    \lambda+\gamma(\frac{3\pi}{2}-t)&\quad\mbox{if }
                                                                    t\in[\pi,2\pi].\end{cases} $$

\begin{center}
\begin{tikzpicture}
\draw (0,0) -- (7,0);
\draw (0,0) -- (0,3);
\filldraw[black](0,2.5)node[anchor=west]{$\psi_{\lambda,\gamma}$};
\draw[thick,blue] (0,0.5) -- (3.14,2.5);
\draw[thick,blue] (3.14,2.5) -- (6.28,0.5);
\draw[dashed] (3.14,0) -- (3.14,2.5);
\draw[dashed] (6.28,0) -- (6.28,1.5);
\draw[dashed] (0,1.5) -- (6.28,1.5);
\draw (3.14,0) node[anchor=north] {$\pi$};
\draw (6.28,0) node[anchor=north] {$2\pi$};
\draw (0,0.5) node[anchor=east] {$\lambda-\frac{\gamma\pi}{2}$};
\draw (0,1.5) node[anchor=east] {$\lambda$};
\end{tikzpicture}
\end{center}

For fixed $ \lambda\in\mathbb{R} $, $ \gamma\in\mathbb{R}_+ $ and $ m\in\mathbb{N} $, let us
consider the ``saw function'' $ S_{\lambda,\gamma,m} $ obtained by joining $ m $ shrinked
and translated copies of $ \psi_{\lambda,\gamma} $ as follows:
\begin{equation}\label{eq:saw}
S_{\lambda,\gamma,m}(t)=\frac{1}{m}\sum_{j=1}^m\psi_{\lambda m,\gamma}(mt-
2(j-1)\pi)\chi_{\left[\frac{2(j-1)\pi}{m},\frac{2j\pi}{m}\right)}(t),
\end{equation}
for $ t\in[0,2\pi] $, with the convention that points on the abscissae axis are to be identified modulo $ 2\pi $, so that $ S_{\lambda,\gamma,m}(2\pi)=S_{\lambda,\gamma,m}(0) $. Here $ \chi_{[a,b)} $ denotes
the characteristic function of the interval $ [a,b) $.

\begin{center}
\begin{tikzpicture}
\draw (0,0) -- (9,0);
\draw (0,0) -- (0,3);
\filldraw[black](0,2.7)node[anchor=west]{$S_{\lambda,\gamma,m}$};
\draw[thick,blue] (0,1) -- (1,2);
\draw[thick,blue] (1,2) -- (2,1);
\draw[thick,blue] (2,1) -- (3,2);
\draw[thick,blue] (3,2) -- (4,1);
\draw[thick,blue] (4,1) -- (5,2);
\draw[thick,blue] (5,2) -- (6,1);
\draw[thick,blue] (6,1) -- (7,2);
\draw[thick,blue] (7,2) -- (8,1);
\draw[dashed] (1,0) -- (1,2);
\draw[dashed] (2,0) -- (2,1);
\draw[dashed] (8,0) -- (8,1.5);
\draw[dashed] (0,1.5) -- (8,1.5);
\draw (1,0) node[anchor=north] {$\frac{\pi}{m}$};
\draw (2,0) node[anchor=north] {$\frac{2\pi}{m}$};
\draw (8,0) node[anchor=north] {$2\pi$};
\draw (0,1) node[anchor=east] {$\lambda-\frac{\gamma\pi}{2m}$};
\draw (0,1.6) node[anchor=east] {$\lambda$};
\end{tikzpicture}
\end{center}

Note that, for every $ m\in\mathbb{N} $, $ |S_{\lambda,\gamma,m}'(t)|=\gamma $ for a.e. $ t\in[0,2\pi] $, and
$$ \|S_{\lambda,\gamma,m}\|_\infty=\frac{1}{m}\max_{t\in\left[0,\frac{2\pi}{m}\right]}\left|
\psi_{\lambda m,\gamma}(mt)\right|=\frac{1}{m}\max\left\{|\psi_{\lambda m,\gamma}(0)|,
|\psi_{\lambda m,\gamma}(\pi)|\right\}\leq|\lambda|+\frac{\gamma\pi}{2}, $$
so that
$$ \norm{S_{\lambda,\gamma,m}}\leq|\lambda|+\frac{\gamma\pi}{2}. $$
Thus, it follows from Lemma \ref{l:bbs} that $ \displaystyle{\sup_{m\in\mathbb{N}}}|V(S_{\lambda,
\gamma,m})|<\infty $.

We can then define
\begin{equation}\label{defK}
K(\lambda,\gamma):\,=C_0\limsup_{m\rightarrow\infty}V(S_{\lambda,\gamma,m}),
\end{equation}
where $ C_0>0 $ is the constant such that
$$ H^1=\frac{1}{4\pi C_0}\mathscr{L}^1, $$
$ \mathscr{L}^1 $ being the Lebesgue measure on $ \mathbb{R} $.
This function $ K $ has the following connection with our Radon-Nikodym derivative.
\begin{lemma}\label{kernel for piecewise linear}
Let $\gamma\in\mathbb{R}_+$ and $0\leq a<b\leq 2\pi$. If $g\in\mathcal{L}(S^1)$ is such that $|g'(t)|=\gamma$ for
a.e. $t\in [a,b]$, then
\begin{equation}\label{Ksaw}
K(g(t),|g'(t)|)=D_g(t),
\end{equation}
for a.e. $t \in [a,b]$.
\end{lemma}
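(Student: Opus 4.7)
The plan is to evaluate $D_g(t)=d\nu_g/dH^1$ at a Lebesgue point $t\in[a,b]$ by means of the Lebesgue differentiation theorem applied to a symmetric family of intervals $I_m=[t-\pi/(2m),\,t+\pi/(2m)]$ of length $\pi/m$, and then to match this with the definition of $K(g(t),\gamma)$ via the saw function $S_{g(t),\gamma,m}$. By rotation invariance it suffices to treat, say, $[a,b]\subset(0,\pi)$ with $g'\equiv+\gamma$ on $[a,b]$ (the case $g'\equiv-\gamma$ and the symmetric situation $[a,b]\subset(\pi,2\pi)$ are entirely analogous; intervals crossing $0,\pi,2\pi$ are split at these points). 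Under this reduction, Definition~\ref{def:nug} gives $\nu_g(I)=\tfrac12\nu_{g_1}(I)=\tfrac12V((g_1)_I)$ whenever $I\subset[0,\pi]$, where $g_1$ is the symmetric extension of $g|_{[0,\pi]}$.

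For $m$ sufficiently large, $I_m\subset(a,b)$ and the function $(g_1)_{I_m}$ is a plateau shape on $S^1$: constant equal to $c_m:=g(t)-\gamma\pi/(2m)$ outside a symmetric neighbourhood, rising with slope $\gamma$ on $(t-\pi/(2m),t+\pi/(2m)]$, plateauing at $c_m+\gamma\pi/m$, and falling with slope $-\gamma$ on the reflected arc. Thus $(g_1)_{I_m}-c_m=h_{\gamma,\pi/m,\ell_m,t-\pi/(2m)}$ with $\ell_m=2\pi-2t-\pi/m\in[0,2(\pi-\pi/m)]$ for $m$ large. Since $V_{c_m}(f):=V(f+c_m)$ is itself a $\tau$-continuous, rotation-invariant valuation that vanishes on constants, Lemma~\ref{l:flatszero} applies and yields
\[V((g_1)_{I_m})=V(c_m+h_{\gamma,\pi/m,0,0})=V(c_m+T_m),\]
where $T_m:=h_{\gamma,\pi/m,0,0}$ is precisely a single tooth of the saw $S_{g(t),\gamma,m}$.

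On the saw side, write $S_{\lambda,\gamma,m}=c_m+\sum_{j=1}^mT^{(j)}$, where $T^{(j)}$ is the rotation of $T_m$ by $(j-1)2\pi/m$; the tents $T^{(j)}$ have pairwise disjoint interiors of support. Let $h_j:=c_m+\sum_{i\le j}T^{(i)}$. A direct verification shows that $h_j\vee(c_m+T^{(j+1)})=h_{j+1}$ and $h_j\wedge(c_m+T^{(j+1)})=c_m$, since on the support of $T^{(j+1)}$ all earlier teeth vanish and vice versa. The valuation identity together with $V(c_m)=0$ and rotation invariance telescopes to $V(S_{\lambda,\gamma,m})=m\,V(c_m+T_m)$. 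Setting $\lambda=g(t)$ and combining the two computations,
\[\frac{\nu_g(I_m)}{H^1(I_m)}=\frac{\tfrac12 V(c_m+T_m)}{1/(2m)}=V(S_{g(t),\gamma,m}).\]
The Lebesgue differentiation theorem forces the left side to tend to $D_g(t)$ for $H^1$-almost every $t$, so $\lim_m V(S_{g(t),\gamma,m})$ exists at such points, the $\limsup$ in the definition of $K$ is realised as a genuine limit, and the identity $K(g(t),|g'(t)|)=D_g(t)$ follows once the normalization constant $C_0$ is absorbed.

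The main obstacle is the saw decomposition: one must verify that adjacent teeth $T^{(j)}$ and $T^{(j+1)}$ meet only at a single point where both vanish, so that the lattice operations between $h_j$ and $c_m+T^{(j+1)}$ really produce the clean identities $\max=h_{j+1}$ and $\min=c_m$ and the valuation recurrence telescopes with no spurious boundary correction. Secondary but non-trivial issues are (i) checking that the parameters $d=\pi/m$ and $\ell_m=2\pi-2t-\pi/m$ stay in the admissible range of Lemma~\ref{l:flatszero}, which forces $t$ to lie strictly inside $(0,\pi)$ (the remaining null set of boundary points is harmless), and (ii) handling the "twin side" of the symmetric extension so that the factor $\tfrac12$ arising from Definition~\ref{def:nug} is correctly absorbed into the final identification with $C_0\,\limsup_m V(S_{g(t),\gamma,m})$.
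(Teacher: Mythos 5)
Your proposal is correct and takes essentially the same route as the paper: both arguments decompose $V(S_{\lambda,\gamma,m})$ into $m$ congruent teeth above the baseline $c_m=\lambda-\gamma\pi/(2m)$ (you by telescoping the valuation identity $V(h_{j+1})=V(h_j)+V(c_m+T^{(j+1)})$, the paper by invoking the Inclusion--Exclusion Principle on $\bigvee_j u_j$, which gives the same identity since pairwise infima are constant), then use Lemma~\ref{l:flatszero} applied to the translated valuation $V_{c_m}$ to identify the single tooth with $(g_1)_{I_m}$ and hence with $\nu_{g}(I_m)$, and finally pass to the limit via the Lebesgue--Besicovitch differentiation theorem at Lebesgue points of $D_g$. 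The one spot where you should not wave your hands is the closing ``the identity follows once the normalization constant $C_0$ is absorbed'': your own computation gives $\lim_m V(S_{g(t),\gamma,m})=D_g(t)$ while by definition $K=C_0\limsup_m V(S_{\lambda,\gamma,m})$, so the factor $\tfrac12$ from Definition~\ref{def:nug}, the value $H^1(I_m)=\tfrac1{2m}$, and the normalization of $H^1$ entering $C_0$ must actually be reconciled rather than declared to cancel; this bookkeeping is exactly where the paper's own displayed formulas in this proof are least reliable, so it deserves an explicit check.
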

\begin{proof}
Let $ g\in\mathcal{L}(S^1) $ be as in the hypothesis. Consider $ (c,d)\subseteq[a,b] $ such that
$ g'(t)=\gamma $ for a.e. $ t\in(c,d) $. By the Lebesgue-Besicovitch Differentiation Theorem
\cite[Section 1.7.1]{EvGa}, we have that for a.e. $ t\in(c,d) $
\begin{equation}\label{Lebesgue-Besicovitch}
D_g(t)=\lim_{\varepsilon\rightarrow 0}\frac{1}{H^1((t-\varepsilon,t+\varepsilon])}
\int_{t-\varepsilon}^{t+\varepsilon}D_g(s)dH^1(s).
\end{equation}

Take $ t\in(c,d) $, $ t\neq\pi $, such that this holds, and set $ \lambda=g(t) $. By the
Inclusion-Exclusion Principle, remembering that $ V $ is null on constant functions and using the
rotational invariance, for every $ m\in\mathbb{N} $ we get
$$
\begin{aligned}
V(S_{\lambda,\gamma,m})&=V\left(\frac{1}{m}\sum_{j=1}^m\psi_{\lambda m,\gamma}(m
\cdot-2(j-1)\pi)\chi_{\left[\frac{2(j-1)\pi}{m},\frac{2j\pi}{m}\right)}\right)\\
&=V\left(\bigvee_{j=1}^m\left(\frac{1}{m}\psi_{\lambda m,\gamma}(m\cdot-2(j-1)\pi)\chi_{\left[
\frac{2(j-1)\pi}{m},\frac{2j\pi}{m}\right)}\right.\right.\\
&\quad\quad\quad+\left.\left.\left(\lambda-\frac{\gamma\pi}{2m}\right)\chi_{\left[0,
\frac{2(j-1)\pi}{m}\right)\cup\left[\frac{2j\pi}{m},2\pi\right)}\right)\right)\\
&=\sum_{j=1}^m V\left(\frac{1}{m}\psi_{\lambda m,\gamma}(m\cdot-2(j-1)\pi)\chi_{\left[
\frac{2(j-1)\pi}{m},\frac{2j\pi}{m}\right)}\right.\\
&\quad\quad\quad\left.+\left(\lambda-\frac{\gamma\pi}{2m}\right)\chi_{\left[0,
\frac{2(j-1)\pi}{m}\right)\cup\left[\frac{2j\pi}{m},2\pi\right)}\right)\\
&=mV\left(\frac{1}{m}\psi_{\lambda m,\gamma}(m\cdot)\chi_{\left[0,\frac{2\pi}{m}\right)}+
\left(\lambda-\frac{\gamma\pi}{2m}\right)\chi_{\left[\frac{2\pi}{m},2\pi\right)}\right)\\
&=mV_{\lambda-\frac{\gamma\pi}{2m}}\left(h_{\gamma m,\frac{\pi}{m},0,0}\right).
\end{aligned}
$$

If $ t<\pi $, applying Lemma \ref{l:flatszero} to $ V_{\lambda-\frac{\gamma\pi}{2m}} $, which is still a $ \tau$-continuous
and rotation invariant valuation which is null on constant functions, we have that
$$ V(S_{\lambda,\gamma,m})=mV_{\lambda-\frac{\gamma\pi}{2m}}\left(h_{\gamma m,\frac{\pi}{m},2\pi-2t-\frac{\pi}{m},t-\frac{\pi}{2m}}\right), $$
if $ m $ is big enough so that $ t+\frac{\pi}{2m}<\pi $. We proceed similarly in the case $ t>\pi $.

From the definition of $ \nu_g $, we find
\begin{eqnarray*}
V(S_{\lambda,\gamma,m})&=&m\nu_g\left(\left(t-\frac{\pi}{2m},t+\frac{\pi}{2m}\right]\right)\\
&=&\frac{1}{C_0 H^1\left(\left(t-\frac{2\pi}{m},t+\frac{2\pi}{m}\right]\right)}\int_{t-\frac{2
\pi}{m}}^{t+\frac{2\pi}{m}}D_g(s)dH^1(s),
\end{eqnarray*}
since
$$ H^1\left(\left(t-\frac{2\pi}{m},t+\frac{2\pi}{m}\right]\right)=\frac{1}{4\pi C_0}
\mathscr{L}^1\left(\left(t-\frac{2\pi}{m},t+\frac{2\pi}{m}\right]\right)=\frac{1}{4\pi C_0}\cdot
\frac{4\pi}{m}=\frac{1}{mC_0}. $$
Thus, taking the limit superior for $ m\rightarrow\infty $ and using \eqref{Lebesgue-Besicovitch} we obtain
\eqref{Ksaw} for a.e. $ t\in(c,d) $. An analogous argument can be used when $ g'(t)=-\,\gamma $.
\end{proof}

This allows us to prove the Borel measurability of $ K(\cdot,\gamma) $, for every $ \gamma\in
\mathbb{R}_+ $.

\begin{remark}\label{Borel measurability}
Fix $ \gamma\in\mathbb{R}_+ $. For every $ m\in\mathbb{Z} $, $ \psi_{\gamma\pi m,\gamma}(0)=
\frac{\gamma\pi(2m-1)}{2} $ and $ \psi_{\gamma\pi m,\gamma}(\pi)=\frac{\gamma\pi(2m+1)}{2} $.
If $ \lambda\in\left[\frac{\gamma\pi(2m-1)}{2},\frac{\gamma\pi
(2m+1)}{2}\right) $, then there exists $ t\in[0,\pi] $ such that $ \psi_{\gamma\pi m,\gamma}(t)=
\lambda $. From Lemma \ref{kernel for piecewise linear}, we can thus write
$$ K(\lambda,\gamma)=\sum_{m\in\mathbb{Z}}D_{\psi_{\gamma\pi m,\gamma}}\left(
\psi_{\gamma\pi m,\gamma}^{-1}(\lambda)\right)\chi_{\left[\frac{\gamma\pi(2m-1)}{2},
\frac{\gamma\pi(2m+1)}{2}\right)}(\lambda), $$
for every $ \lambda\in\mathbb{R} $. As a consequence, we have that for every $ \gamma\in
\mathbb{R}_+ $, $ K(\cdot,\gamma) $ is a Borel function on $ \mathbb{R} $ (and it is in fact
integrable on every bounded interval).
\end{remark}

\section{Integral representation and final remarks}\label{section:integral representation}

In this section we complete the proof of our main result.

\main*

\begin{proof}
Let $ V:\lip\longrightarrow\mathbb{R} $ be a $\tau$-continuous and rotation invariant valuation. Decomposing V into its flat and slope components and reasoning as in the beginning of Section
\ref{section:controlmeasure}, we can assume that $V(\lambda)=0$ for every $\lambda\in\mathbb R$. Define $K:\mathbb R\times \mathbb R_+\longrightarrow\mathbb{R}$ as in \eqref{defK}; by Remark
\ref{Borel measurability}, $ K(\cdot,\gamma) $ is a Borel function for every fixed $ \gamma\in\mathbb{R}_+ $.

Fix a piecewise linear function $g\in \mathcal L(S^1)$. Let $\varphi:[0,2\pi]\longrightarrow[0,2\pi]$ be the reflection with respect to $\pi$, that is, $\varphi(t)=2\pi-t$, $ t\in[0,2\pi] $. Using rotation invariance and the valuation property, we have that
$$
V(g)=\frac12\Big(V(g)+V(g\circ\varphi)\Big)=\frac12\Big(V(g\vee (g\circ \varphi))+V(g\wedge (g\circ \varphi))\Big).
$$
Note that both $g\vee (g\circ \varphi)$ and $g\wedge (g\circ \varphi)$ are piecewise linear and symmetric with respect to $\pi$. Thus, without loss of generality, we can assume $g$ to be symmetric with respect to $\pi$.

On the one hand, by definition of $\nu_g$ (see Definition \ref{def:nug} and \eqref{eq:gab}) and symmetry of $g$, we have that
$$
\nu_g([0,2\pi])=\frac12\Big(\nu_{g_1}([0,\pi])+\nu_{g_2}([\pi,2\pi])\Big)=V(g_{0,\pi})=V(g).
$$

On the other hand, we can take a partition $(t_i)_{i=0}^m\subset[0,2\pi]$ such that $t_0=0$, $t_m=2\pi$ and the derivative satisfies $|g'(t)|=\gamma_i$, for $t\in(t_{i-1},t_i]$ and $ i=1,\ldots,m $. If $D_g$ denotes the Radon-Nikodym derivative of $\nu_g$ with respect to the Hausdorff measure $H^1$, then by Lemma \ref{kernel for piecewise linear} we have
\begin{align*}
\nu_g([0,2\pi])&=\int_0^{2\pi} D_g(t) dH^1(t)=\sum_{i=1}^m\int_{t_{i-1}}^{t_i} D_g(t)dH^1(t)\\
&=\sum_{i=1}^m\int_{t_{i-1}}^{t_i} K(g(t),|g'(t)|)dH^1(t)=\int_0^{2\pi}K(g(t),|g'(t)|)dH^1(t).
\end{align*}

Therefore, for a piecewise linear function $g\in\mathcal L(S^1)$ we have
$$
V(g)=\int_0^{2\pi}K(g(t),|g'(t)|)dH^1(t).
$$

In particular, for every $f\in\lip$ and any sequence $(f_i)_{i\in\mathbb N}\subset \mathcal L(S^1)$ such that $f_i\overset{\tau}\rightarrow f$ (which exist by the $\tau$-density of $\mathcal L(S^1)$ in $\lip$),
by $\tau$-continuity of $V$ it follows that
$$
V(f)=\lim_{i\rightarrow\infty} V(f_i)= \lim_{i\rightarrow \infty} \int_0^{2\pi}K(f_i(t),|f_i'(t)|)dH^1(t).
$$
\end{proof}

The first integral formula given in Theorem \ref{main result} works on the dense set $\mathcal L(S^1)$. The possibility of extending this formula to the whole space $\lip$ is related to the properties of the pseudo kernel $K:\mathbb R\times \mathbb R_+\longrightarrow \mathbb R$. Although it is conceivable for $K$ to satisfy a strong Carath\'eodory condition (as in the integral representation for valuations on $C(S^{n-1})$ given in \cite{TV:18}), at the moment we have not been able to prove that  $K$ is  a measurable function or, at least, that  $K(f(\cdot),|f'(\cdot)|)$ is integrable, or measurable, on $S^1$ for arbitrary $f\in\lip$.

Assuming stronger continuity conditions on the valuation, we can show that the associated pseudo kernel does have good continuity properties. For instance, let us say that a valuation $ V:\lip\longrightarrow\mathbb{R} $ is \textit{uniformly $ \tau$-continuous} if for every $ \varepsilon>0$ and $M>0$ there exists $\delta>0$ such that
$$
\vert V(f_1)-V(f_2)\vert<\varepsilon,
$$
whenever $\norm{f_1},\,\norm{f_2}\leq M$ and $d_\tau(f_1,f_2)<\delta$ (see \eqref{eq:dtau} for the definition of the metric $d_\tau$). In this case, it is easy to see that the above arguments can be streamlined and yield the following characterization result.

\begin{corollary}\label{uniform continuity}
Let $ V:\lip\longrightarrow\mathbb{R} $. Then $ V $ is a rotation invariant and uniformly $ \tau$-continuous valuation if and only if there exists a continuous function $ K:
\mathbb{R}\times\mathbb{R}_+\longrightarrow\mathbb{R} $ such that
$$ V(f)=\int_0^{2\pi}K(f(t),\vert f'(t)\vert)dH^1(t), $$
for every $ f\in\lip $.
\end{corollary}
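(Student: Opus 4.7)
The plan is to prove the two implications separately, with sufficiency amounting to a standard verification and necessity being the substantive direction. I would deduce the latter from Theorem~\ref{main result} by upgrading the Borel pseudo kernel to a continuous one, then extending the integral formula from $\mathcal{L}(S^1)$ to all of $\lip$ by density and dominated convergence.

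For sufficiency, given continuous $K$, I set $V_K(f)=\int_0^{2\pi}K(f(t),|f'(t)|)dH^1(t)$ and verify three properties. The valuation property reduces to a pointwise identity for $K(f,|f'|)+K(g,|g'|)$ versus $K(f\vee g,|(f\vee g)'|)+K(f\wedge g,|(f\wedge g)'|)$: on $\{f\neq g\}$ the identity is automatic from the definitions of pointwise max and min, and on $\{f=g\}$ Lemma~\ref{gradient0} applied to $f-g$ gives $f'=g'$ a.e., so both sides agree a.e. Rotation invariance reduces to change of variables via $|(f\circ\varphi)'|=|f'|\circ\varphi$ together with rotation invariance of $H^1$. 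For uniform $\tau$-continuity on $\LipM$, I fix $\varepsilon>0$, invoke uniform continuity of $K$ on $[-M,M]\times[0,M]$ with some modulus $\eta$ and bound $B$, and split the integral over $E=\{t:|f_1'(t)-f_2'(t)|\geq\eta\}$ (whose $H^1$-measure is $\leq\delta/\eta$ by Markov's inequality) and its complement. Choosing $\delta=\min(\eta,\varepsilon\eta/(4B))$ would yield $|V_K(f_1)-V_K(f_2)|<\varepsilon$ whenever $d_\tau(f_1,f_2)<\delta$.

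For necessity, suppose $V$ is rotation invariant and uniformly $\tau$-continuous. Since uniform $\tau$-continuity implies $\tau$-continuity, Theorem~\ref{main result} furnishes a kernel $K$, Borel in $\lambda$, with $V(g)=\int K(g,|g'|)dH^1$ for every $g\in\mathcal{L}(S^1)$. Writing $V=V_{flat}+V_{slope}$ as in Section~\ref{section:controlmeasure} and noting that $K_{flat}(\lambda,\gamma)=V(\lambda\uno)$ is jointly continuous (since $\lambda\mapsto V(\lambda\uno)$ is continuous by $\tau$-continuity and independent of $\gamma$), the task reduces to showing that the pseudo kernel $K_{slope}(\lambda,\gamma)=C_0\limsup_m V(S_{\lambda,\gamma,m})$ from~\eqref{defK} is continuous. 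My key observation would be that the saw functions satisfy
$$
\|S_{\lambda,\gamma,m}-S_{\lambda',\gamma',m}\|_\infty\leq|\lambda-\lambda'|+\tfrac{\pi}{2}|\gamma-\gamma'|,\qquad \int_0^{2\pi}|S_{\lambda,\gamma,m}'-S_{\lambda',\gamma',m}'|dH^1\leq|\gamma-\gamma'|,
$$
uniformly in $m$ (the second estimate because the linear pieces and their signs coincide, only the slope magnitude differing), while $\norm{S_{\lambda,\gamma,m}}\leq|\lambda|+\gamma\pi/2$. Uniform $\tau$-continuity of $V$ then yields $|V(S_{\lambda,\gamma,m})-V(S_{\lambda',\gamma',m})|<\varepsilon$ uniformly in $m$ for $(\lambda',\gamma')$ close to $(\lambda,\gamma)$; passing to $\limsup$ transfers this closeness to $K_{slope}$, giving (uniform) continuity on bounded rectangles and hence joint continuity.

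Once continuity of $K$ is in hand, I would extend the integral representation from $\mathcal{L}(S^1)$ to $\lip$: for $f\in\lip$ pick a $\tau$-approximation $f_i\in\mathcal{L}(S^1)$ with $\norm{f_i}$ uniformly bounded; then $V(f_i)\to V(f)$ by $\tau$-continuity, while continuity and boundedness of $K$ on the relevant rectangle make dominated convergence applicable, identifying the limit as $\int_0^{2\pi}K(f,|f'|)dH^1$. The main obstacle I expect is precisely the continuity of $K_{slope}$: this is the step where the strengthening from $\tau$-continuity to uniform $\tau$-continuity becomes essential, since it promotes the pointwise $d_\tau$-closeness of the saw functions to a uniform-in-$m$ closeness of their valuations, a property that is then preserved under $\limsup$.
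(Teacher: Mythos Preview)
Your proposal is correct and aligns with what the paper intends. The paper does not give a detailed proof of this corollary; it merely remarks that ``it is easy to see that the above arguments can be streamlined'' under the uniform $\tau$-continuity assumption. Your argument fills in precisely those details: the sufficiency direction is the routine verification (already alluded to at~\eqref{eq:V_K}), and for necessity you correctly identify that the crucial upgrade is the continuity of the pseudo kernel, obtained from the uniform-in-$m$ estimate $d_\tau(S_{\lambda,\gamma,m},S_{\lambda',\gamma',m})\leq |\lambda-\lambda'|+\big(1+\tfrac{\pi}{2}\big)|\gamma-\gamma'|$ together with uniform $\tau$-continuity, followed by the standard density and dominated convergence argument. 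Two minor points worth tidying: in your definition of $K_{slope}$ you should write $V_{slope}(S_{\lambda,\gamma,m})$ rather than $V(S_{\lambda,\gamma,m})$ (and note that $V_{slope}$ inherits uniform $\tau$-continuity from $V$, since $V_{flat}$ is easily seen to be uniformly $\tau$-continuous); and your $\limsup$ step uses the elementary fact that $|a_m-b_m|<\varepsilon$ for all $m$ implies $|\limsup a_m-\limsup b_m|\leq\varepsilon$, which is worth stating explicitly.
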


Although Corollary \ref{uniform continuity} provides an integral representation on the whole space $\lip$, we should point out that the hypothesis of uniform continuity is very strong: continuous valuations are not necessarily uniformly continuous, as the following simple example shows. Consider the valuation defined by
$$
V(f)=\int_0^{2\pi} |f'(t)|\chi_{[1,\infty)}(f(t)) dH^1(t),
$$
for $f\in\lip$, where $\chi_{[1,\infty)}$ denotes the characteristic function of the set $[1,\infty)$. Clearly, this defines a rotation invariant valuation on $\lip$, and using the Dominated Convergence Theorem and Lemma \ref{gradient0}, it is easy to see that $V$ is $\tau$-continuous. However, if we consider, for $m\in \mathbb N$, the functions
$$
f_m=S_{1+\frac{\pi}{2m},1,m}\quad \text{ and }\quad g_m=S_{1-\frac{\pi}{2m}, 1, m}
$$
(we are using the notation of \eqref{eq:saw}), then we have that for every $m\in\mathbb N$, $V(f_m)=H^1([0,2\pi])$ and $V(g_m)=0$, but
$$
d_\tau(f_m, g_m)\rightarrow 0.
$$
Hence, $V$ is not uniformly $\tau$-continuous.

Note that the associated pseudo kernel, for $\lambda\in\mathbb R$ and $\gamma \in \mathbb R_+$, is given by
$$
K(\lambda, \gamma)=\chi_{[1,\infty)}(\lambda) \cdot \gamma=
\left\{
\begin{array}{ccc}
\gamma  &   & \text{ if }\lambda\geq1,  \\
  &   &   \\
 0 &   & \text{ if }\lambda<1.
\end{array}
\right.
$$
Clearly, the function $K(\cdot,\gamma)$ is not continuous at $\lambda=1$, for any $\gamma\in\mathbb{R}_+$.

\section*{Acknowledgements}
A. Colesanti is supported by the G.N.A.M.P.A. research project {\em Problemi di Analisi Geometrica Collegati alle Equazioni alle Derivate Parziali, al Calcolo delle Variazioni, agli Insiemi e alle Funzioni Convesse}.
D. Pagnini is supported by the G.N.A.M.P.A. group. 
P. Tradacete is supported by Agencia Estatal de Investigaci\'on (AEI) and Fondo Europeo de Desarrollo Regional (FEDER) through grants MTM2016-76808-P (AEI/FEDER, UE) and MTM2016-75196-P (AEI/FEDER, UE), as well as Grupo UCM 910346. 
I. Villanueva is supported by MINECO (Spain) Grant MTM2017-88385-P, QUITEMAD+-CM (S2013/ICE-2801).
P. Tradacete and I. Villanueva also acknowledge financial support from the Spanish Ministry of Science and Innovation, through the ``Severo Ochoa Programme for Centres of Excellence in R\&D'' (SEV-2015-0554) and from the Spanish National Research Council, through the ``Ayuda extraordinaria a Centros de Excelencia Severo Ochoa'' (20205CEX001).

\end{document}